\documentclass[11pt]{article}
\usepackage{mathtools,amsmath,amsthm,amssymb,mathrsfs,amsfonts}
\usepackage{epsfig}
\usepackage{leftidx}
\usepackage{color,epstopdf}
\usepackage{cite}
\usepackage{graphicx,wrapfig,tabularx}
\usepackage{flafter}
\usepackage{fancyhdr}
\usepackage{stmaryrd}
\usepackage{graphicx,subfigure}
\usepackage{multicol,multirow,booktabs}
\usepackage{booktabs,threeparttable}
\usepackage[center]{caption2}
\usepackage{epstopdf}
\usepackage{multirow}
\usepackage{enumerate}
\usepackage{enumitem}
\usepackage{algpseudocode,algorithm,algorithmicx}

\newtheorem{theorem}{Theorem}[section]
\newtheorem{lemma}{Lemma}[section]

\newtheorem{example}{Example}

\def\rmk{{\mathrm{k}}}
\def\ck{{\scriptstyle{K}}}

\newcommand{\myvec}[1]{\mathbf{#1}}

\newcommand{\zd}{\,\mathrm{d}}

\newcommand{\abs}[1]{\left|#1\right|}
\newcommand{\absb}[1]{\big|#1\big|}

\newcommand{\bra}[1]{\left(#1\right)}
\newcommand{\brab}[1]{\big(#1\big)}
\newcommand{\braB}[1]{\Big(#1\Big)}

\newcommand{\kbra}[1]{\left[#1\right]}
\newcommand{\kbrab}[1]{\big[#1\big]}

\newcommand{\myinner}[1]{\left\langle#1\right\rangle}

\newcommand{\myinnerb}[1]{\big\langle#1\big\rangle}

\newcommand{\mynorm}[1]{\left\|#1\right\|}
\newcommand{\mynormb}[1]{\big\|#1\big\|}

\def\lan#1{\textcolor{blue}{#1}}

\title{Stability analysis of consistent splitting implicit-explicit multistep methods up to \lan{ninth-order} accuracy for incompressible flows}
\author{Yuanyuan Kang\thanks{School of Mathematics, Nanjing University of Aeronautics and Astronautics, Nanjing 211106, China. Email: kangyy0101@163.com.}
			\and
	Hong-lin Liao\thanks{ORCID 0000-0003-0777-6832. School of Mathematics,
    Nanjing University of Aeronautics and Astronautics,
    Nanjing 211106, China; Key Laboratory of Mathematical Modeling
    and High Performance Computing of Air Vehicles (NUAA), MIIT, Nanjing 211106, China. 
    Email: liaohl@nuaa.edu.cn.    This author's work is supported in part by National Natural Science Foundation of China under grant 12471383, National Natural Science Foundation of Jiangsu Province under grant BK20252027,
    and Ministry of Education Key Laboratory of NSLSCS under grant 202501.}
\and Guidong Liu\thanks{School of Mathematics, Nanjing Audit University, Nanjing 211815, China. 
	Email: liugd@nau.edu.cn}
}

\begin{document}
  
\maketitle

\begin{abstract}
This work presents a concise, unified stability theory of high-order decoupled \lan{implicit-explicit linear multistep (IELM)} methods based on the well-known consistent splitting technique for the incompressible Navier-Stokes equation (INSE).
With the help of the recent semi-generating function approach and the global discrete energy analysis, one can establish the unconditional stability of a consistent splitting IELM method with respect to the $\ell^{\infty}(H^1)\cap \ell^{2}(H^2)$ norm if the associated implicit-explicit controllability intensity is larger than $\sqrt{2}/2$, a constant determined by the Stokes pressure estimate. It is shown that the $\beta$-parameterized GBDF-$\rmk$ ($2\le \rmk\le5$) schemes and $\gamma$-parameterized SIELM-$\rmk$ ($2\le \rmk\le9$) schemes can fulfill this requirement of implicit-explicit controllability intensity by choosing proper parameters so that they can theoretically maintain the unconditional stability of the associated consistent splitting IELM methods. Numerical experiments are also included to support our theory.
	\\[1ex]   
	\textsc{Keywords:} incompressible Navier-Stokes, implicit-explicit multistep methods, semi-generating function approach, implicit-explicit controllability intensity, unconditional stability
	\\[1ex]
	\emph{AMS subject classifications}: 65M12, 65M15,  76D05  
 \end{abstract}


\section{Introduction}\label{sec: introduction}
\setcounter{equation}{0}

Let $\Omega$ be a bounded  domain in $\mathbb{R}^d$ $(d=2,3)$ with smooth boundary $\partial\Omega$ and a finite time $T$. We consider the incompressible Navier-Stokes equations (INSE)
in primitive variables,
\begin{align}\partial_t\myvec{u}-\nu\Delta\myvec{u}+\myvec{u}\cdot\nabla\myvec{u}+\nabla p&\,=\myvec{f}\quad\text{for $\myvec{x}\in\Omega$, $0<t\le T$,}\label{cont: INSE-momentum}\\
\nabla\cdot\myvec{u}&\,=0\quad\text{for $\myvec{x}\in\Omega$,}\label{cont: INSE-incompressible}\end{align}
subjected to  $\myvec{u}=\myvec{u}^0$ at $t=0$ and the no-slip boundary condition $\myvec{u}=0$ on $\partial\Omega$.
Here $\myvec{u}$ is the velocity of fluid and $p$ is its pressure,
$\nu:=1/\mathrm{Re}$ is the coefficient of the kinematic viscosity  with the
Reynolds number $\mathrm{Re}\ge 1$, and $\myvec{f}$ is a given body force.

We introduce the inner product $\myinner{\myvec{u},\myvec{v}}_{\Omega}:=\int_{\Omega}\myvec{u}\cdot\myvec{v}\zd\myvec{x}$ for the functions $\myvec{u},\myvec{v}$ in $\Omega$, and the associated $L^2$ norm $\mynorm{\myvec{u}}_{\Omega}:=\sqrt{\myinner{\myvec{u},\myvec{u}}_{\Omega}}$. The subscript on the inner product and norm will be dropped when the domain of integration is understood in context.
As an efficient decoupled approach that does not suffer from the splitting error and can achieve full-order accuracy in strong norms, the so-called gauge method
\cite{E-Liu:2003,NochettoPyo:2005} or  well-known consistent splitting technique \cite{GuermondShen:2003,JohnstonLiu:2004,LiuLiuPego:2007,LiuLiuPego:2010,ShenYang:2007} computes the pressure $p$ with a weak form Poisson equation by testing the momentum equation \eqref{cont: INSE-momentum} against gradients. That is, by taking the $L^2$ inner product of \eqref{cont: INSE-momentum} with $\nabla \phi$ and using the incompressible condition
\eqref{cont: INSE-incompressible}, we find that
\begin{align}\label{cont: INSE-pressure Poisson}
	\myinner{\nabla p,\nabla \phi}=\myinner{\myvec{f}+\nu\Delta\myvec{u}-\myvec{u}\cdot\nabla\myvec{u},\nabla \phi}\quad\text{for $\forall\phi\in H^1(\Omega)$.}
\end{align}
This weak form Poisson equation can split the computation of the velocity $\myvec{u}$ and the pressure  $p$ of INSE model \eqref{cont: INSE-momentum}-\eqref{cont: INSE-incompressible} into two consecutive steps. Once the velocity $\myvec{u}$ is known, one can compute the current pressure  $p$ via the Poisson equation \eqref{cont: INSE-pressure Poisson} and update the velocity from the momentum equation \eqref{cont: INSE-momentum} with the no-slip boundary condition by using the previous values of the pressure.  In this way, one can construct fully decoupled time approximations of INSE model \eqref{cont: INSE-momentum}-\eqref{cont: INSE-incompressible}  by the following unconstrained INSE system
\begin{align}
	\myinner{\nabla p,\nabla \phi}&\,=\myinner{\nu\Delta\myvec{u}-\nu\nabla\nabla\cdot\myvec{u}
		-\myvec{g},\nabla \phi}\quad\text{for $\forall\phi\in H^1(\Omega)$,}\label{cont: unconstrained INSE-pressure Poisson}\\
	\partial_t\myvec{u}&\,=\nu\Delta\myvec{u}-\nabla p-\myvec{g}\quad\text{for $\myvec{x}\in\Omega$ with $\myvec{u}=0$ on $\partial\Omega$,}\label{cont: unconstrained INSE-momentum}
\end{align}
subjected to the initial data $\myvec{u}=\myvec{u}^0$ at $t=0$, where the notation $\myvec{g}:=\myvec{u}\cdot\nabla\myvec{u}-\myvec{f}$. Here the grad-div term $\nu\nabla\nabla\cdot\myvec{u}$ in the pressure Poisson equation \eqref{cont: unconstrained INSE-pressure Poisson} is added to improve the stability of numerical approximations \cite{JohnstonLiu:2004,LiuLiuPego:2007,LiuLiuPego:2010}.
As proved in \cite[Theorem 5.1]{LiuLiuPego:2007}, the divergence $w:=\nabla\cdot\myvec{u}$ in the unconstrained INSE system \eqref{cont: unconstrained INSE-pressure Poisson}-\eqref{cont: unconstrained INSE-momentum} is a classical solution of the heat equation $\partial_tw=\nu\Delta w$  with no-flux boundary conditions, $\myvec{n}\cdot\nabla w=0$ on $\partial\Omega$. This property is practically useful in developing stable numerical algorithms since any residues of the numerical divergence would be over controlled especially near the no-slip boundary.
Alternatively, due to the identity $\Delta\myvec{u}=\nabla\nabla\cdot\myvec{u}-\nabla\times\nabla\times\myvec{u}$, the modification in \eqref{cont: unconstrained INSE-pressure Poisson} can be also viewed as the replacement of $\Delta\myvec{u}$ by $-\nabla\times\nabla\times\myvec{u}$ to improve the accuracy of fractional-step splitting procedures \cite{GuermondShen:2003,ShenYang:2007}.

Consider the time mesh $0=t_0<t_1<\cdots<t_N=T$ with the time-step size $\tau =t_j-t_{j-1}$ for $j\ge1$,
and denote the difference quotient $\partial_{\tau}v^j:=(v^j-v^{j-1})/\tau$ for any mesh function $v^j$. Let $\myvec{u}^k$ and $p^k$ be the numerical approximation of $\myvec{U}^k:=\myvec{u}(\myvec{x},t_k)$ and $P^k:=p(\myvec{x},t_k)$, respectively, at the discrete time level $t_k$ for $0\le k\le N$. The classical first-order consistent splitting scheme \cite{LiuLiuPego:2007,LiuLiuPego:2010}
\begin{align}
	\myinner{\nabla p^{n},\nabla \phi}&\,=\myinner{\nu\Delta\myvec{u}^{n}-\nu\nabla\nabla\cdot\myvec{u}^{n}
		-\myvec{g}^{n},\nabla \phi}\quad\text{for  $\forall\phi\in H^1(\Omega)$ and $n\ge0$,}\label{scheme: IERK1-pressure Poisson}\\
	\partial_{\tau}\myvec{u}^{n}&\,=\nu\Delta\myvec{u}^{n}-\nabla p^{n-1}-\myvec{g}^{n-1}
	\quad\text{for $\myvec{x}\in\Omega$ and $n\ge1$,}\label{scheme: IERK1-momentum}
\end{align}
subjected to the boundary condition $\myvec{u}^n=0$ on $\partial\Omega$. The stability result of the semi-discrete first-order consistent splitting scheme for the time dependent Stokes equations with no-slip boundary conditions
(resp., with periodic-nonperiodic boundary conditions in a periodic channel) was established in \cite{GuermondShen:2003}, and the local-in-time stability and error estimates for INSE were proven in a series of work by Liu, Liu and Pego\cite{LiuLiuPego:2007,LiuLiuPego:2010}. By using the second order backward differentiation formula $D_2v^n:=\frac{3}2	\partial_{\tau}v^{n}-\frac{1}2\partial_{\tau}v^{n-1}$ and the second-order explicit extrapolation $\hat{v}^{n}:=2v^{n-1}-v^{n-2}$, a second-order consistent splitting scheme \cite{GuermondShen:2003,JohnstonLiu:2004} was formulated as follows,
\begin{align*}
	\myinner{\nabla p^{n},\nabla \phi}&\,=\myinner{\nu\Delta\myvec{u}^{n}-\nu\nabla\nabla\cdot\myvec{u}^{n}
		-\myvec{g}^{n},\nabla \phi}\quad\text{for  $\forall\phi\in H^1(\Omega)$ and $n\ge0$,}\\
	D_{2}\myvec{u}^{n}&\,=\nu\Delta\myvec{u}^{n}
	-\nabla \hat{p}^{n}-\hat{\myvec{g}}^{n}
	\quad\text{for $\myvec{x}\in\Omega$  with $\myvec{u}^n=0$ on $\partial\Omega$  and $n\ge2$.}
\end{align*}
However, whether the second-order consistent splitting scheme is stable remains open since its inception.
Actually, as pointed out by \cite{HuangShen:2023,HuangShen:2025mcom}, it has been a long standing open question
on how to construct unconditionally stable second- or higher-order decoupled scheme with
a rigorous stability and error analysis.

The situation was essentially changed after the series of works  \cite{HuangShen:2023,HuangShen:2024,HuangShen:2025mcom} by Huang and Shen, in which a class of $\rmk$-th ($2\le\rmk\le 5$) order generalized implicit-explicit backward differentiation formulas (GBDF-$\rmk$) with a free parameter $\beta$ $(\beta\ge1)$ was designed and  applied to develop some  high-order unconditionally stable schemes.
They proved in \cite[Theorem 2]{HuangShen:2024} that the GBDF2 and GBDF3 methods for  $\beta>1$, and the GBDF4 method for  $\beta\ge2$ are stable for linear parabolic problems; while \cite[Theorem 3]{HuangShen:2024} establishes the corresponding convergence for a class of nonlinear parabolic problems by assuming that the nonlinear term is locally Lipschitz continuous.
By choosing three fixed parameters $\beta_{\rmk}=3,6,9$ corresponding to the order indexes $\rmk=2,3,4$, respectively, Huang and Shen \cite{HuangShen:2025mcom} established the unconditional stability and convergence of GBDF-$\rmk$ consistent splitting schemes in the $\ell^{\infty}(H^1)\cap \ell^{2}(H^2)$ norm  for the INSE. Note that, the theoretical analysis in \cite{HuangShen:2024,HuangShen:2025mcom} always takes advantage of the Dahlquist's G-stability theory,
a new convolution-type multiplier and certain  decomposition  of implicit part, cf. \cite[(3.5)]{HuangShen:2024} and \cite[(3.16)]{HuangShen:2025mcom}.

The significant developments in \cite{HuangShen:2023,HuangShen:2024,HuangShen:2025mcom} raise  some new questions: \textsl{why can the  GBDF-$\rmk$ consistent splitting methods achieve the unconditional stability for the INSE, or why choose the free parameters $\beta_{\rmk}=3,6,9$ for the order indexes $\rmk=2,3,4$, respectively? What kind of implicit-explicit linear multistep (IELM) methods can also make the associated consistent splitting algorithms unconditionally stable for the INSE?}
This article answers these questions by presenting a concise, unified theory on the unconditional stability and convergence of general IELM methods for the unconstrained INSE system \eqref{cont: unconstrained INSE-pressure Poisson}-\eqref{cont: unconstrained INSE-momentum}:
\begin{itemize}
	\item With the help of the Stokes pressure estimate from \cite[Theorem 1.2]{LiuLiuPego:2007}, we apply the recent semi-generating function approach \cite{LiaoQuanTangZhou:IMES} and the global discrete energy analysis to establish the unconditional stability of consistent splitting IELM methods with respect to the $\ell^{\infty}(H^1)\cap \ell^{2}(H^2)$ norm if the associated implicit-explicit controllability intensity $\mathfrak{I}_{\mathrm{IE}}^{(\rmk)}>\sqrt{2}/2$.
	\item By choosing proper large parameters, the recently proposed $\beta$-parameterized GBDF-$\rmk$ (for $2\le \rmk\le5$) schemes in \cite{HuangShen:2024,HuangShen:2025mcom} and $\gamma$-parameterized SIELM-$\rmk$ (for $2\le \rmk\le9$) schemes in \cite{LiaoQuanTangZhou:IMES} can fulfill  $\mathfrak{I}_{\mathrm{IE}}^{(\rmk)}>\sqrt{2}/2$ so that they can theoretically achieve the unconditional stability of the associated consistent splitting IELM methods. They present a positive answer to the first open issue proposed by Huang and Shen \cite{HuangShen:2025mcom}.
	\item Extensive experiments are presented to examine the stability and convergence of GBDF-$\rmk$ and  SIELM-$\rmk$ methods for different parameters.  They suggest that our theoretical requirement  $\mathfrak{I}_{\mathrm{IE}}^{(\rmk)}>\sqrt{2}/2$ is sufficient but always not sharp.
\end{itemize}

Note that, our theory would be theoretically concise since the stability and convergence analysis do not involve any Nevanlinna-Odeh type multipliers for high-order implicit-explicit BDF methods \cite{Akrivis:2013,Akrivis:2015,AkrivisChenYuZhou:2021,AkrivisChenYu:2024}  or any delicate decompositions of implicit part for the GBDF-$\rmk$ methods \cite{HuangShen:2024,HuangShen:2025mcom}. 

We consider the following $\rmk$-step consistent splitting IELM method to integrate the INSE from $t_{n-1}$ ($n\ge1$) to the next grid point $t_n$:
\begin{align}	
	\myinner{\nabla p^{n},\nabla \phi}=&\,\myinner{
		\nu\Delta\myvec{u}^{n}-\nu\nabla\nabla\cdot\myvec{u}^{n}-\myvec{g}^{n},\nabla \phi}\quad\text{for $\forall\phi\in H^1(\Omega)$, $n\ge0$,}\label{scheme: general multistep-pressure Poisson}\\
	\sum_{j=0}^{\rmk-1}a_{j}^{(\rmk)}\partial_{\tau}\myvec{u}^{n-j}
	=&\,\nu\sum_{j=0}^{\rmk-1}b_{j}^{(\rmk)}\Delta\myvec{u}^{n-j}
	-\sum_{j=0}^{\rmk-1}c_{j}^{(\rmk)}(\nabla p^{n-j-1}+\myvec{g}^{n-j-1})	\label{scheme: general multistep-momentum}
\end{align}
for $\myvec{x}\in\Omega$ with $\myvec{u}^n=0$ on $\partial\Omega$ and $n\ge\rmk$, where $a_{j}^{(\rmk)}$, $b_{j}^{(\rmk)}$ and $c_{j}^{(\rmk)}$ are the coefficients with $a_{0}^{(\rmk)},b_{0}^{(\rmk)}, c_{0}^{(\rmk)}>0$. One can use a triad $\brab{\vec{a}^{(\rmk)},\vec{b}^{(\rmk)},\vec{c}^{(\rmk)}}$ to represent
the IELM method	 with vectors
$\vec{a}^{(\rmk)}=\brab{a_{0}^{(\rmk)},a_{1}^{(\rmk)},\cdots,a_{\rmk-1}^{(\rmk)}}$,
$\vec{b}^{(\rmk)}=\brab{b_{0}^{(\rmk)},b_{1}^{(\rmk)},\cdots,b_{\rmk-1}^{(\rmk)}}$
and $\vec{c}^{(\rmk)}=\brab{c_{0}^{(\rmk)},c_{1}^{(\rmk)},\cdots,c_{\rmk-1}^{(\rmk)}}$.  Here we assume that the starting solutions $\myvec{u}^{j}$ for $1\le j\le \rmk-1$ are available and accurate enough to meet with the global time accuracy of the IELM method \eqref{scheme: general multistep-momentum}. \lan{The IELM scheme  can be characterized by the following three polynomials
\begin{align*}
	\varrho_a(\zeta):=\sum_{j=0}^{\rmk-1}a_{j}^{(\rmk)}\zeta^{\rmk-1-j},\quad
	\varrho_b(\zeta):=\sum_{j=0}^{\rmk}b_{j}^{(\rmk)}\zeta^{\rmk-j},\quad
	\varrho_c(\zeta):=\sum_{j=0}^{\rmk-1}c_{j}^{(\rmk)}\zeta^{\rmk-1-j}.
\end{align*} 
Following \cite{AkrivisCrouzeixMakridakis:1998MCOM,AkrivisCrouzeixMakridakis:1999NM}, we always let $(\varrho_a,\varrho_b)$ be a strongly $A(0)$-stable implicit scheme, that is, $\varrho_a$ is a Schur polynomial (all
its roots lie strictly inside the unit disk).}

We reformulate the IELM method \eqref{scheme: general multistep-momentum} into a convolution form,
\begin{align}\label{scheme: general imex multistep convolution}	
	\sum_{j=\rmk}^{n}a_{n-j}^{(\rmk)}\partial_{\tau}\myvec{u}^{j}
	=&\,\nu\sum_{j=\rmk}^{n}b_{n-j}^{(\rmk)}\Delta\myvec{u}^{j}
	-\sum_{j=\rmk}^{n}c_{n-j}^{(\rmk)}
	(\nabla p^{j-1}+\myvec{g}^{j-1})-\myvec{u}_{\mathrm{sta}}^{(\rmk,n)}
\end{align}
for $n\ge\rmk$ with the starting term $\myvec{u}_{\mathrm{sta}}^{(\rmk,n)}$ defined by, cf. \cite{JiLiao:2024,LiaoKang:2022,LiaoTangZhou:2021bdf345},
\begin{align}\label{scheme: general imex multistep convolution-initial}	
	\myvec{u}_{\mathrm{sta}}^{(\rmk,n)}:=\sum_{j=1}^{\rmk-1}a_{n-j}^{(\rmk)}\partial_{\tau}\myvec{u}^{j}
	-\nu\sum_{j=1}^{\rmk-1}b_{n-j}^{(\rmk)}\Delta\myvec{u}^{j}
	+\sum_{j=1}^{\rmk-1}c_{n-j}^{(\rmk)}
	(\nabla p^{j-1}+\myvec{g}^{j-1}),
\end{align}
where the values of $a_{j}^{(\rmk)}$, $b_{j}^{(\rmk)}$ and $c_{j}^{(\rmk)}$ are extended to any indexes $j$ but
assume that the coefficients $a_{j}^{(\rmk)}$, $b_{j}^{(\rmk)}$ and $c_{j}^{(\rmk)}$ vanish when $j\ge\rmk$ so that $\myvec{u}_{\mathrm{sta}}^{(\rmk,n)}=0$ for $n\ge2\rmk-1$.

Our framework will use the global discrete energy analysis \cite{LiaoZhang:2021,LiaoKang:2022,LiaoTangZhou:2021bdf345} with the so-called discrete orthogonal convolution (DOC) kernels.  For the finite sequence $\vec{a}^{(\rmk)}$, we define
the DOC kernels $\vec{a}^{(-1,\rmk)}=\big\{a_{0}^{(-1,\rmk)},a_{1}^{(-1,\rmk)},\cdots,a_{\rmk-1}^{(-1,\rmk)},\cdots\big\}$ as follows
\begin{align}\label{def: DOC-Kernels}
	a_{0}^{(-1,\rmk)}:=\frac{1}{a_{0}^{(\rmk)}}
	\quad \mathrm{and} \quad
	a_{j}^{(-1,\rmk)}:=-\frac{1}{a_{0}^{(\rmk)}}
	\sum_{i=1}^{j}a_{j-i}^{(-1,\rmk)}a_{i}^{(\rmk)}\quad \text{for $j\ge1$.}
\end{align}
For $n\ge1$, one has the {discrete orthogonal convolution identity} \cite{LiaoTangZhou:2021bdf345}
\begin{align}\label{eq: orthogonal identity}
	\sum_{\ell=j}^{n}a_{n-\ell}^{(-1,\rmk)}a^{(\rmk)}_{\ell-j}\equiv\delta_{nj}=\sum_{\ell=j}^{n}a_{n-\ell}^{(\rmk)}a^{(-1,\rmk)}_{\ell-j}
	\quad\text{for any $1\leq j\le n$,}
\end{align}
where $\delta_{nj}$ is the Kronecker delta symbol.
Since $a^{(\rmk)}_{\ell}=0$ for $\ell\ge\rmk$, the DOC kernels $a_{j}^{(-1,\rmk)}$ solve the
linear difference equation 	$a_{j}^{(-1,\rmk)}a^{(\rmk)}_{0}+\sum_{\ell=1}^{\rmk-1}a_{j-\ell}^{(-1,\rmk)}a^{(\rmk)}_{\ell}=0$ for $j\ge\rmk$ so that it has the solution $a_{j}^{(-1,\rmk)}=\sum_{\ell=1}^{\rmk-1}\ck_{\ell}^*\lambda_{\ell}^j$, where $\lambda_{\ell}$ $(1\le \ell\le\rmk-1)$ are zero points of the Schur polynomial   $\varrho_{a}(\zeta)$ and $\ck_{\ell}^*$ are constants determined by the initial values $a_{\ell}^{(-1,\rmk)}$ for $0\le\ell\le \rmk-1$. Then there exist positive constants $\ck_{a}$ and $\rho_{\rmk}<1$ such that
\begin{align}\label{ieq: DOC-Kernels decaying}
	\absb{a_{j}^{(-1,\rmk)}}\le \ck_{a}\rho_{\rmk}^j\quad \text{for $j\ge1$,}
\end{align}
that is, the DOC kernels $a_{j}^{(-1,\rmk)}$ decay exponentially.
By exchanging the summation order, one gets
\begin{align*}
	\sum_{j=\rmk}^{n}a_{n-j}^{(-1,\rmk)}
	\sum_{\ell=\rmk}^{j}a_{j-\ell}^{(\rmk)}\partial_{\tau}\myvec{u}^{\ell}
	=&\,\sum_{\ell=\rmk}^{n}\partial_{\tau}\myvec{u}^{\ell}\sum_{j=\ell}^{n}a_{n-j}^{(-1,\rmk)}a_{j-\ell}^{(\rmk)}
	=\partial_{\tau}\myvec{u}^n\quad\text{for $n\ge\rmk$.}
\end{align*}
Multiplying the equation \eqref{scheme: general imex multistep convolution}	 with
the DOC kernels $a_{m-n}^{(-1,\rmk)}$, summing over $n$ from $n=\rmk$ to $m$ and replacing $m$ by $n$, we get
\begin{align}\label{Dis: DOC action multistep formula Dk}
	&\,\sum_{j=\rmk}^{n}a_{n-j}^{(-1,\rmk)}
	\sum_{\ell=\rmk}^ja_{j-\ell}^{(\rmk)}\partial_{\tau}\myvec{u}^{\ell}
	=\nu\sum_{j=\rmk}^{n}a_{n-j}^{(-1,\rmk)}\sum_{\ell=\rmk}^{j}b_{j-\ell}^{(\rmk)}\Delta\myvec{u}^{\ell}\\
	&\,\hspace{1.8cm}
	-\sum_{j=\rmk}^{n}a_{n-j}^{(-1,\rmk)}\sum_{\ell=\rmk}^{j}c_{j-\ell}^{(\rmk)}(\nabla p^{\ell-1}+\myvec{g}^{\ell-1})
	-\sum_{j=\rmk}^{n}a_{n-j}^{(-1,\rmk)}\myvec{u}_{\mathrm{sta}}^{(\rmk,j)}\notag
\end{align}
for $\myvec{x}\in\Omega$ and $n\ge\rmk$. By exchanging the summation order, one can apply discrete orthogonal convolution identity \eqref{eq: orthogonal identity} to find
\begin{align}\label{scheme: general multistep-momentum-differential}
	\partial_{\tau}\myvec{u}^{n}
	=&\,\nu\sum_{\ell=\rmk}^{n}\hat{b}_{n-\ell}^{(\rmk)}\Delta\myvec{u}^{\ell}
	-\sum_{\ell=\rmk}^{n}\hat{c}_{n-\ell}^{(\rmk)}(\nabla p^{\ell-1}+\myvec{g}^{\ell-1})
	-\sum_{\ell=\rmk}^{n}a_{n-\ell}^{(-1,\rmk)}\myvec{u}_{\mathrm{sta}}^{(\rmk,\ell)}
\end{align}
for $\myvec{x}\in\Omega$ and $n\ge\rmk$, where the composited kernels $\hat{b}_{n-\ell}^{(\rmk)}$ and $\hat{c}_{n-\ell}^{(\rmk)}$ are defined by
\begin{align}\label{scheme: multistep composited kernels}
	\hat{b}_{j}^{(\rmk)}:=\sum_{i=0}^{j}a_{j-i}^{(-1,\rmk)}b_{i}^{(\rmk)}\quad\text{and}\quad
	\hat{c}_{j}^{(\rmk)}:=\sum_{i=0}^{j}a_{j-i}^{(-1,\rmk)}c_{i}^{(\rmk)}\quad\text{for $j\ge0$}.
\end{align}

For the coefficients $a_{n-j}^{(\rmk)}$ and the associated DOC kernels $a_{n-j}^{(-1,\rmk)}$ defined by \eqref{def: DOC-Kernels}, we introduce the following $n\times n$ lower triangular Toeplitz matrices
\begin{align*}
	&A_{L,\rmk}:=\begin{pmatrix}
		a_0^{(\rmk)} & && &\\
		\vdots                     & \ddots & &    &\\
		a_{\rmk-1}^{(\rmk)}&\cdots &a_0^{(\rmk)}&&\\
		&\ddots&\cdots&\ddots&\\
		&&a_{\rmk-1}^{(\rmk)}&\cdots&a_0^{(\rmk)}
	\end{pmatrix},\quad
	A_{L,\rmk}^{(-1)}:=
	\begin{pmatrix}
		a_0^{(-1,\rmk)} & &&\\
		a_1^{(-1,\rmk)} &a_0^{(-1,\rmk)} && \\
		a_2^{(-1,\rmk)} &a_1^{(-1,\rmk)} &a_0^{(-1,\rmk)}& \\
		\vdots            &    \ddots          & \ddots  &    \\
		a_{n-1}^{(-1,\rmk)}&\cdots&a_1^{(-1,\rmk)} &a_0^{(-1,\rmk)}
	\end{pmatrix}.
\end{align*}
The discrete orthogonal convolution identity \eqref{eq: orthogonal identity} says that  $A_{L,\rmk}^{(-1)}=A_{L,\rmk}^{-1}.$ In a similar way, one can write out the lower triangular Toeplitz matrices $B_{L,\rmk}$ and $C_{L,\rmk}$ from the discrete coefficients $b_{n-j}^{(\rmk)}$ and $c_{n-j}^{(\rmk)}$; while $B_{L,\rmk}^{(-1)}=B_{L,\rmk}^{-1}$ and $C_{L,\rmk}^{(-1)}=C_{L,\rmk}^{-1}$ are the lower triangular Toeplitz matrices for the corresponding DOC kernels $b_{n-j}^{(-1,\rmk)}$ and $c_{n-j}^{(-1,\rmk)}$ defined in similar to \eqref{def: DOC-Kernels} from $b_{n-j}^{(\rmk)}$ and $c_{n-j}^{(\rmk)}$, respectively. Moreover,
for the composited kernels $\hat{b}_{n-\ell}^{(\rmk)}$ and $\hat{c}_{n-\ell}^{(\rmk)}$ defined in \eqref{scheme: multistep composited kernels}, it is easy to  know that the corresponding lower triangular Toeplitz matrices are
\begin{align}  \label{matrix: hatB_L hatC_L}
	\widehat{B}_{L,\rmk}:=&\,A_{L,\rmk}^{(-1)}B_{L,\rmk}=A_{L,\rmk}^{-1}B_{L,\rmk},\quad
	\widehat{C}_{L,\rmk}:=A_{L,\rmk}^{(-1)}C_{L,\rmk}=A_{L,\rmk}^{-1}C_{L,\rmk}.
\end{align}

In the energy analysis of the  convolution form \eqref{scheme: general multistep-momentum-differential},
the treatment of implicit part requires to determine  the minimum eigenvalue $\lambda_{\mathrm{I}}^{(\rmk)}$ of the Toeplitz matrix $\mathcal{S}(\widehat{B}_{L,\rmk})$, where  $\mathcal{S}(D):=(D+D^T)/2$ for any matrix $D$,
while certain spectral norm bounds $\sigma_{\mathrm{E}}^{(\rmk)}$ and $\sigma_{\mathrm{F}}^{(\rmk)}$ of the  lower triangular Toeplitz matrices
$\widehat{C}_{L,\rmk}$ and $A_{L,\rmk}^{-1}$ should be evaluated in handling the explicit and starting terms. These issues are addressed in Section \ref{sec: semi-generating function method} by the semi-generating function method.

With the help of the Leray-Helmholtz projection operator, the stability analysis of consistent splitting multistep methods \eqref{scheme: general multistep-pressure Poisson}-\eqref{scheme: general multistep-momentum} will be performed in Section \ref{sec: Stability of general IELM} by a complete mathematical induction to the $\ell^{\infty}(H^1)\cap \ell^{2}(H^2)$ norm boundedness of the velocity error.
It is shown that the IELM methods \eqref{scheme: general multistep-pressure Poisson}-\eqref{scheme: general multistep-momentum} are unconditionally stable if the controllability intensity  $\mathfrak{I}_{\mathrm{IE}}^{(\rmk)}:=\lambda_{\mathrm{I}}^{(\rmk)}/\sigma_{\mathrm{E}}^{(\rmk)}>\frac{\sqrt{2}}2$.
Section \ref{sec: existing IELM} witnesses that the stability condition can be fulfilled by two classes of IELM methods, including  GBDF schemes up to fifth order accuracy and SIELM schemes up to ninth order accuracy.
Numerical experiments are included in Section \ref{sec: numerical example} to support our theory.

\section{Semi-generating function method and technical lemmas}\label{sec: semi-generating function method}
\setcounter{equation}{0}

At first, we recall the semi-generating function method to handle the three groups of
discrete coefficients via three complex polynomials on the unit circle.

\begin{lemma}\cite[Lemma 2.1]{LiaoQuanTangZhou:IMES}\label{lem: Toeplitz-Caratheodory}
	For a real sequence $\{a_{0},a_{1},\cdots,a_{n},\cdots\}$, 	we define the semi-generating function $a(\theta):=\sum_{k=0}^{\infty}a_k e^{\imath k\theta }{\in L^2([0,2\pi))}$ with the complex unit $\imath=\sqrt{-1}$.
	For any index $n\ge1$ (while $n\rightarrow\infty$ as the time-step size $\tau\rightarrow0$), consider the following real quadratic form
	\begin{align*}
		Q_n:=\sum_{k=1}^nw_k\sum_{j=1}^ka_{k-j}w_j\quad\text{for any sequence $\{w_{1}, w_{2}, \cdots, w_{n}\}$,}
	\end{align*}
	corresponding to the real symmetric Toeplitz matrix $\mathcal{S}(P_{L, n})=(P_{L, n}+P_{L, n}^T)/2$ with
	the following associated  lower triangular Toeplitz matrix
	\begin{align}\label{lower triangular Toeplitz matrix PL}
		P_{L, n}:=\left(
		\begin{array}{ccccc}
			a_0 & && \\
			a_1 &a_0 & &\\
			\vdots            &    \ddots          & \ddots & \\
			a_{n-1}&\cdots &a_1&a_0\\
		\end{array}
		\right)_{n\times n}.
	\end{align}
	\begin{itemize}[itemindent=-0.5cm]
		\item[(i)] Then  $Q_n$ is positive definite if and only if $\Re\kbra{a(\theta)}>0$ for $\theta\in[0,2\pi)$;
		\item[(ii)]  and the eigenvalues $\lambda_j(Q_n)$ of $Q_n$ can be bounded by
		\begin{align*}
			\min_{\theta\in[0,2\pi)}\Re\kbrab{a(\theta)}\le\lambda_j(Q_n)\le \max_{\theta\in[0,2\pi)}\Re\kbrab{a(\theta)}\quad\text{for any $n\ge j+1\ge1$.}
		\end{align*}
	\end{itemize}
\end{lemma}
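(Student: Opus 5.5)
The plan is to write the quadratic form as an integral over the unit circle by means of the semi-generating function; everything else then follows from properties of the symbol.

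\textbf{Integral representation.} For real $w_1,\dots,w_n$, extend by $w_j=0$ for $j\notin[1,n]$ and set $W(\theta):=\sum_{j=1}^n w_j e^{\imath j\theta}$. By Parseval,
\begin{align*}
Q_n=\sum_{k=1}^n\sum_{j=1}^k a_{k-j}w_kw_j
=\frac{1}{2\pi}\int_0^{2\pi}a(\theta)\,\absb{W(\theta)}^2\zd\theta .
\end{align*}
Indeed, $a(\theta)|W(\theta)|^2=\sum_{k,j,m}a_m w_kw_j e^{\imath(m+j-k)\theta}$, and integration keeps exactly the terms with $m=k-j\ge0$. This requires some care when $a\in L^2$ but is not absolutely summable: only the finitely many coefficients $a_0,\dots,a_{n-1}$ enter, so the identity holds as the $L^2$ pairing of $a$ with the trigonometric polynomial $|W|^2$.

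Since $Q_n$ is real, taking real parts gives
\begin{align*}
Q_n=\frac{1}{2\pi}\int_0^{2\pi}\Re\kbrab{a(\theta)}\absb{W(\theta)}^2\zd\theta,
\qquad
\frac{1}{2\pi}\int_0^{2\pi}\absb{W(\theta)}^2\zd\theta=\sum_{j=1}^n w_j^2=\vec w^T\vec w .
\end{align*}
Moreover $Q_n=\vec w^T\mathcal{S}(P_{L,n})\vec w$, so its eigenvalues are those of $\mathcal{S}(P_{L,n})$.

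\textbf{Part (ii).} This is a Rayleigh-quotient argument. Bounding $\Re a$ above and below by its extrema gives
\begin{align*}
\min_\theta \Re a\;\vec w^T\vec w\;\le\; Q_n\;\le\; \max_\theta \Re a\;\vec w^T\vec w
\end{align*}
for every $\vec w$. The Courant--Fischer theorem then places every eigenvalue of $\mathcal{S}(P_{L,n})$ in $[\min\Re a,\max\Re a]$; when $a$ is only in $L^2$, the min and max are read as essential inf and sup.

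\textbf{Part (i).} Sufficiency: if $\Re a>0$ everywhere and $\Re a$ is continuous (e.g.\ for the finite or exponentially decaying sequences used later), the minimum over the compact circle is positive. Then part (ii) makes $Q_n$ positive definite for every $n$.

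Necessity is the main obstacle, because "positive definite" must be read as holding for all $n$, uniformly. Suppose $\Re a(\theta_0)<0$ at some point. Take $W_n$ to be a Fejér-type kernel concentrated at $\theta_0$, namely $w_j=e^{-\imath j\theta_0}/\sqrt n$, or rather the real parts of such choices, since the $w_j$ must be real. Because $a$ has real coefficients, $\Re a$ is even in $\theta$, so $|W_n|^2$ concentrates at $\pm\theta_0$ and both contributions have the same sign. The density $|W_n|^2/n$ is the Fejér approximate identity, so $Q_n\to\Re a(\theta_0)\cdot(\text{positive mass})<0$ at continuity points. Hence $Q_n$ fails to be positive definite for large $n$, which gives the contrapositive.

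The borderline case $\Re a(\theta_0)=0$ would be stated with the non-strict inequality. I would follow the cited Toeplitz--Carath\'eodory/Grenander--Szeg\H{o} theory and interpret positivity in that uniform-in-$n$ sense.
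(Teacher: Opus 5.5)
Your proposal is correct and follows essentially the standard route: this paper gives no proof of the lemma but imports it from \cite{LiaoQuanTangZhou:IMES}, where it rests on the classical Grenander--Szeg\H{o}/Carath\'eodory--Toeplitz theory for the real symmetric Toeplitz matrix $\mathcal{S}(P_{L,n})$ (diagonal $a_0$, off-diagonals $a_{|i-j|}/2$), whose symbol is $\Re[a(\theta)]=a_0+\sum_{k\ge1}a_k\cos(k\theta)$, and your Parseval identity plus Rayleigh-quotient bound is precisely the proof of that theory. You are also right that the strict ``only if'' in (i) holds only in the uniform-in-$n$ sense: for $a_0=1$, $a_1=-1$ one has $\Re[a(\theta)]=1-\cos\theta$, which vanishes at $\theta=0$, yet $\mathcal{S}(P_{L,n})=\frac12\mathrm{tridiag}(-1,2,-1)$ is positive definite for every $n$; in your concentration step you should either bound the interference term between the two Fej\'er peaks at $\pm\theta_0$ (it is $O(n^{-1/2})$ for $\theta_0\ne0,\pi$), or avoid it by testing the real symmetric matrix with the complex vector $z_j=e^{\imath j\theta_0}/\sqrt{n}$, for which $z^*\mathcal{S}(P_{L,n})z$ is exactly the $n$-th Fej\'er mean of $\Re[a]$ at $\theta_0$.
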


\begin{lemma}\cite[Lemma 2.2]{LiaoQuanTangZhou:IMES}\label{lem: composited generating function}
	Let the functions $a(\theta):=\sum_{j=0}^{\infty}a_je^{\imath j\theta}{\in L^2([0,2\pi))}$ and $b(\theta):=\sum_{j=0}^{\infty}b_je^{\imath j\theta}{\in L^2([0,2\pi))}$ be the semi-generating functions for real sequences $\{a_{0},a_{1},\cdots,a_{k},\cdots\}$  and $\{b_{0},b_{1},\cdots,b_{k},\cdots\}$, respectively.
	\begin{itemize}[itemindent=-0.5cm]
		\item[(i)] 	For the composited sequence $\{\hat{b}_{0},\hat{b}_{1},\cdots,\hat{b}_{k},\cdots\}$ defined by $\hat{b}_{j}:=\sum_{k=0}^{j}a_{j-k}b_{k},$
		the semi-generating function $\hat{b}(\theta)=\sum_{j=0}^{\infty}\hat{b}_je^{\imath j\theta}$ satisfies $\hat{b}(\theta)=a(\theta)b(\theta)$.
		\item[(ii)] Assume that the real sequence $\{\xi_{0},\xi_{1},\cdots,\xi_{k},\cdots\}$ is the DOC kernels of  $\{a_{0},a_{1},\cdots,a_{k},\cdots\}$, defined by  $\xi_0:=\frac1{a_0}$ and $\xi_j:=-\frac1{a_0}\sum_{k=1}^{j}\xi_{j-k}a_{k}$ for $j\ge1$.
		Then
		the associated semi-generating function $\xi(\theta)=\sum_{j=0}^{\infty}\xi_je^{\imath j\theta}$ satisfies $\xi(\theta)=1/a(\theta)$.		
	\end{itemize}
\end{lemma}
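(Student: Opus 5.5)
We prove the two parts of Lemma \ref{lem: composited generating function} by formal manipulation of power series in $z=e^{\imath\theta}$. The justification comes from the $L^2([0,2\pi))$ membership of the semi-generating functions.

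\emph{Part (i).} We view $a(\theta)$ and $b(\theta)$ as boundary values of the power series $A(z)=\sum_{j\ge0}a_jz^j$ and $B(z)=\sum_{j\ge0}b_jz^j$ for $|z|<1$. Since the coefficients are square-summable, both series converge absolutely inside the unit disk. The Cauchy product then gives
\begin{align*}
A(z)B(z)=\sum_{j\ge0}\Big(\sum_{k=0}^{j}a_{j-k}b_k\Big)z^j=\sum_{j\ge0}\hat b_jz^j
\end{align*}
for $|z|<1$.

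To pass to the boundary we need one extra fact. In the application to \eqref{scheme: multistep composited kernels}, the sequences are either finite, like $b^{(\rmk)}_j$ and $c^{(\rmk)}_j$, or decay exponentially, like the DOC kernels by \eqref{ieq: DOC-Kernels decaying}. Hence all series involved are absolutely summable, and the Cauchy product converges absolutely on $|z|=1$ as well. This yields $\hat b(\theta)=a(\theta)b(\theta)$ pointwise. In the general $L^2$ setting, the identity holds in the sense of Fourier coefficients, because the $j$-th Fourier coefficient of $ab$ is exactly $\hat b_j$. In that setting one would state the product as an $L^1$ function and argue via $H^2$ Hardy space boundary limits. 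I will note this and restrict to the absolutely summable case used in the paper.

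\emph{Part (ii).} The recursion defining $\xi_j$ is equivalent to $\sum_{k=0}^{j}\xi_{j-k}a_k=\delta_{j0}$ for all $j\ge0$, which is precisely \eqref{eq: orthogonal identity}. Applying part (i) to the pair $\xi$ and $a$, the composited sequence is the Kronecker sequence $(1,0,0,\dots)$, whose semi-generating function is the constant $1$. Therefore $\xi(\theta)a(\theta)=1$, i.e. $\xi(\theta)=1/a(\theta)$.

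This requires $a(\theta)\neq0$ on the unit circle and $\xi\in L^2$. For the finite sequences $a^{(\rmk)}$, we have $a(\theta)=e^{\imath(\rmk-1)\theta}\varrho_a(e^{-\imath\theta})$. Because $\varrho_a$ is a Schur polynomial, $a$ does not vanish for $|z|\le1$. It follows that $1/A(z)$ is analytic on a disk of radius greater than $1$, its Taylor coefficients coincide with $\xi_j$ by uniqueness, and they decay geometrically, consistent with \eqref{ieq: DOC-Kernels decaying}.

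\emph{Main obstacle.} The only delicate step is the boundary passage, namely convergence of the Cauchy product on $|z|=1$ and invertibility of $a$ there. I would handle it through the absolute summability available in this paper's setting, rather than attempting a general $L^2$ statement.
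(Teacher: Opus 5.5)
Your proposal is correct and follows the same route as the standard proof of the cited lemma (this paper only quotes \cite[Lemma 2.2]{LiaoQuanTangZhou:IMES} and gives no proof of its own): part (i) is the Cauchy product, i.e. an exchange of summation order, of the two semi-generating series, and part (ii) is part (i) applied to the pair $(\xi,a)$, whose composited sequence $\sum_{k=0}^{j}\xi_{j-k}a_k=\delta_{j0}$ has semi-generating function $1$. The convergence caveats you add are refinements rather than gaps: absolute summability covers every use in the paper (the finite $a^{(\rmk)}_j,b^{(\rmk)}_j,c^{(\rmk)}_j$ and the exponentially decaying DOC kernels); your Fourier-coefficient remark already yields the general $L^2$ case of (i), by uniqueness of Fourier coefficients of the $L^1$ function $ab$; and for (ii) the only additional hypothesis actually needed is the implicit one $\xi\in L^2$.
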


\begin{lemma}\cite[Lemma 2.3]{LiaoQuanTangZhou:IMES}\label{lemma: spectral norm bound}
	For the lower triangular Toeplitz matrix $P_{L,n}$ in \eqref{lower triangular Toeplitz matrix PL}
	and the associated semi-generating function $a(\theta)=\sum_{k=0}^{\infty}a_k e^{\imath k\theta }{\in L^2([0,2\pi))}$,
	the spectral norm of $P_{L,n}$ is not larger than $\max_{\theta\in[0,2\pi)}\abs{a(\theta)}$ for any $n\ge1$.
\end{lemma}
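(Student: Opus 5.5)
We bound the spectral norm $\|P_{L,n}\|_2$ by testing against arbitrary vectors and passing to the frequency side. Fix $n\ge1$ and any real (or complex) vectors $\vec{w}=(w_1,\dots,w_n)^T$ and $\vec{v}=(v_1,\dots,v_n)^T$. The $k$-th entry of $P_{L,n}\vec{w}$ is $\sum_{j=1}^k a_{k-j}w_j$, which is exactly the $k$-th coefficient of the full convolution of $\{a_k\}$ with the finitely supported sequence $\{w_j\}$. We truncate that convolution at index $n$. The idea is to embed everything into trigonometric series on $[0,2\pi)$ and use Parseval's identity together with Lemma \ref{lem: composited generating function}(i), which gives $\widehat{\text{convolution}}=a(\theta)w(\theta)$.

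\emph{Step 1: generating functions.} Introduce the trigonometric polynomial $w(\theta):=\sum_{j=1}^n w_j e^{\imath j\theta}$, and consider the infinite sequence $\hat{w}_k:=\sum_{j=1}^{\min(k,n)}a_{k-j}w_j$ for $k\ge1$. Its semi-generating function is $a(\theta)w(\theta)$; this is Lemma \ref{lem: composited generating function}(i) up to an index shift by the factor $e^{\imath\theta}$, which does not change moduli. We may assume $M:=\max_\theta|a(\theta)|<\infty$, since otherwise there is nothing to prove. Then $a w\in L^2$.

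\emph{Step 2: Parseval and truncation.} By Parseval,
\[
\|P_{L,n}\vec{w}\|_2^2=\sum_{k=1}^n|\hat{w}_k|^2\le\sum_{k=1}^\infty|\hat{w}_k|^2=\frac1{2\pi}\int_0^{2\pi}|a(\theta)|^2|w(\theta)|^2\zd\theta .
\]
This is bounded by
\[
\frac{M^2}{2\pi}\int_0^{2\pi}|w(\theta)|^2\zd\theta=M^2\|\vec{w}\|_2^2 .
\]
Taking square roots and the supremum over $\vec{w}\ne0$ gives $\|P_{L,n}\|_2\le M$ for every $n$. Equivalently, one can bound the bilinear form $\vec{v}^T P_{L,n}\vec{w}=\frac1{2\pi}\int_0^{2\pi}a(\theta)w(\theta)\overline{v(\theta)}\zd\theta$ by Cauchy–Schwarz.

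\emph{Main obstacle.} The only delicate point is justifying the identity $\widehat{a*w}=a\,w$ in $L^2$ when $a$ is merely an $L^2$ function rather than a polynomial. Because $w$ is a trigonometric polynomial, $a(\theta)w(\theta)$ is a finite sum of shifted copies of $a$. Its Fourier coefficients are therefore finite linear combinations of the $a_k$, and they coincide with $\hat{w}_k$ term by term. So the interchange of sums is finite and Parseval applies directly. In the paper's applications $a(\theta)$ is a rational function analytic on a neighbourhood of the closed disk, because the DOC kernels decay exponentially as in \eqref{ieq: DOC-Kernels decaying}. In that case the maximum is attained and all series converge absolutely.
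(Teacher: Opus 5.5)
Your proof is correct. The paper gives no proof of this lemma and simply quotes \cite[Lemma 2.3]{LiaoQuanTangZhou:IMES}, so there is no internal argument to compare against.

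What you wrote is the standard argument that an $n\times n$ section of a lower triangular Toeplitz matrix has spectral norm at most $\|a\|_{L^\infty}$. The vector $P_{L,n}\vec w$ is made of the first $n$ Fourier coefficients of $a(\theta)w(\theta)$. Parseval's identity, together with dropping the nonnegative tail $\sum_{k>n}|\hat w_k|^2$, then gives the bound.

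Two equivalent formulations of the same idea:
\begin{itemize}
\item $P_{L,n}$ is the compression of the infinite Toeplitz operator with symbol $a$ on $\ell^2$, and that operator has norm $\|a\|_{L^\infty}$.
\item In terms of the paper's own tools, $P_{L,n}^TP_{L,n}$ is dominated in the Loewner order by the $n\times n$ symmetric Toeplitz matrix generated by the Fourier coefficients of $|a(\theta)|^2$; the difference is exactly the tail you discard. The eigenvalues of that symmetric matrix are bounded by $\max_\theta|a(\theta)|^2$ via the Grenander--Szeg\H{o} bound, the same fact behind Lemma~\ref{lem: Toeplitz-Caratheodory}(ii).
\end{itemize}

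You handle the only technical point correctly. Because $w$ is a trigonometric polynomial, the Fourier coefficients of $a w$ are finite linear combinations of the $a_k$, even when $a$ is merely in $L^2$. Your closing remark is also accurate: since $\varrho_a$ is a Schur polynomial, $\sum_j a_j^{(\rmk)}z^j$ has no zeros in the closed unit disk, so $1/a^{(\rmk)}$ and $c^{(\rmk)}/a^{(\rmk)}$ are analytic on a neighbourhood of it.

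Two cosmetic points:
\begin{itemize}
\item For a general $L^2$ symbol, the $\max$ should be read as $\operatorname{ess\,sup}$. Your estimate only uses $|a|\le M$ almost everywhere, so nothing changes.
\item In the bilinear-form variant, write $\vec v^{*}$ rather than $\vec v^{T}$ if $\vec v$ is allowed to be complex.
\end{itemize}
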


For the discrete coefficients $a_{j}^{(\rmk)}$, $b_{j}^{(\rmk)}$ and $c_{j}^{(\rmk)}$  of the $\rmk$-step multistep method \eqref{scheme: general multistep-momentum},
Lemma \ref{lem: Toeplitz-Caratheodory} defines the associated semi-generating functions
\begin{align}\label{matrix: A_L B_L C_L generating function}
	a^{(\rmk)}(\theta):=\sum_{j=0}^{\rmk-1}a_{j}^{(\rmk)}e^{\imath j\theta},\quad
	b^{(\rmk)}(\theta):=\sum_{j=0}^{\rmk-1}b_{j}^{(\rmk)}e^{\imath j\theta}\quad\text{and}\quad
	c^{(\rmk)}(\theta):=\sum_{j=0}^{\rmk-1}c_{j}^{(\rmk)}e^{\imath j\theta}.
\end{align}
Lemma \ref{lem: composited generating function} (ii) gives the semi-generating function
for the DOC kernels $a_{n-j}^{(-1,\rmk)}$,
\begin{align}\label{matrix: A_L DOC generating function}
	a^{(-1,\rmk)}(\theta):=\sum_{j=0}^{\infty}a_{j}^{(-1,\rmk)}e^{\imath j\theta}
	=\frac{1}{a^{(\rmk)}(\theta)}.
\end{align}
For the composited discrete kernels $\hat{b}_{n-\ell}^{(\rmk)}$ and $\hat{c}_{n-\ell}^{(\rmk)}$ defined in \eqref{scheme: multistep composited kernels},
Lemma \ref{lem: composited generating function} (i)  gives the associated semi-generating functions
\begin{align}\label{matrix: hatB_L hatC_L generating function}
	\hat{b}^{(\rmk)}(\theta):=&\,\sum_{j=0}^{\infty}\hat{b}_{j}^{(\rmk)}e^{\imath j\theta}
	=a^{(-1,\rmk)}(\theta)b^{(\rmk)}(\theta)=\frac{b^{(\rmk)}(\theta)}{a^{(\rmk)}(\theta)},\quad
	\hat{c}^{(\rmk)}(\theta):=\frac{c^{(\rmk)}(\theta)}{a^{(\rmk)}(\theta)}.
\end{align}

The following result builds some close relationships between the semi-generating functions of lower triangular Toeplitz matrices and our discrete energy techniques for IELM methods.

\begin{lemma}\cite[Lemma 2.4]{LiaoQuanTangZhou:IMES}\label{lemma: bound quadratic form}	
\lan{	Assume that the implicit part of the IELM method \eqref{scheme: general multistep-momentum}	 is strongly $A(0)$-stable.}   For the semi-generating functions $a^{(\rmk)}(\theta)$, $b^{(\rmk)}(\theta)$ and $c^{(\rmk)}(\theta)$ defined in \eqref{matrix: A_L B_L C_L generating function}, assume that there exist finite constants $\sigma_{\mathrm{F}}^{(\rmk)}>0$, $\sigma_{\mathrm{E}}^{(\rmk)}>0$ and $\lambda_{\mathrm{I}}^{(\rmk)}>0$  such that
	\begin{align}\label{def: lambda sigma}
		\sigma_{\mathrm{F}}^{(\rmk)}=\max\limits_{\theta\in[0,2\pi)}\abs{\frac{1}{a^{(\rmk)}(\theta)}},\;\;
		\sigma_{\mathrm{E}}^{(\rmk)}=\max\limits_{\theta\in[0,2\pi)}\abs{\frac{c^{(\rmk)}(\theta)}{a^{(\rmk)}(\theta)}},	\;\;
		\lambda_{\mathrm{I}}^{(\rmk)}=\min_{\theta\in[0,2\pi)}
		\Re\kbra{\frac{b^{(\rmk)}(\theta)}{a^{(\rmk)}(\theta)}}.
	\end{align}
	Then, for $n\ge1$, the spectral
	norms of the lower triangular Toeplitz matrices  $A_{L,\rmk}^{-1}$ and $A_{L,\rmk}^{-1}C_{L,\rmk}$ are bounded by the positive constants $\sigma_{\mathrm{F}}^{(\rmk)}$ and $\sigma_{\mathrm{E}}^{(\rmk)}$, respectively, and all eigenvalues of the symmetric matrix $\mathcal{S}(A_{L,\rmk}^{-1}B_{L,\rmk})$ are not less than $\lambda_{\mathrm{I}}^{(\rmk)}$.
	For any sequences $\{v^{i},u^i: i \geq 1\}$, it holds that
	\begin{align*}
		(i)\;\;&\,\sum_{i=1}^{n} \sum_{j=1}^{i}\hat{b}_{i-j}^{(\rmk)}v^{j} v^{i} \geq \lambda_{\mathrm{I}}^{(\rmk)}\sum_{i=1}^{n}\absb{v^{i}}^2\quad\text{for $n\ge1$,}	\\
		(ii)\;\;&\,\sum_{i=1}^{n} \sum_{j=1}^{i}a_{i-j}^{(-1,\rmk)}v^{j} u^{i} \leq
		\sigma_{\mathrm{F}}^{(\rmk)}\sqrt{\sum_{i=1}^n\absb{v^{i}}^2}\sqrt{\sum_{i=1}^n\absb{u^{i}}^2} \quad\text{for $n\ge1$,}\\
		(iii)\;\;&\,\sum_{i=1}^{n} \sum_{j=1}^{i}\hat{c}_{i-j}^{(\rmk)}v^{j}u^{i} \leq
		\sigma_{\mathrm{E}}^{(\rmk)}\sqrt{\sum_{i=1}^n\absb{v^{i}}^2}\sqrt{\sum_{i=1}^n\absb{u^{i}}^2} \quad\text{for $n\ge1$.}	
	\end{align*}
\end{lemma}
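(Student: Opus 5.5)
The plan is to reduce everything to Lemmas \ref{lem: Toeplitz-Caratheodory}, \ref{lem: composited generating function} and \ref{lemma: spectral norm bound}, using \eqref{matrix: A_L DOC generating function}--\eqref{matrix: hatB_L hatC_L generating function} to identify the generating functions involved.

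\textbf{Step 1: $1/a^{(\rmk)}$ is a bona fide bounded function on the circle.} Strong $A(0)$-stability means $\varrho_a$ is a Schur polynomial. Since $a^{(\rmk)}(\theta)=e^{\imath(\rmk-1)\theta}\varrho_a(e^{-\imath\theta})$, the function $a^{(\rmk)}(\theta)$ never vanishes on $[0,2\pi)$. More strongly, the power series $\sum_j a_j^{(-1,\rmk)}z^j=1/\sum_j a_j^{(\rmk)}z^j$ has all its poles outside the closed unit disk; this is consistent with the exponential decay \eqref{ieq: DOC-Kernels decaying}. Hence the DOC kernels are absolutely summable, and the series $a^{(-1,\rmk)}(\theta)$, $\hat b^{(\rmk)}(\theta)$, $\hat c^{(\rmk)}(\theta)$ converge absolutely and uniformly. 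They are continuous, lie in $L^2$, and equal $1/a^{(\rmk)}$, $b^{(\rmk)}/a^{(\rmk)}$, $c^{(\rmk)}/a^{(\rmk)}$ respectively. This justifies applying the identities of Lemma \ref{lem: composited generating function} pointwise, and it makes the maxima and minimum in \eqref{def: lambda sigma} finite and attained.

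\textbf{Step 2: spectral norms and eigenvalues.} The matrix $A_{L,\rmk}^{-1}=A_{L,\rmk}^{(-1)}$ is the lower triangular Toeplitz matrix of the DOC kernels, whose semi-generating function is $1/a^{(\rmk)}$. Lemma \ref{lemma: spectral norm bound} then gives $\|A_{L,\rmk}^{-1}\|_2\le\sigma_{\mathrm F}^{(\rmk)}$. By \eqref{matrix: hatB_L hatC_L}, $A_{L,\rmk}^{-1}C_{L,\rmk}=\widehat C_{L,\rmk}$ is lower triangular Toeplitz with kernels $\hat c_j^{(\rmk)}$; the truncation is exact because lower triangular Toeplitz products only involve the leading $n$ coefficients. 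Its generating function is $c^{(\rmk)}/a^{(\rmk)}$, so the same lemma yields the bound $\sigma_{\mathrm E}^{(\rmk)}$. For $\mathcal S(A_{L,\rmk}^{-1}B_{L,\rmk})=\mathcal S(\widehat B_{L,\rmk})$, Lemma \ref{lem: Toeplitz-Caratheodory}(ii) applied to the sequence $\hat b_j^{(\rmk)}$ gives all eigenvalues $\ge\min_\theta\Re[b^{(\rmk)}/a^{(\rmk)}]=\lambda_{\mathrm I}^{(\rmk)}$.

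\textbf{Step 3: the inequalities.} Write $\vec v=(v^1,\dots,v^n)^T$ and $\vec u$ similarly. Then (i) is $\vec v^T\widehat B_{L,\rmk}\vec v=\vec v^T\mathcal S(\widehat B_{L,\rmk})\vec v\ge\lambda_{\mathrm I}^{(\rmk)}\|\vec v\|^2$, by the Rayleigh quotient bound. Likewise (ii) and (iii) are $\vec u^TA_{L,\rmk}^{-1}\vec v$ and $\vec u^T\widehat C_{L,\rmk}\vec v$. These follow from Cauchy--Schwarz combined with the spectral norm bounds of Step 2.

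\textbf{Main obstacle.} The only delicate point is Step 1, the passage from finite Toeplitz sections to the infinite-series generating functions. One must ensure the hypotheses of the cited lemmas hold, namely $L^2$ membership and that the max and min are taken over a continuous function. The Schur property of $\varrho_a$ is exactly what guarantees this. I would handle it by invoking the root-location argument above rather than relying only on the decay estimate \eqref{ieq: DOC-Kernels decaying}.
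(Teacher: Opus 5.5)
Your proof is correct and follows the same route as the argument the paper relies on; the paper gives no proof itself and simply cites \cite[Lemma 2.4]{LiaoQuanTangZhou:IMES}. You identify $A_{L,\rmk}^{-1}$, $\widehat C_{L,\rmk}=A_{L,\rmk}^{-1}C_{L,\rmk}$ and $\widehat B_{L,\rmk}=A_{L,\rmk}^{-1}B_{L,\rmk}$ as lower triangular Toeplitz matrices with semi-generating functions $1/a^{(\rmk)}$, $c^{(\rmk)}/a^{(\rmk)}$ and $b^{(\rmk)}/a^{(\rmk)}$ via Lemma \ref{lem: composited generating function}, then apply Lemma \ref{lemma: spectral norm bound}, Lemma \ref{lem: Toeplitz-Caratheodory}(ii), and Cauchy--Schwarz or the Rayleigh quotient; your Step 1, which uses the Schur property of $\varrho_a$ to get absolutely summable DOC kernels and continuous symbols, usefully makes explicit the convergence hypotheses the paper leaves implicit.
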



As shown later, the boundedness of $\sigma_{\mathrm{F}}^{(\rmk)}$,   $\sigma_{\mathrm{E}}^{(\rmk)}$ and $\lambda_{\mathrm{I}}^{(\rmk)}$ defined in \eqref{def: lambda sigma} will be essential in the discrete energy analysis for the stability of IELM methods \eqref{scheme: general multistep-momentum}.
From the perspective of the transformed IELM scheme \eqref{scheme: general multistep-momentum-differential}  and the associated discrete energy  analysis with respect to the $\ell^{\infty}(H^1)\cap \ell^{2}(H^2)$ norm, cf. Section \ref{sec: Stability of general IELM},   the spectral norm bound $\sigma_{\mathrm{F}}^{(\rmk)}$ in \eqref{def: lambda sigma} represents the ability  of the DOC kernels  $a_{i-j}^{(-1,\rmk)}$ to control the numerical growth of the starting term $\myvec{u}_{\mathrm{sta}}$ or truncation errors. The smaller the value of $\sigma_{\mathrm{F}}^{(\rmk)}$, the less the DOC kernels amplify the perturbation error. The spectral norm bound $\sigma_{\mathrm{E}}^{(\rmk)}$ in \eqref{def: lambda sigma} represents the ability of the composited kernels  $\hat{c}_{i-j}^{(\rmk)}$ to restrain the possible instability due to the explicit approximation of  $\myvec{g}$ and $\nabla p$. The smaller the value of $\sigma_{\mathrm{E}}^{(\rmk)}$, the better the composited discrete kernels  $\hat{c}_{i-j}^{(\rmk)}$ control the numerical instability. Meanwhile, the minimum eigenvalue $\lambda_{\mathrm{I}}^{(\rmk)}$ in \eqref{def: lambda sigma} represents the ability of the composited discrete kernels  $\hat{b}_{i-j}^{(\rmk)}$ in the implicit part to maintain the dissipation property of $\nu\Delta\myvec{u}$.
The larger the value of $\lambda_{\mathrm{I}}^{(\rmk)}$, the better the composited discrete kernels  $\hat{b}_{i-j}^{(\rmk)}$ inherit the dissipativity of $\nu\Delta\myvec{u}$.

In this sense, we introduce the implicit-explicit controllability intensity $\mathfrak{I}_{\mathrm{IE}}^{(\rmk)}$ in \cite{LiaoQuanTangZhou:IMES}, defined by the ratio of the minimum eigenvalue (dissipation preserving factor) $\lambda_{\mathrm{I}}^{(\rmk)}$ from the implicit part over the spectral norm bound (nonlinear amplification factor) $\sigma_{\mathrm{E}}^{(\rmk)}$ from the explicit part,
\begin{align}\label{def: stability intensity}
	\mathfrak{I}_{\mathrm{IE}}^{(\rmk)}:=\frac{\lambda_{\mathrm{I}}^{(\rmk)}}{\sigma_{\mathrm{E}}^{(\rmk)}}
	=\frac{\min\limits_{\theta\in[0,2\pi)}
		\Re\kbra{\frac{b^{(\rmk)}(\theta)}{a^{(\rmk)}(\theta)}}}{\max\limits_{\theta\in[0,2\pi)}\abs{\frac{c^{(\rmk)}(\theta)}{a^{(\rmk)}(\theta)}}}\,,
\end{align}
where the deduced formula follows from the definitions in \eqref{def: lambda sigma}. It would represent the degree of controllability of the implicit part over the explicit part of a given IELM method.
\begin{lemma}\cite[Lemma 2.6 and Corollary 3.4]{LiaoQuanTangZhou:IMES}
	\label{lem: IELM stability intensity upper bound}
	Assume that the $\rmk$-th step IELM method \eqref{scheme: general multistep-momentum} with the discrete coefficients $a_{j}^{(\rmk)}$, $b_{j}^{(\rmk)}$ and $c_{j}^{(\rmk)}$ are consistent. It holds that
	$	\sigma_{\mathrm{F}}^{(\rmk)}\ge1$,  $\sigma_{\mathrm{E}}^{(\rmk)}\ge1$ and $\lambda_{\mathrm{I}}^{(\rmk)}\le 1$ such that $\mathfrak{I}_{\mathrm{IE}}^{(\rmk)}\le1$.
	While, the implicit-explicit Euler scheme achieves the optimal values,  $\sigma_{\mathrm{F}}^{(1)}=1$, $\sigma_{\mathrm{E}}^{(1)}=1$, $\lambda_{\mathrm{I}}^{(1)}=1$ and $\mathfrak{I}_{\mathrm{IE}}^{(1)}=1$.
\end{lemma}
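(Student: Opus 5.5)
The plan is to evaluate the three semi-generating functions at the single point $\theta=0$, where consistency pins their values. Consistency of the IELM method \eqref{scheme: general multistep-momentum} means that exact data satisfy the scheme up to a truncation term. Applying it to a linear-in-time function, equivalently $\partial_\tau u\equiv$ const, gives the first-order consistency conditions
\begin{align*}
\sum_{j=0}^{\rmk-1}a_j^{(\rmk)}=\sum_{j=0}^{\rmk-1}b_j^{(\rmk)}=\sum_{j=0}^{\rmk-1}c_j^{(\rmk)}.
\end{align*}
After the natural normalization of the time derivative, this common value is $1$, so $a^{(\rmk)}(0)=b^{(\rmk)}(0)=c^{(\rmk)}(0)=1$. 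The normalization is genuinely needed: if all three sums equal some $s\neq1$, then $1/a^{(\rmk)}(0)=1/s$, so the bound on $\sigma_{\mathrm{F}}^{(\rmk)}$ uses $s=1$. The ratios $b^{(\rmk)}/a^{(\rmk)}$ and $c^{(\rmk)}/a^{(\rmk)}$ are insensitive to $s$ anyway.

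The three bounds then follow from evaluating the extrema in \eqref{def: lambda sigma} at $\theta=0$. Since a maximum is at least any value and a minimum is at most any value, and $a^{(\rmk)}(0)=1\neq0$ (nonvanishing on the circle is already implicit in the finiteness assumption of Lemma \ref{lemma: bound quadratic form}), I would write
\begin{align*}
\sigma_{\mathrm{F}}^{(\rmk)}&\ge\abs{\frac{1}{a^{(\rmk)}(0)}}=1,\\
\sigma_{\mathrm{E}}^{(\rmk)}&\ge\abs{\frac{c^{(\rmk)}(0)}{a^{(\rmk)}(0)}}=1,\\
\lambda_{\mathrm{I}}^{(\rmk)}&\le\Re\kbra{\frac{b^{(\rmk)}(0)}{a^{(\rmk)}(0)}}=1.
\end{align*}
Combining these in the definition \eqref{def: stability intensity} gives $\mathfrak{I}_{\mathrm{IE}}^{(\rmk)}=\lambda_{\mathrm{I}}^{(\rmk)}/\sigma_{\mathrm{E}}^{(\rmk)}\le1$.

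For the implicit-explicit Euler scheme with $\rmk=1$, the triad is $a_0=b_0=c_0=1$, matching \eqref{scheme: IERK1-momentum}. All semi-generating functions are then identically $1$, so every max and min equals $1$ and all the inequalities become equalities.

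The only delicate step is fixing precisely what "consistent" means here, namely that the coefficient sums equal $1$ under the paper's normalization. I would state it explicitly as the order-one condition obtained by inserting a linear polynomial into \eqref{scheme: general multistep-momentum}, with $\partial_\tau$ exact on linear functions and the extrapolation exact on constants. With that settled, the rest is immediate.
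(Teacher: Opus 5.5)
Your proposal is correct and takes essentially the same route as the cited result (the paper itself only quotes Lemma 2.6 and Corollary 3.4 of the reference without proof): normalized consistency gives $a^{(\rmk)}(0)=b^{(\rmk)}(0)=c^{(\rmk)}(0)=1$, comparing the extrema in \eqref{def: lambda sigma} with their values at $\theta=0$ yields the three bounds (hence $\mathfrak{I}_{\mathrm{IE}}^{(\rmk)}\le1$), and the IMEX Euler triad $(1,1,1)$ attains equality throughout. Your remark that the unit normalization $\sum_j a_j^{(\rmk)}=1$ is genuinely needed for $\sigma_{\mathrm{F}}^{(\rmk)}\ge1$ is also correct: a uniformly rescaled scheme with common coefficient sum $s>1$ gives $\sigma_{\mathrm{F}}^{(\rmk)}\le 1/s<1$ in the IMEX Euler case. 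In the paper this normalization is implicit in the convention that the discrete operators approximate $u'(t_*)$ and $u(t_*)$ with unit weight.
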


To handle the nonlinear convection term $\myvec{u}\cdot\nabla\myvec{u}$, we need the following lemma, see \cite[(4.9)-(4.14)]{LiuLiuPego:2007}, which can be derived from
the Ladyzhenskaya's inequalities, the embedding from $H^1(\Omega)$ into $L^4(\Omega)$ and $L^6(\Omega)$,
and the elliptic regularity estimate.
\begin{lemma}\label{lem: nonlinear convective bound}	
	For $\myvec{u},\myvec{v}\in H^2(\Omega,\mathbb R^d)\cap H_0^1(\Omega,\mathbb R^d)$, there exists a constant $\ck_l>0$ such that
	\begin{align*}
		\mynormb{\myvec{u}\cdot\nabla\myvec{v}}^2\le &\,
		\ck_l\mynormb{\myvec{u}}\mynormb{\nabla\myvec{u}}
		\mynormb{\nabla\myvec{v}}\mynormb{\Delta\myvec{v}}\quad\text{for $d=2$,}\\
		\mynormb{\myvec{u}\cdot\nabla\myvec{v}}^2\le&\,
		\ck_l\mynormb{\nabla\myvec{u}}^2
		\mynormb{\nabla\myvec{v}}\mynormb{\Delta\myvec{v}}\quad\text{for $d=2,3$.}		
	\end{align*}
\end{lemma}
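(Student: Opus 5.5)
The plan is to bound $\myvec{u}\cdot\nabla\myvec{v}$ pointwise by Hölder, control the Lebesgue norms of $\myvec{u}$ and $\nabla\myvec{v}$ by Ladyzhenskaya/Gagliardo--Nirenberg and Sobolev embedding, and then replace $\|\nabla\myvec{v}\|_{H^1}$ by $\|\Delta\myvec{v}\|$ using elliptic regularity. The starting point is
\[
\mynormb{\myvec{u}\cdot\nabla\myvec{v}}^2\le \mynormb{\myvec{u}}_{L^4}^2\mynormb{\nabla\myvec{v}}_{L^4}^2 ,
\]
which holds in either dimension.

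For $d=2$ we apply Ladyzhenskaya's inequality $\|w\|_{L^4}^2\le C\|w\|\,\|\nabla w\|$ to both factors.
\begin{itemize}
\item With $w=\myvec{u}\in H_0^1$ it gives $\|\myvec{u}\|_{L^4}^2\le C\|\myvec{u}\|\|\nabla\myvec{u}\|$.
\item With $w=\nabla\myvec{v}$, which is not in $H_0^1$, we use the general $H^1$ version $\|w\|_{L^4}^2\le C\|w\|\,\|w\|_{H^1}$. Hence $\|\nabla\myvec{v}\|_{L^4}^2\le C\|\nabla\myvec{v}\|\,\|\myvec{v}\|_{H^2}$.
\end{itemize}
Elliptic regularity for the Dirichlet Laplacian on the smooth bounded domain gives $\|\myvec{v}\|_{H^2}\le C\|\Delta\myvec{v}\|$ for $\myvec{v}\in H^2\cap H_0^1$. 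Combining these yields the first inequality.

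For the second inequality, in $d=3$ we use a different Hölder split:
\[
\|\myvec{u}\cdot\nabla\myvec{v}\|\le\|\myvec{u}\|_{L^6}\|\nabla\myvec{v}\|_{L^3}.
\]
\begin{itemize}
\item Sobolev embedding with Poincaré gives $\|\myvec{u}\|_{L^6}\le C\|\nabla\myvec{u}\|$ for $\myvec{u}\in H_0^1$.
\item The interpolation $\|w\|_{L^3}^2\le C\|w\|\,\|w\|_{L^6}\le C\|w\|\,\|w\|_{H^1}$ with $w=\nabla\myvec{v}$, followed by elliptic regularity, gives $\|\nabla\myvec{v}\|_{L^3}^2\le C\|\nabla\myvec{v}\|\|\Delta\myvec{v}\|$.
\end{itemize}
For $d=2$ the second inequality follows directly from the first via Poincaré, $\|\myvec{u}\|\le C\|\nabla\myvec{u}\|$. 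Finally, $\ck_l$ is taken as the maximum of the resulting constants.

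The only delicate point is that $\nabla\myvec{v}$ lacks zero boundary values. The interpolation inequalities must therefore be used in their full $H^1$-norm form, and the lower-order part $\|\nabla\myvec{v}\|$ of $\|\nabla\myvec{v}\|_{H^1}$ is absorbed into $\|\Delta\myvec{v}\|$. That absorption is justified because $\|\nabla\myvec{v}\|^2=-\langle\myvec{v},\Delta\myvec{v}\rangle\le C\|\nabla\myvec{v}\|\|\Delta\myvec{v}\|$ for $\myvec{v}\in H_0^1$, using Poincaré.
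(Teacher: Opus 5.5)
Your proof is correct and takes the same route the paper indicates: the paper gives no proof, citing Liu--Liu--Pego (4.9)--(4.14) and naming Ladyzhenskaya's inequality, the embeddings of $H^1$ into $L^4$ and $L^6$, and elliptic regularity. Your steps are H\"older with $L^4\times L^4$ in 2D and $L^6\times L^3$ in 3D, interpolation for $\nabla\myvec{v}$ in its full $H^1$-norm form (correctly noting that it lacks zero trace), $\|\myvec{v}\|_{H^2}\le C\|\Delta\myvec{v}\|$ on the smooth domain, and Poincar\'e to obtain the second bound in $d=2$.
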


To bound the pressure gradient, it is to introduce
the Stokes pressure $p_{\mathrm{s}}=p_{\mathrm{s}}(\myvec{u})$ for any $\myvec{u}\in H^2(\Omega,\mathbb R^d)$, defined from
$\nabla p_{\mathrm{s}}(\myvec{u}):=(\Delta \mathcal{P}-\mathcal{P}\Delta)\myvec{u}.$
Here $\mathcal{P}$ is the Leray-Helmholtz projection operator onto divergence-free fields with zero normal component, providing the Helmholtz decomposition $\myvec{u}=\mathcal{P}\myvec{u}+\nabla \phi$, where
$$\myinner{\mathcal{P}\myvec{u},\nabla \varphi}=\myinner{\myvec{u}-\nabla \phi,\nabla\varphi}=0\quad\text{$\forall \varphi\in H^1(\Omega)$.}$$
As presented in \cite[(2.1)-(2.2)]{LiuLiuPego:2007}, $p_{\mathrm{s}}(\myvec{u})$ is determined as the mean-zero solution of
\begin{align}   \label{conti: Stokes pressure}
	\myinner{\nabla p_{\mathrm{s}}(\myvec{u}),\nabla \varphi}=\myinner{\Delta \myvec{u}-\nabla\nabla\cdot\myvec{u},\nabla \varphi}\quad\text{$\forall \varphi\in H^1(\Omega)$.}
\end{align}
Also, we have the following result.
\begin{lemma}\cite[Theorem 1.2]{LiuLiuPego:2007}\label{lem: Stokes pressure bound}
	Let $\Omega\subset\mathbb{R}^d$ $(d\ge2)$ be a connected bounded domain with $C^3$ boundary $\partial\Omega$.
	Then for any constant $\varepsilon>0$, there exists a constant $\ck_{\varepsilon}>0$ such that for all vector fields $\myvec{u}\in H^2(\Omega,\mathbb R^d)\cap H^1_0(\Omega,\mathbb R^d)$,
	$$\mynorm{(\Delta \mathcal{P}-\mathcal{P}\Delta)\myvec{u}}^2
	\le(\tfrac12+\varepsilon)\mynorm{\Delta\myvec{u}}^2+\ck_{\varepsilon}\mynorm{\nabla\myvec{u}}^2.$$
\end{lemma}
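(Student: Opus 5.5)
The plan is to reduce the estimate to a flat-boundary model problem, where the constant $\frac12$ appears exactly, and to treat curvature and cut-off effects as perturbations that cost only $\varepsilon\mynorm{\Delta\myvec{u}}^2+\ck_\varepsilon\mynorm{\nabla\myvec{u}}^2$.

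\emph{Step 1: characterize the Stokes pressure.} From \eqref{conti: Stokes pressure}, $p_{\mathrm{s}}$ is a weak solution of a Neumann problem. Using $\Delta\myvec{u}-\nabla\nabla\cdot\myvec{u}=-\nabla\times\nabla\times\myvec{u}$, the interior source is divergence-free, so $p_{\mathrm{s}}$ is harmonic in $\Omega$. Its Neumann datum is $\partial_{\myvec{n}}p_{\mathrm{s}}=-\myvec{n}\cdot\nabla\times\nabla\times\myvec{u}$ on $\partial\Omega$. Since $\myvec{u}=0$ on $\partial\Omega$, this datum involves only normal derivatives of tangential components, and it is a tangential divergence of a quantity built from $\nabla\myvec{u}|_{\partial\Omega}$. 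Hence $\mynorm{\nabla p_{\mathrm{s}}}$ is controlled by an $H^{1/2}$-type trace norm of the vorticity, and only the part of $\myvec{u}$ in a thin boundary layer contributes at leading order.

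\emph{Step 2: localize and compute exactly in the half-space.} Take a partition of unity subordinate to boundary charts in which $\partial\Omega$ is flattened by a $C^3$ diffeomorphism, together with an interior patch. In a chart of width $\delta$, the leading part is the half-space problem $\mathbb{R}^d_+=\{z>0\}$ with $\myvec{u}(x',0)=0$. Fourier transforming in $x'$ (frequency $\xi$, $k=|\xi|$), the harmonic function is $\hat p_{\mathrm{s}}(\xi,z)=A(\xi)e^{-kz}$. Then
\begin{align*}
\int_0^\infty\bigl(k^2+|\partial_z|^2\bigr)\big|\hat p_{\mathrm{s}}\big|^2\zd z=k|A|^2 .
\end{align*}
Both $A$ and $\int_0^\infty|\widehat{\Delta\myvec{u}}|^2\zd z$ are expressed through the ODE profile $z\mapsto\hat{\myvec{u}}(\xi,z)$ with $\hat{\myvec{u}}(\xi,0)=0$. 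Writing $\widehat{\Delta\myvec{u}}=(\partial_z^2-k^2)\hat{\myvec{u}}=:\hat{\myvec{f}}$ and using the Green function of $\partial_z^2-k^2$ with Dirichlet condition at $z=0$, the boundary datum for the pressure becomes an integral $\int_0^\infty e^{-kz}\,(\text{tangential and normal combination of }\hat{\myvec{f}})\zd z$. Cauchy--Schwarz with $\int_0^\infty e^{-2kz}\zd z=1/(2k)$ then gives, for each $\xi$, $k|A|^2\le\frac12\int_0^\infty|\hat{\myvec{f}}|^2\zd z$. Summing over $\xi$ by Plancherel yields $\mynorm{\nabla p_{\mathrm{s}}}^2\le\frac12\mynorm{\Delta\myvec{u}}^2$ in the half-space. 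Getting exactly $\frac12$ requires checking that the normal-component contributions combine, via $\partial_z\hat u_d=-\imath\xi\cdot\hat{\myvec{u}}'$ only at the boundary, into a single projection of $\hat{\myvec{f}}$ onto $e^{-kz}$. This bookkeeping is the delicate part of the step.

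\emph{Step 3: perturbation and absorption.} In each chart, the true operator differs from the flat one by terms with coefficients that are $O(\delta)$, coming from the metric in $C^3$ coordinates, plus lower-order terms involving $\nabla\myvec{u}$. Commutators with cut-offs are also lower order. These give errors of size $C\delta\mynorm{\Delta\myvec{u}}^2+C_\delta\mynorm{\nabla\myvec{u}}\mynorm{\Delta\myvec{u}}$. The interior patch contributes only $\ck\mynorm{\nabla\myvec{u}}^2$ because of harmonic smoothing away from the boundary. Choosing $\delta$ small relative to $\varepsilon$ and applying Young's inequality gives $(\frac12+\varepsilon)\mynorm{\Delta\myvec{u}}^2+\ck_\varepsilon\mynorm{\nabla\myvec{u}}^2$.

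\emph{Main obstacle.} I expect the hardest part to be summing the localized estimates without losing the sharp constant. Overlapping charts can accumulate factors, so I would use a partition with $\sum\chi_i^2=1$ and estimate $\mynorm{\chi_i\nabla p_{\mathrm{s}}}$ chart by chart. The dependence of $p_{\mathrm{s}}$ on $\myvec{u}$ is nonlocal, but this nonlocality is handled by the exponential decay $e^{-kz}$: its cross-chart leakage is a smoothing operator that is bounded by $\mynorm{\nabla\myvec{u}}$.
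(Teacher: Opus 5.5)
The paper gives no proof of this lemma. It is quoted from Liu, Liu and Pego \cite{LiuLiuPego:2007}, so the meaningful comparison is with their argument. Your outline is a viable route, but it is organised differently from theirs.

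\emph{How the routes differ.} In outline, their proof is global rather than chart-based. They work in a thin layer $\{x\in\Omega:\operatorname{dist}(x,\partial\Omega)<s\}$ with the extended unit normal. Using the boundary representation of $p_{\mathrm s}$ and integrations by parts, they write $\|\nabla p_{\mathrm s}\|^2$ as a pairing of the tangential gradient of $p_{\mathrm s}$ with $\Delta\mathbf u$, up to lower-order terms. They then use that the tangential gradient of a harmonic function carries asymptotically at most half of its Dirichlet energy near $\partial\Omega$; interior estimates handle the rest. In the half-space, with a prime denoting tangential components, these two facts read $\|\nabla p\|^2=\langle\nabla'p,\Delta\mathbf u'\rangle$ and $\|\nabla'p\|=\|\partial_zp\|$. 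Your per-frequency Cauchy--Schwarz against $e^{-kz}$ is exactly their Fourier-side form.

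Your frozen-coefficient approach buys a fully explicit model computation, which also shows that $\tfrac12$ is sharp (take $\hat{\mathbf f}'$ parallel to $\xi e^{-kz}$), plus a standard perturbative template. What it costs is the localisation machinery: relating $\chi_i\nabla p_{\mathrm s}(\mathbf u)$ to $\nabla p_{\mathrm s}(\chi_i\mathbf u)$, and the Neumann problem on $\Omega$ to the one on the half-space. The layer argument never needs this, because it works with the single global harmonic function $p_{\mathrm s}$.

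\emph{Details to correct.} None of these breaks the route.

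In Step~2 the ``delicate bookkeeping'' does not arise. Since $\mathbf u=0$ at $z=0$, the Neumann datum is $\partial_z\hat p_{\mathrm s}(\xi,0)=-\imath\xi\cdot\partial_z\hat{\mathbf u}'(\xi,0)$, so the normal component never enters. The relation $\partial_z\hat u_d=-\imath\xi\cdot\hat{\mathbf u}'$ that you invoke is a divergence constraint that $\mathbf u$ does not satisfy, and it is not needed. Using $-\partial_z\hat{\mathbf u}'(\xi,0)=\int_0^\infty e^{-kz}\hat{\mathbf f}'\,\mathrm{d}z$ and $|\xi\cdot\hat{\mathbf f}'|\le k|\hat{\mathbf f}'|$ gives $k|A|^2\le\tfrac12\int_0^\infty|\hat{\mathbf f}'|^2\,\mathrm{d}z$ at once. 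This point is essential: a normal contribution of comparable weight would make the same Cauchy--Schwarz yield only the constant $1$.

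The cut-off and cross-chart errors are not bounded by $\|\nabla\mathbf u\|$ alone, and the normal decay $e^{-kz}$ is not what localises them tangentially. For $\mathbf u=ze^{-z/\eta}\varphi(x')\mathbf e_1$ one has $\|\nabla\mathbf u\|\sim\eta^{1/2}\to0$. Yet $\nabla p_{\mathrm s}$, driven by $\partial_z\mathbf u|_{z=0}=\varphi\mathbf e_1$, is independent of $\eta$ and nonzero away from $\operatorname{supp}\varphi$. What does hold, from the weak problem satisfied by $\chi_ip_{\mathrm s}(\mathbf u)-p_{\mathrm s}(\chi_i\mathbf u)$, is a bound by $C_\delta(\|\nabla\mathbf u\|+\|\nabla\mathbf u\|_{L^2(\partial\Omega)}+\|p_{\mathrm s}\|+\|p_{\mathrm s}\|_{L^2(\partial\Omega)})$. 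By trace interpolation, $H^2$-regularity and a duality bound for the mean-zero $p_{\mathrm s}$, its square is at most $\epsilon\|\Delta\mathbf u\|^2+C_{\epsilon,\delta}\|\nabla\mathbf u\|^2$, which is all you need.

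The interior patch contributes nothing at all. $(\Delta-\nabla\nabla\cdot)(\chi_0\mathbf u)$ is divergence-free and compactly supported, so $\nabla p_{\mathrm s}(\chi_0\mathbf u)=0$.

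Finally, carry out the curved-versus-flat comparison in the weak (energy) formulation, since $p_{\mathrm s}$ is in general only $H^1$ up to $\partial\Omega$.
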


The constant $\tfrac12+\varepsilon$ before the norm $\mynorm{\Delta\myvec{u}}$ would be essential to  maintain the unconditional stability of any numerical methods with certain explicit approximation of pressure gradient $\nabla p$ because the involved norm $\mynorm{\Delta\myvec{u}}$ should be properly controlled by using the dissipation term $\nu\Delta\myvec{u}$.


\section{Stability of general IELM methods}\label{sec: Stability of general IELM}
\setcounter{equation}{0}

\begin{theorem}\label{thm: NS multistep stability}
	Assume that there exists a finite time $T_*>0$ such that the solution pair $(\myvec{u},p)$ of the  INSE model \eqref{cont: INSE-momentum}-\eqref{cont: INSE-incompressible} on the spatial domain $\Omega$ is regular and  fulfills $\myvec{u}\in { C([0,T_*];H_0^1(\Omega)\cap H^2(\Omega))}$,
	$\partial_{t}\myvec{u}\in { L^2([0,T_*];H^2(\Omega))}$, $\partial_{tt}\myvec{u}\in { L^2([0,T_*];L^2(\Omega))}$ and $\partial_{t}p\in{  L^2([0,T_*];H^1(\Omega))}$.
	\lan{Assume further that  the  IELM  method \eqref{scheme: general multistep-momentum} satisfies the assumptions of Lemma \ref{lemma: bound quadratic form}}.	If the implicit-explicit controllability intensity $\mathfrak{I}_{\mathrm{IE}}^{(\rmk)}>\frac{\sqrt{2}}{2}$,
	and the time-step size $\tau$ is sufficiently small, the consistent splitting multistep methods \eqref{scheme: general multistep-pressure Poisson}-\eqref{scheme: general multistep-momentum} for the  INSE model \eqref{cont: INSE-momentum}-\eqref{cont: INSE-incompressible} are unconditionally stable, provided the starting solutions $(\myvec{u}^{j},p^{j})$ for $1\le j\le \rmk-1$ are sufficiently accurate.
\end{theorem}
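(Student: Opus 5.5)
We will prove the statement by an error analysis. Let $\myvec{e}^n:=\myvec{U}^n-\myvec{u}^n$ and $\eta^n:=P^n-p^n$. Inserting the exact solution into \eqref{scheme: general multistep-pressure Poisson}-\eqref{scheme: general multistep-momentum} produces truncation residuals $\myvec{\xi}^n$. These are of size $O(\tau^{\rmk})$ in $L^2$, given the stated regularity (a Taylor expansion with integral remainders). Subtracting yields an error system of the same convolution form \eqref{scheme: general multistep-momentum-differential}:
\begin{align*}
\partial_\tau\myvec{e}^n=\nu\sum_{\ell=\rmk}^n\hat b^{(\rmk)}_{n-\ell}\Delta\myvec{e}^\ell-\sum_{\ell=\rmk}^n\hat c^{(\rmk)}_{n-\ell}\big(\nabla\eta^{\ell-1}+\myvec{g}(\myvec{U}^{\ell-1})-\myvec{g}(\myvec{u}^{\ell-1})\big)-\sum_{\ell=\rmk}^n a^{(-1,\rmk)}_{n-\ell}\big(\myvec{e}^{(\rmk,\ell)}_{\mathrm{sta}}+\myvec{\xi}^\ell\big).
\end{align*}
From the error pressure Poisson equation and the definition \eqref{conti: Stokes pressure}, we split $\nabla\eta^{j}=\nu\nabla p_{\mathrm{s}}(\myvec{e}^{j})-(I-\mathcal{P})\big(\myvec{g}(\myvec{U}^j)-\myvec{g}(\myvec{u}^j)\big)$. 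Since $\nabla p_{\mathrm s}(\myvec e)=(\Delta\mathcal P-\mathcal P\Delta)\myvec e$, Lemma \ref{lem: Stokes pressure bound} gives $\|\nabla p_{\mathrm s}(\myvec e^j)\|^2\le(\tfrac12+\varepsilon)\|\Delta\myvec e^j\|^2+\ck_\varepsilon\|\nabla\myvec e^j\|^2$.

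We then run a global energy estimate. Take the $L^2$ inner product of the error equation with $-\Delta\myvec{e}^n$ and sum over $n$ from $\rmk$ to $m$. The left side gives $\sum_n\langle\partial_\tau\nabla\myvec{e}^n,\nabla\myvec{e}^n\rangle\ge\frac1{2\tau}(\|\nabla\myvec e^m\|^2-\|\nabla\myvec e^{\rmk-1}\|^2)$, using $\myvec e^n=0$ on $\partial\Omega$. Lemma \ref{lemma: bound quadratic form}(i), applied with $v^i=\Delta\myvec e^i$ pointwise in $x$ and then integrated, gives the dissipation $\nu\lambda_{\mathrm I}^{(\rmk)}\sum_n\|\Delta\myvec e^n\|^2$. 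Lemma \ref{lemma: bound quadratic form}(iii) bounds the explicit pressure contribution by
\[
\nu\sigma_{\mathrm E}^{(\rmk)}\Big(\sum_n\|\nabla p_{\mathrm s}(\myvec e^{n-1})\|^2\Big)^{1/2}\Big(\sum_n\|\Delta\myvec e^n\|^2\Big)^{1/2}\le\nu\sigma_{\mathrm E}^{(\rmk)}\sqrt{\tfrac12+\varepsilon}\sum_n\|\Delta\myvec e^n\|^2+\text{lower order}.
\]
Because $\mathfrak I_{\mathrm{IE}}^{(\rmk)}>\sqrt2/2$, we can fix $\varepsilon$ small enough that $\lambda_{\mathrm I}^{(\rmk)}-\sigma_{\mathrm E}^{(\rmk)}\sqrt{1/2+\varepsilon}=:2\delta>0$. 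This step is the crux and the source of the threshold $\sqrt2/2$. The index shift $n-1$ versus $n$ costs only the starting-level terms.

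The remaining terms are absorbed as follows. The nonlinear difference $\myvec g(\myvec U)-\myvec g(\myvec u)=\myvec e\cdot\nabla\myvec U+\myvec u\cdot\nabla\myvec e$, with $\myvec u=\myvec U-\myvec e$, is bounded in $L^2$ via Lemma \ref{lem: nonlinear convective bound}. Since $\|I-\mathcal P\|\le1$, the same bound covers its projected part appearing in $\nabla\eta$. Lemma \ref{lemma: bound quadratic form}(iii) and Young's inequality then give at most $\delta\nu\sum\|\Delta\myvec e^n\|^2+C_\nu\sum\|\nabla\myvec e^{n-1}\|^2(1+\|\nabla\myvec e^{n-1}\|^2)$-type terms. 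The starting and truncation terms are handled by Lemma \ref{lemma: bound quadratic form}(ii) with $\sigma_{\mathrm F}^{(\rmk)}$ and Young's inequality, leaving $C\sum_n(\|\myvec\xi^n\|^2+\|\myvec e^{(\rmk,n)}_{\mathrm{sta}}\|^2)$; the latter vanishes for $n\ge2\rmk-1$ and is small by the accuracy of the starting values. Multiplying by $2\tau$ gives
\[
\|\nabla\myvec e^m\|^2+\delta\nu\tau\sum_{n\le m}\|\Delta\myvec e^n\|^2\le C\tau\sum_{n<m}(1+\|\nabla\myvec e^n\|^2)\|\nabla\myvec e^n\|^2+C(\tau^{2\rmk}+\text{start}).
\]

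Closing the argument requires controlling the cubic growth, which we expect to be the main technical obstacle. We will use complete induction on $m$ with the hypothesis $\|\nabla\myvec e^n\|\le1$ (equivalently, $\|\nabla\myvec u^n\|\le\|\nabla\myvec U^n\|+1$) for $n<m$. This makes the nonlinear coefficient uniformly bounded, so the discrete Gronwall inequality yields $\|\nabla\myvec e^m\|^2+\nu\tau\sum\|\Delta\myvec e^n\|^2\le C e^{CT}(\tau^{2\rmk}+\text{start})$. For $\tau$ small and accurate starting data the right side is $\le1$, which closes the induction. It gives $\ell^\infty(H^1)\cap\ell^2(H^2)$ boundedness of the numerical velocity, hence of the pressure via the Poisson equation, on $[0,T_*]$. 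The delicate points are that the Gronwall constant must not depend on the induction level, and that in $d=3$ the one-step bound $\|\nabla\myvec e^m\|$ must be obtained before the hypothesis is used at level $m$. We handle the latter by bounding the $n=m$ nonlinear term (which enters only at index $m-1$) within the hypothesis.
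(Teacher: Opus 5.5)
Your proposal is correct and follows essentially the same route as the paper's proof. Both use the DOC-kernel convolution form of the error equation and test it with $-\Delta$ of the velocity error. Both apply Lemma~\ref{lemma: bound quadratic form} (i)--(iii) to the implicit, explicit and residual terms, and use the Stokes pressure estimate of Lemma~\ref{lem: Stokes pressure bound} to obtain the threshold $\lambda_{\mathrm{I}}^{(\rmk)}>\sqrt{1/2+\varepsilon}\,\sigma_{\mathrm{E}}^{(\rmk)}$; your splitting $\sqrt{a+b}\le\sqrt{a}+\sqrt{b}$ is equivalent to the paper's weighted Young inequality. Both then close an induction on an $H^1$ error bound with the discrete Gr\"onwall inequality.

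One correction: under the regularity stated in the theorem the truncation error is only $O(\tau)$, as the paper notes, not $O(\tau^{\rmk})$. This does no harm here, because closing the induction only requires the right-hand side to be small for small $\tau$.
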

\begin{proof}
	Let the exact solutions $(\myvec{U}^{n},P^{n})$ solve the time-discrete system \eqref{scheme: general multistep-pressure Poisson}-\eqref{scheme: general multistep-momentum} with the truncation error $\myvec{d}^{n}$ for $1\le n\le N$.
	According to the regularity assumption of solution, there exists a constant $\ck_{\myvec{u}}>0$ such that
	\begin{align*}
		\mynormb{\nabla\myvec{U}^{n}}^2+\mynormb{\Delta\myvec{U}^{n}}^2
		+\sum_{k=1}^n\tau\mynormb{\Delta\myvec{U}^{k}}^2+\sum_{k=1}^n\tau\mynormb{\nabla P^{k}}^2\le \ck_{\myvec{u}}^2
		\quad \text{for $1\le n\le N$.}
	\end{align*}	
	Let the solution errors  $\tilde{\myvec{u}}^{n}:=\myvec{U}^{n}-\myvec{u}^{n}$ and $\tilde{p}^{n}:=P^{n}-p^{n}$.
	Also, define the nonlinear error $\tilde{\myvec{g}}^{n}$ by
	\begin{align}\label{error: nonlinear convection}
		\tilde{\myvec{g}}^{n}:=\myvec{U}^{n}\cdot\nabla\myvec{U}^{n}-\myvec{u}^{n}\cdot\nabla\myvec{u}^{n}
		=\tilde{\myvec{u}}^{n}\cdot\nabla\myvec{u}^{n}+\myvec{U}^{n}\cdot\nabla\tilde{\myvec{u}}^{n}.
	\end{align}
	Then it is easy to derive the error equations with the no-slip  boundary condition $\tilde{\myvec{u}}^{n}=0$ on $\partial\Omega$,
	\begin{align}	
		\myinner{\nabla \tilde{p}^{n},\nabla \phi}=&\,\myinner{
			\nu\Delta\tilde{\myvec{u}}^{n}-\nu\nabla\nabla\cdot\tilde{\myvec{u}}^{n}-\tilde{\myvec{g}}^{n},\nabla \phi}
		\quad \text{for $\forall\phi\in H^1(\Omega)$ and $n\ge0$,}
		\label{scheme: perturbed multistep-pressure Poisson}\\
		\sum_{j=\rmk}^na_{n-j}^{(\rmk)}\partial_{\tau}\tilde{\myvec{u}}^{j}
		=&\,\nu\sum_{j=\rmk}^{n}b_{n-j}^{(\rmk)}\Delta \tilde{\myvec{u}}^{j}
		-\sum_{j=\rmk}^{n}c_{n-j}^{(\rmk)}(\nabla \tilde{p}^{j-1}+\tilde{\myvec{g}}^{j-1})-\tilde{\myvec{u}}_{\mathrm{sta}}^{(\rmk,n)}+\myvec{d}^{n}
		\label{scheme: perturbed multistep-momentum}
	\end{align}
	for $n\ge\rmk$ with the starting error term $\tilde{\myvec{u}}_{\mathrm{sta}}^{(\rmk,n)}$ defined by
	\begin{align}\label{scheme: general imex multistep convolution-initial error}	
		\tilde{\myvec{u}}_{\mathrm{sta}}^{(\rmk,n)}:=
		\sum_{j=1}^{\rmk-1}a_{n-j}^{(\rmk)}\partial_{\tau}\tilde{\myvec{u}}^{j}
		-\nu\sum_{j=1}^{\rmk-1}b_{n-j}^{(\rmk)}\Delta\tilde{\myvec{u}}^{j}
		+\sum_{j=1}^{\rmk-1}c_{n-j}^{(\rmk)}
		(\nabla \tilde{p}^{j-1}+\tilde{\myvec{g}}^{j-1}),
	\end{align}
	which vanishes for $n\ge 2\rmk-1$.
	Following the derivation of \eqref{scheme: general multistep-momentum-differential}, it is not difficult to derive the equivalent convolution form of the error equation \eqref{scheme: perturbed multistep-momentum},
	\begin{align} \label{scheme: perturbed multistep-momentum-differential}
		\partial_{\tau}\tilde{\myvec{u}}^{i}
		=&\,\nu\sum_{\ell=\rmk}^{i}\hat{b}_{i-\ell}^{(\rmk)}\Delta\tilde{\myvec{u}}^{\ell}
		-\sum_{\ell=\rmk}^{i}\hat{c}_{i-\ell}^{(\rmk)}(\nabla \tilde{p}^{\ell-1}+\tilde{\myvec{g}}^{\ell-1})
		+\sum_{\ell=\rmk}^{i}a_{i-\ell}^{(-1,\rmk)}\myvec{R}^{\ell}\quad\text{for $i\ge\rmk$},
	\end{align}
	where $\myvec{R}^{\ell}=\myvec{d}^{\ell}-\tilde{\myvec{u}}_{\mathrm{sta}}^{(\rmk,\ell)}$ for $\ell\ge\rmk$.
	The time truncation error $\myvec{d}^{n}=O(\tau)$ according to the solution regularity. Assume further that the starting solutions $(\myvec{u}^{j},p^{j})$ for $1\le j\le \rmk-1$ are accurate enough and there exists a constant $\tilde{\ck}_{\myvec{u}}>0$ such that
	\begin{align}\label{eq: initial and trunction error estimate}	
		&\,\mynormb{\nabla\tilde{\myvec{u}}^{\rmk-1}}^2+\tau\mynormb{\Delta \tilde{\myvec{u}}^{\rmk-1}}^2
		+\tau \mynormb{\nabla \tilde{p}^{\rmk-1}}^2+\sum_{j=\rmk}^{n}\tau\mynormb{\myvec{R}^{j}}^2\le \tilde{\ck}_{\myvec{u}}^2\tau^2.
	\end{align}

		We consider the mathematical induction for the  error bound in the $\ell^{\infty}(H^1)\cap \ell^{2}(H^2)$ norm
		\begin{align}\label{error: NS multistep H2 norm bound}
			\mynormb{\nabla\tilde{\myvec{u}}^{\ell}}^2+\sum_{j=\rmk}^\ell\tau\mynormb{\Delta\tilde{\myvec{u}}^{j}}^2\le 1
			\quad\text{for $\rmk-1\le \ell\le N$}.
		\end{align}
		It holds for $\ell=\rmk-1$ if the time-step size $\tau\le 1/\tilde{\ck}_{\myvec{u}}$. It needs to verify the case $\ell=m$ $(m\ge \rmk)$ from the  induction hypothesis
		\begin{align}\label{error: NS multistep H2 norm induction hypothesis} 		
			\mynormb{\nabla\tilde{\myvec{u}}^{\ell}}^2+\sum_{j=\rmk}^\ell\tau\mynormb{\Delta\tilde{\myvec{u}}^{j}}^2\le 1
			\quad\text{for $\rmk-1\le \ell\le m-1$}.
		\end{align}
		The hypothesis and the regularity setting imply that
		\begin{align}\label{error: NS multistep hypothesis deduced uniform bound}
			\mynormb{\nabla\myvec{u}^{\ell}}^2+\sum_{j=\rmk}^\ell\tau\mynormb{\Delta\myvec{u}^{j}}^2\le(1+\ck_{\myvec{u}})^2
			\quad\text{for $\rmk-1\le \ell\le m-1$}.
		\end{align}

		To derive the error estimate at $t_m$, we present some estimates on the pressure gradient and the nonlinear term. According to the equivalent equation \eqref{conti: Stokes pressure} for the Stokes pressure $p_{\mathrm{s}}$, the pressure error equation \eqref{scheme: perturbed multistep-pressure Poisson} has the following form
		\begin{align*}	
			\myinnerb{\nabla \tilde{p}^{\ell},\nabla \phi}&\,=\myinnerb{\nu \nabla p_{\mathrm{s}}(\tilde{\myvec{u}}^{\ell})-\tilde{\myvec{g}}^{\ell},\nabla \phi}\quad\text{for $0\le\ell\le N$, $\forall\phi\in H^1(\Omega)$.}
		\end{align*}
		Taking $\phi:=\tilde{p}^{\ell}$ arrives at
		\begin{align}\label{stability: multistep-pressure gradient}
			\mynormb{\nabla \tilde{p}^{\ell}}\le
			\nu \mynormb{\nabla p_{\mathrm{s}}(\tilde{\myvec{u}}^{\ell})}+\mynormb{\tilde{\myvec{g}}^{\ell}}\quad\text{for $0\le \ell\le N$,}
		\end{align}
		and thus
		\begin{align}\label{stability: multistep-pressure gradient2}
			\mynormb{\nabla \tilde{p}^{\ell}+\tilde{\myvec{g}}^{\ell}}\le
			\nu \mynormb{\nabla p_{\mathrm{s}}(\tilde{\myvec{u}}^{\ell})}+2\mynormb{\tilde{\myvec{g}}^{\ell}}\quad\text{for $0\le \ell\le N$.}
		\end{align}
		By using Lemma \ref{lem: nonlinear convective bound}, we can bound the  convection error term by
		\begin{align}\label{error: multistep hypothesis deduced nonlinear bound}
			\mynormb{\tilde{\myvec{g}}^{j}}^2
			\le&\,2\mynormb{\tilde{\myvec{u}}^{j}\cdot\nabla\myvec{u}^{j}}^2
			+2\mynormb{\myvec{U}^{j}\cdot\nabla\tilde{\myvec{u}}^{j}}^2\notag\\
			\le&\,	2\ck_l\mynormb{\nabla\tilde{\myvec{u}}^{j}}^2
			\mynormb{\nabla\myvec{u}^{j}}\mynormb{\Delta\myvec{U}^{j}-\Delta\tilde{\myvec{u}}^{j}}
			+2\ck_l\mynormb{\nabla\myvec{U}^{j}}^2\mynormb{\nabla\tilde{\myvec{u}}^{j}}\mynormb{\Delta\tilde{\myvec{u}}^{j}}
			\notag\\
			\le&\,	2\ck_l\mynormb{\nabla\tilde{\myvec{u}}^{j}}^2
			\mynormb{\nabla\myvec{u}^{j}}\mynormb{\Delta\myvec{U}^{j}}+2\ck_l\mynormb{\nabla\tilde{\myvec{u}}^{j}}^2
			\mynormb{\nabla\myvec{u}^{j}}\mynormb{\Delta\tilde{\myvec{u}}^{j}}
			+2\ck_l\ck_{\myvec{u}}^2\mynormb{\nabla\tilde{\myvec{u}}^{j}}\mynormb{\Delta\tilde{\myvec{u}}^{j}}
			\notag\\
			\le&\,	2\ck_l(1+\ck_{\myvec{u}})\ck_{\myvec{u}}\mynormb{\nabla\tilde{\myvec{u}}^{j}}^2
			+2\ck_l(1+\ck_{\myvec{u}})^2\mynormb{\nabla\tilde{\myvec{u}}^{j}}\mynormb{\Delta\tilde{\myvec{u}}^{j}}
			\notag\\
			\leq&\,\ck_1\epsilon_2^{-1}\mynormb{\nabla\tilde{\myvec{u}}^{j}}^2
			+\epsilon_2\mynormb{\Delta\tilde{\myvec{u}}^{j}}^2
			\quad\text{for $\rmk-1\le j\le m-1$},
		\end{align}
		where  $\ck_1:=2\ck_l(1+\ck_{\myvec{u}})\ck_{\myvec{u}}\epsilon_2+\ck_l^2(1+\ck_{\myvec{u}})^4$ with a small constant $\epsilon_2>0$ to be determined.

		{ By testing the error equation \eqref{scheme: perturbed multistep-momentum-differential}  with $-2\tau\Delta \tilde{\myvec{u}}^{i}$,}
		and summing over $i$ from $i=\rmk$ to $m$, we have
		\begin{align}\label{H1 stability: multistep momentum-difference-product}
			&\,	\mynormb{\nabla\tilde{\myvec{u}}^{m}}^2-\mynormb{\nabla\tilde{\myvec{u}}^{\rmk-1}}^2
			+\sum_{i=\rmk}^{m}\mynormb{\nabla(\tilde{\myvec{u}}^{i}-\tilde{\myvec{u}}^{i-1})}^2+
			2\nu\tau\sum_{i=\rmk}^{m}\sum_{j=\rmk}^{i}\hat{b}_{i-j}^{(\rmk)}\myinnerb{\Delta \tilde{\myvec{u}}^{j},\Delta \tilde{\myvec{u}}^{i}}		\notag\\
			&\,\hspace{1cm}	=2\tau\sum_{i=\rmk}^{m}\sum_{j=\rmk}^{i}\hat{c}_{i-j}^{(\rmk)}\myinnerb{\nabla \tilde{p}^{j-1}+\tilde{\myvec{g}}^{j-1},\Delta \tilde{\myvec{u}}^{i}}
			-2\tau\sum_{i=\rmk}^{m}\sum_{j=\rmk}^{i}a_{i-j}^{(-1,\rmk)}\myinnerb{\myvec{R}^{j},\Delta \tilde{\myvec{u}}^{i}}.
		\end{align}
		Lemma \ref{lemma: bound quadratic form} (i) gives
		\begin{align*}
			&\,2\nu\tau\sum_{i=\rmk}^{m}\sum_{j=\rmk}^{i}\hat{b}_{i-j}^{(\rmk)}\myinnerb{\Delta \tilde{\myvec{u}}^{j},\Delta \tilde{\myvec{u}}^{i}}\ge 2\lambda_{\mathrm{I}}^{(\rmk)}\nu\sum_{i=\rmk}^{m}\tau\mynormb{\Delta \tilde{\myvec{u}}^{i}}^2.
		\end{align*}
		Applying Lemma \ref{lemma: bound quadratic form} (iii) and the pressure gradient estimate \eqref{stability: multistep-pressure gradient2}, the first term at the right hand side (RHS) of
		\eqref{H1 stability: multistep momentum-difference-product} can be bounded by
		\begin{align*}
			\textrm{RHS}_1\le&\,
			2\sigma_{\mathrm{E}}^{(\rmk)}\sqrt{\sum_{i=\rmk}^{m}\tau\mynormb{\nabla \tilde{p}^{i-1}+\tilde{\myvec{g}}^{i-1}}^2}
			\sqrt{\sum_{i=\rmk}^{m}\tau\mynormb{\Delta \tilde{\myvec{u}}^{i}}^2}\\
			\le &\,
			4\sigma_{\mathrm{E}}^{(\rmk)}\sqrt{\sum_{i=\rmk}^{m}\tau\mynormb{\tilde{\myvec{g}}^{i-1}}^2}
			\sqrt{\sum_{i=\rmk}^{m}\tau\mynormb{\Delta \tilde{\myvec{u}}^{i}}^2}\\
			&\, +2\sigma_{\mathrm{E}}^{(\rmk)}\nu \sqrt{\sum_{i=\rmk}^{m}\tau\mynorm{\nabla p_{\mathrm{s}}(\tilde{\myvec{u}}^{i-1})}^2}
			\sqrt{\sum_{i=\rmk}^{m}\tau\mynormb{\Delta \tilde{\myvec{u}}^{i}}^2},
		\end{align*}
		where the triangular inequality was used in the last step. Thus, by using the Young inequality and the convection error bound \eqref{error: multistep hypothesis deduced nonlinear bound}, the right hand side (RHS) of \eqref{H1 stability: multistep momentum-difference-product} can be bounded by
		\begin{align*}
			\textrm{RHS}\le &\,2\epsilon_3\lambda_{\mathrm{I}}^{(\rmk)}\sum_{i=\rmk}^{m}\tau \mynormb{\Delta \tilde{\myvec{u}}^{i}}^2
			+\frac{2(\sigma_{\mathrm{E}}^{(\rmk)})^2}{\epsilon_3\lambda_{\mathrm{I}}^{(\rmk)}}
			\sum_{i=\rmk}^{m}\tau \mynormb{\tilde{\myvec{g}}^{i-1}}^2\\
			&\,+\frac{(\sigma_{\mathrm{E}}^{(\rmk)})^2\nu}{\lambda_{\mathrm{I}}^{(\rmk)}}
			\sum_{i=\rmk}^{m}\tau \mynormb{\nabla p_{\mathrm{s}}(\tilde{\myvec{u}}^{i-1})}^2
			+\lambda_{\mathrm{I}}^{(\rmk)}\nu \sum_{i=\rmk}^{m}\tau \mynormb{\Delta \tilde{\myvec{u}}^{i}}^2\\
			&\,+\frac{(\sigma_{\mathrm{F}}^{(\rmk)})^2}{\epsilon_3\lambda_{\mathrm{I}}^{(\rmk)}}\sum_{i=1}^{m}\tau \mynormb{\myvec{R}^{i}}^2
			+\epsilon_3\lambda_{\mathrm{I}}^{(\rmk)}\sum_{i=\rmk}^{m}\tau \mynormb{\Delta \tilde{\myvec{u}}^{i}}^2\\
			\le&\,(\nu +3\epsilon_3)\lambda_{\mathrm{I}}^{(\rmk)}\sum_{i=\rmk}^{m}\tau \mynormb{\Delta \tilde{\myvec{u}}^{i}}^2	
			+\frac{2(\sigma_{\mathrm{E}}^{(\rmk)})^2\epsilon_2}{\epsilon_3\lambda_{\mathrm{I}}^{(\rmk)}}
			\sum_{i=\rmk}^{m}\tau \mynormb{\Delta\tilde{\myvec{u}}^{i-1}}^2
			\\
			&\,	
			+\frac{2\ck_1(\sigma_{\mathrm{E}}^{(\rmk)})^2}{\epsilon_2\epsilon_3\lambda_{\mathrm{I}}^{(\rmk)}}
			\sum_{i=\rmk}^{m}\tau \mynormb{\nabla\tilde{\myvec{u}}^{i-1}}^2
			+\frac{(\sigma_{\mathrm{E}}^{(\rmk)})^2\nu}{\lambda_{\mathrm{I}}^{(\rmk)}}
			(\tfrac12+\varepsilon)\sum_{i=\rmk}^{m}\tau \mynormb{\Delta\tilde{\myvec{u}}^{i-1}}^2		\\
			&\,+
			\frac{\ck_{\varepsilon}\nu(\sigma_{\mathrm{E}}^{(\rmk)})^2}{\lambda_{\mathrm{I}}^{(\rmk)}}
			\sum_{i=\rmk}^{m}\tau \mynormb{\nabla\tilde{\myvec{u}}^{i-1}}^2	+\frac{(\sigma_{\mathrm{F}}^{(\rmk)})^2}{\epsilon_3\lambda_{\mathrm{I}}^{(\rmk)}}
			\sum_{i=\rmk}^{m}\tau \mynormb{\myvec{R}^{i}}^2,
		\end{align*}
		where we use the Stokes pressure estimate from Lemma \ref{lem: Stokes pressure bound}
		(by taking  $0<\varepsilon<\frac18$)
		\begin{align*}
			\mynormb{\nabla p_{\mathrm{s}}(\tilde{\myvec{u}}^{\ell})}^2\le (\tfrac12+\varepsilon)\mynormb{\Delta\tilde{\myvec{u}}^{\ell}}^2
			+\ck_{\varepsilon}\mynormb{\nabla\tilde{\myvec{u}}^{\ell}}^2\quad\text{for $0\le \ell\le N$}.
		\end{align*}
		Then it follows from \eqref{H1 stability: multistep momentum-difference-product}  that
		\begin{align*}
			\mynormb{\nabla\tilde{\myvec{u}}^{m}}^2-&\,\mynormb{\nabla\tilde{\myvec{u}}^{\rmk-1}}^2
			+(\nu -3\epsilon_3)\lambda_{\mathrm{I}}^{(\rmk)}\tau\mynormb{\Delta \tilde{\myvec{u}}^{m}}^2
			-(\nu -3\epsilon_3)\lambda_{\mathrm{I}}^{(\rmk)}\tau\mynormb{\Delta \tilde{\myvec{u}}^{\rmk-1}}^2 \notag\\
			+&\,\kbra{(\nu -3\epsilon_3)-\brab{	(\tfrac12+\varepsilon)\nu+2\epsilon_2\epsilon_3^{-1}}\frac{(\sigma_{\mathrm{E}}^{(\rmk)})^2}{(\lambda_{\mathrm{I}}^{(\rmk)})^2} }\lambda_{\mathrm{I}}^{(\rmk)}\sum_{i=\rmk}^{m-1}\tau \mynormb{\Delta\tilde{\myvec{u}}^{i}}^2\notag\\
			\le&\,
			\brab{\frac{2\ck_1}{\epsilon_2}
				+\ck_{\varepsilon}\epsilon_3\nu}
			\frac{(\sigma_{\mathrm{E}}^{(\rmk)})^2}{\epsilon_3\lambda_{\mathrm{I}}^{(\rmk)}}
			\sum_{i=\rmk}^{m}\tau \mynormb{\nabla\tilde{\myvec{u}}^{i-1}}^2
			+\frac{(\sigma_{\mathrm{F}}^{(\rmk)})^2}{\epsilon_3\lambda_{\mathrm{I}}^{(\rmk)}}
			\sum_{i=\rmk}^{m}\tau \mynormb{\myvec{R}^{i}}^2.
		\end{align*}
		Under the priori setting $\mathfrak{I}_{\mathrm{IE}}^{(\rmk)}>\frac{\sqrt{2}}{2}$, one can choose a small $\varepsilon:=\frac{(\lambda_{\mathrm{I}}^{(\rmk)})^2}{4(\sigma_{\mathrm{E}}^{(\rmk)})^2}-\frac18\in(0,\frac18)$  and $\epsilon_2:=\epsilon_3\varepsilon\nu/2$ such that
		$$(\nu -3\epsilon_3)-\brab{	(\tfrac12+\varepsilon)\nu+2\epsilon_2\epsilon_3^{-1}}\frac{(\sigma_{\mathrm{E}}^{(\rmk)})^2}{(\lambda_{\mathrm{I}}^{(\rmk)})^2}= \nu-3\epsilon_3-\frac{\tfrac12+2\varepsilon}{\frac12+4\varepsilon}\nu
		=\frac{2\varepsilon\nu}{\frac12+4\varepsilon}-3\epsilon_3.$$
		By taking $\epsilon_3:=\frac{\varepsilon\nu}{1+8\varepsilon}$, one has
		$$(\nu -3\epsilon_3)-\brab{	(\tfrac12+\varepsilon)\nu+2\epsilon_2\epsilon_3^{-1}}\frac{(\sigma_{\mathrm{E}}^{(\rmk)})^2}{(\lambda_{\mathrm{I}}^{(\rmk)})^2}=\epsilon_3\quad\text{and}\quad\nu -3\epsilon_3=\frac{\frac12+\tfrac52\varepsilon}{\frac12+4\varepsilon}\nu>\epsilon_3.$$
		Thus, with the constant $\ck_2:=\frac{4\ck_1}{\epsilon_3\varepsilon\nu}
		+\ck_{\varepsilon}\epsilon_3\nu$, we have
		\begin{align*}
			\mynormb{\nabla\tilde{\myvec{u}}^{m}}^2
			&\,	+\epsilon_3\lambda_{\mathrm{I}}^{(\rmk)}\sum_{i=\rmk}^{m}\tau \mynormb{\Delta \tilde{\myvec{u}}^{i}}^2
			\le\mynormb{\nabla\tilde{\myvec{u}}^{\rmk-1}}^2+\lambda_{\mathrm{I}}^{(\rmk)}\nu\tau\mynormb{\Delta \tilde{\myvec{u}}^{\rmk-1}}^2\\
			&\,+\frac{\ck_2(\sigma_{\mathrm{E}}^{(\rmk)})^2}{\epsilon_3\lambda_{\mathrm{I}}^{(\rmk)}}
			\sum_{i=\rmk-1}^{m-1}\tau \mynormb{\nabla\tilde{\myvec{u}}^{i}}^2
			+\frac{(\sigma_{\mathrm{F}}^{(\rmk)})^2}{\epsilon_3\lambda_{\mathrm{I}}^{(\rmk)}}\sum_{i=\rmk}^{m}\tau \mynormb{\myvec{R}^{i}}^2.
		\end{align*}
		The standard discrete Gr\"{o}nwall inequality yields
		\begin{align*}
			\mynormb{\nabla\tilde{\myvec{u}}^{m}}^2+
			\epsilon_3\lambda_{\mathrm{I}}^{(\rmk)}\sum_{i=\rmk}^{m}\tau \mynormb{\Delta \tilde{\myvec{u}}^{i}}^2
			&\,\le\frac{(\sigma_{\mathrm{F}}^{(\rmk)})^2}{\epsilon_3\lambda_{\mathrm{I}}^{(\rmk)}}
			\exp\braB{\frac{\ck_2(\sigma_{\mathrm{E}}^{(\rmk)})^2}{\epsilon_3\lambda_{\mathrm{I}}^{(\rmk)}} t_{m-\rmk+1}}
			\\&\,*\braB{\mynormb{\nabla\tilde{\myvec{u}}^{\rmk-1}}^2+\lambda_{\mathrm{I}}^{(\rmk)}\nu\tau\mynormb{\Delta \tilde{\myvec{u}}^{\rmk-1}}^2+\sum_{i=\rmk}^{m}\tau \mynormb{\myvec{R}^{i}}^2},
		\end{align*}
		and then, by using the prior setting \eqref{eq: initial and trunction error estimate},
		\begin{align}\label{error: multistep H1 norm}
			\mynormb{\nabla\tilde{\myvec{u}}^{m}}^2+
			\epsilon_3\lambda_{\mathrm{I}}^{(\rmk)}\sum_{i=\rmk}^{m}\tau \mynormb{\Delta \tilde{\myvec{u}}^{i}}^2\le \frac{(\sigma_{\mathrm{F}}^{(\rmk)})^2}{\epsilon_3\lambda_{\mathrm{I}}^{(\rmk)}}
			\tilde{\ck}_{\myvec{u}}^2\exp\braB{\frac{(\sigma_{\mathrm{E}}^{(\rmk)})^2}{\epsilon_3\lambda_{\mathrm{I}}^{(\rmk)}}\ck_2T_*}\tau^2.
		\end{align}
		It implies that, since $\epsilon_3\lambda_{\mathrm{I}}^{(\rmk)}<1$ according to Lemma \ref{lem: IELM stability intensity upper bound},
		\begin{align*}
			\mynormb{\nabla\tilde{\myvec{u}}^{m}}^2+
			\sum_{i=\rmk}^{m}\tau \mynormb{\Delta \tilde{\myvec{u}}^{i}}^2\le \ck_3^2\tau^2,
		\end{align*}
		where the constant $\ck_3:=\frac{\sigma_{\mathrm{F}}^{(\rmk)}}{\epsilon_3\lambda_{\mathrm{I}}^{(\rmk)}}
		\tilde{\ck}_{\myvec{u}}
		\exp\braB{\frac{(\sigma_{\mathrm{E}}^{(\rmk)})^2}{2\epsilon_3\lambda_{\mathrm{I}}^{(\rmk)}}\ck_2T_*}$.
		By choosing the small time-step size $\tau\le 1/\ck_3$,
		we recover the desired $\ell^{\infty}(H^1)\cap \ell^{2}(H^2)$ norm error bound \eqref{error: NS multistep H2 norm bound}  holds for $\ell=m$ and
		complete the mathematical induction.
		
		As a byproduct, the error bound \eqref{error: multistep H1 norm} holds for any $\rmk\le m\le N$.	
		By using the regularity assumption of solution, it is easy to obtain the following stability estimate
		\begin{align}\label{error: multistep H2 norm stability1}
			\mynormb{\nabla\myvec{u}^{m}}^2+\sum_{j=\rmk}^m\tau\mynormb{\Delta\myvec{u}^{j}}^2\le(1+\ck_{\myvec{u}})^2\quad\text{for $\rmk\le m\le N$.}
		\end{align}
	By using the pressure gradient estimate \eqref{stability: multistep-pressure gradient}, Lemma \ref{lem: Stokes pressure bound}
		and  the convection error bound \eqref{error: multistep hypothesis deduced nonlinear bound}, which is also valid for $\ell=m$ according to \eqref{error: multistep H2 norm stability1}, one applies the error estimate \eqref{error: multistep H1 norm} to obtain
		\begin{align*}
			&\,\sum_{i=\rmk}^{m}\tau \mynormb{\nabla \tilde{p}^{i}}^2
			\le
			2\nu\sum_{i=\rmk}^{m}\tau  \mynormb{\nabla p_{\mathrm{s}}(\tilde{\myvec{u}}^{i})}^2+2\sum_{i=\rmk}^{m}\tau \mynormb{\tilde{\myvec{g}}^{i}}^2\\
			&\,\hspace{1.3cm}\le\bra{\nu+2\varepsilon\nu+\epsilon_3\varepsilon\nu}\sum_{i=\rmk}^{m}\tau \mynormb{\Delta\tilde{\myvec{u}}^{i}}^2
			+2(\nu\ck_{\varepsilon}+\tfrac{2\ck_1}{\epsilon_3\varepsilon\nu})\sum_{i=\rmk}^{m}\tau \mynormb{\nabla\tilde{\myvec{u}}^{i}}^2\notag\\
			&\,\hspace{1.3cm}\le \kbra{\bra{\nu+2\varepsilon\nu+\epsilon_3\varepsilon\nu}+2(\nu\ck_{\varepsilon}+\tfrac{2\ck_1}{\epsilon_3\varepsilon\nu})T_*}\ck_3^2\tau^2\quad\text{for $1\le m\le N$.}\notag
		\end{align*}
		Thus, $\sum_{i=\rmk}^{m}\tau \mynormb{\nabla \tilde{p}^{i}}^2$ and  $\sum_{i=\rmk}^{m}\tau \mynormb{\nabla p^{i}}^2$  in the $\ell^{2}(H^1)$ norm are also bounded for $\rmk\le m\le N$ if the time-step size $\tau\le 1/\ck_3$. It completes the proof.
	\end{proof}
	
	By imposing further regularity in time, one has the following convergence result.
	
	\begin{theorem}\label{thm: NS multistep convergence}
		Assume that there exists a finite time $T_*>0$ such that the solution pair $(\myvec{u},p)$ of the  INSE model \eqref{cont: INSE-momentum}-\eqref{cont: INSE-incompressible} fulfills $\myvec{u}\in {C([0,T_*];H_0^1(\Omega)\cap H^2(\Omega))}$,  $\partial_{t}^{(\rmk+1)}\myvec{u}\in {L^2([0,T_*];L^2(\Omega))}$ and $\partial_{t}^{(\rmk)} p\in {L^2([0,T_*];H^1(\Omega))}$ for some integer $\rmk\ge1$.
		Assume further that the $\rmk$-step IELM method \eqref{scheme: general multistep-momentum} is consistent with an order of $O(\tau^{\rmk})$  and satisfies the assumptions of Lemma \ref{lemma: bound quadratic form}. If the controllability intensity $\mathfrak{I}_{\mathrm{IE}}^{(\rmk)}>\frac{\sqrt{2}}{2}$ and the step size $\tau$ is sufficiently small, the solution pairs $(\myvec{u}^{n},p^n)$ of the consistent splitting IELM methods \eqref{scheme: general multistep-pressure Poisson}-\eqref{scheme: general multistep-momentum} for the INSE model \eqref{cont: INSE-momentum}-\eqref{cont: INSE-incompressible} are unconditionally stable and convergent with an order of $O(\tau^{\rmk})$, provided the starting solutions $(\myvec{u}^{j},p^{j})$ for $1\le j\le \rmk-1$ are sufficiently accurate.
	\end{theorem}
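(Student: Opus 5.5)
The plan is to reuse the stability argument of Theorem \ref{thm: NS multistep stability} essentially verbatim. The only new ingredient is a sharper bound on the consistency residual $\myvec{R}^{\ell}=\myvec{d}^{\ell}-\tilde{\myvec{u}}_{\mathrm{sta}}^{(\rmk,\ell)}$. The proof of Theorem \ref{thm: NS multistep stability} is linear in the data: \eqref{error: multistep H1 norm} bounds $\mynormb{\nabla\tilde{\myvec{u}}^{m}}^2+\epsilon_3\lambda_{\mathrm{I}}^{(\rmk)}\sum_{i=\rmk}^m\tau\mynormb{\Delta\tilde{\myvec{u}}^{i}}^2$ by an $\exp(\ck T_*)$ multiple of the quantity on the left of \eqref{eq: initial and trunction error estimate}. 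Replacing $\tau^2$ there by $\tau^{2\rmk}$ therefore gives an $O(\tau^{\rmk})$ error in $\ell^\infty(H^1)\cap\ell^2(H^2)$.

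\textbf{Step 1 (truncation error).} Write $\myvec{d}^n$ as the defect when $(\myvec{U}^n,P^n)$ is inserted into \eqref{scheme: general multistep-momentum}. Using the exact equation $\partial_t\myvec{U}=\nu\Delta\myvec{U}-\nabla P-\myvec{g}(\myvec{U})$, split $\myvec{d}^n$ into three parts: the implicit defect $\sum_j a_j^{(\rmk)}\partial_\tau\myvec{U}^{n-j}-\nu\sum_j b_j^{(\rmk)}\Delta\myvec{U}^{n-j}$, and the extrapolation defects $\sum_j c_j^{(\rmk)}\nabla P^{n-j-1}-\nabla P^n$ and $\sum_j c_j^{(\rmk)}\myvec{g}^{n-j-1}-\myvec{g}^n$.

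Order-$\rmk$ consistency means that each part annihilates polynomials of degree $\le\rmk$. A Peano-kernel (Taylor with integral remainder) representation then gives
\[
\tau\mynormb{\myvec{d}^n}^2\le C\tau^{2\rmk}\int_{t_{n-\rmk}}^{t_n}\Big(\mynormb{\partial_t^{(\rmk+1)}\myvec{u}}^2+\mynormb{\partial_t^{(\rmk)}\nabla p}^2+\mynormb{\partial_t^{(\rmk)}\myvec{g}}^2\Big)\zd t .
\]
Summing over $n$ yields $\sum_n\tau\mynormb{\myvec{d}^n}^2\le C\tau^{2\rmk}$. Bounding the $\partial_t^{(\rmk)}\myvec{g}$ term requires $\partial_t^{(\rmk)}(\myvec{u}\cdot\nabla\myvec{u})\in L^2(L^2)$. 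This follows from the stated regularity via Leibniz's rule and Lemma \ref{lem: nonlinear convective bound}, possibly after tacitly assuming the intermediate time derivatives are bounded in $H^2$. The same issue arises with the $\Delta\myvec{U}$ part of the implicit defect, which needs $\partial_t^{(\rmk)}\Delta\myvec{u}$; I would absorb it by expressing $\nu\Delta\myvec{U}$ through the PDE, or simply add this regularity. I expect this bookkeeping to be the main obstacle, since the stated hypotheses are minimal.

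\textbf{Step 2 (starting errors).} "Sufficiently accurate" starting values will be interpreted as
\[
\mynormb{\nabla\tilde{\myvec{u}}^{j}}^2+\tau\mynormb{\Delta\tilde{\myvec{u}}^{j}}^2+\tau\mynormb{\nabla\tilde{p}^{j}}^2+\tau\mynormb{\partial_\tau\tilde{\myvec{u}}^{j}}^2\le C\tau^{2\rmk},\qquad 1\le j\le\rmk-1 .
\]
Since $\tilde{\myvec{u}}_{\mathrm{sta}}^{(\rmk,n)}$ vanishes for $n\ge2\rmk-1$ and involves only these finitely many starting errors (plus $\tilde{\myvec{g}}^{j-1}$, controlled by Lemma \ref{lem: nonlinear convective bound}), this gives $\sum_{\ell=\rmk}^{n}\tau\mynormb{\myvec{R}^{\ell}}^2\le C\tau^{2\rmk}$. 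Hence \eqref{eq: initial and trunction error estimate} holds with $\tau^{2\rmk}$ in place of $\tau^2$.

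\textbf{Step 3 (induction and pressure).} Rerun the induction \eqref{error: NS multistep H2 norm bound}. The parameter choices $\varepsilon,\epsilon_2,\epsilon_3$ enabled by $\mathfrak{I}_{\mathrm{IE}}^{(\rmk)}>\frac{\sqrt2}{2}$ are unchanged, and so is the Gr\"onwall step. The result is $\mynormb{\nabla\tilde{\myvec{u}}^{m}}^2+\sum_{i=\rmk}^m\tau\mynormb{\Delta\tilde{\myvec{u}}^{i}}^2\le\ck_3^2\tau^{2\rmk}$, which closes the induction as soon as $\tau^{\rmk}\le1/\ck_3$. Finally, the pressure bound at the end of the previous proof, via \eqref{stability: multistep-pressure gradient}, Lemma \ref{lem: Stokes pressure bound} and \eqref{error: multistep hypothesis deduced nonlinear bound}, gives $\sum_i\tau\mynormb{\nabla\tilde{p}^{i}}^2\le C\tau^{2\rmk}$. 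This yields stability and $O(\tau^{\rmk})$ convergence for both velocity and pressure.
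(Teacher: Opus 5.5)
Your overall architecture matches the paper's intent. The paper gives no separate proof: Theorem~\ref{thm: NS multistep convergence} is Theorem~\ref{thm: NS multistep stability} rerun with $\tau^{2\rmk}$ in place of $\tau^2$ in \eqref{eq: initial and trunction error estimate}. Your Steps 2--3 go through as you describe them.

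The gap is in Step 1. You split $\myvec{d}^n$ around the single time $t_n$ and claim each piece is separately of order $\rmk$. This fails for exactly the schemes the theorem is meant for. Take GBDF2 with $\beta=3$, which is certified since $\mathfrak{I}_{\mathrm{IE}}^{(2)}=5/7>\sqrt{2}/2$. Then $\vec{c}_{\mathrm{G}}^{(2)}=(4,-3)$ and $4\nabla P^{n-1}-3\nabla P^{n-2}-\nabla P^{n}=2\tau\,\partial_t\nabla p(t_n)+O(\tau^2)$, because GBDF extrapolates to $t_{n-1+\beta}$, not to $t_n$. Your implicit defect is then also only $O(\tau)$, and the two first-order terms cancel only in the sum.

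Moving the reference point to $t_{n-1+\beta}$ rescues GBDF, but not SIELM-$\rmk$ with $\rmk\ge3$ and $\gamma\neq1$. There $\varrho_{b,\mathrm{S}}^{(\rmk)}(\zeta)/\zeta=\sum_j\tbinom{\rmk-1}{j}\gamma^j(\zeta-1)^j$ is not of the Lagrange-interpolation form $\sum_j\tbinom{s}{j}(\zeta-1)^j$ for any $s$; matching the $j=1,2$ coefficients forces $\gamma=1$. So no single-point split exists. Even where one does exist, the extrapolation pieces are exact only up to degree $\rmk-1$, not $\rmk$.

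The repair is the substitution you mention in passing, but it must be made at every node, and it is forced rather than optional. Let $\myvec{F}:=\nabla p+\myvec{u}\cdot\nabla\myvec{u}-\myvec{f}$ along the exact solution, so that $\nu\Delta\myvec{U}^{n-j}=\partial_t\myvec{u}(t_{n-j})+\myvec{F}(t_{n-j})$. Then
\[
\myvec{d}^n=\Big[\sum_{j}a_j^{(\rmk)}\partial_\tau\myvec{U}^{n-j}-\sum_{j}b_j^{(\rmk)}\partial_t\myvec{u}(t_{n-j})\Big]-\Big[\sum_{j}b_j^{(\rmk)}\myvec{F}(t_{n-j})-\sum_{j}c_j^{(\rmk)}\myvec{F}(t_{n-j-1})\Big].
\]
The first bracket vanishes on polynomials of degree $\le\rmk$, because $(\zeta-1)\varrho_a(\zeta)-\varrho_b(\zeta)\ln\zeta=O((\zeta-1)^{\rmk+1})$. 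The second vanishes on polynomials of degree $\le\rmk-1$, because $\varrho_b(\zeta)-\varrho_c(\zeta)=O((\zeta-1)^{\rmk})$; for SIELM this difference is exactly $\gamma^{\rmk-1}(\zeta-1)^{\rmk}$. Peano kernels then yield your displayed bound, and $\partial_t^{(\rmk)}\Delta\myvec{u}$ never appears.

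What remains is the requirement $\partial_t^{(\rmk)}(\myvec{u}\cdot\nabla\myvec{u})\in L^2(L^2)$. Contrary to your remark, this does not follow from the stated hypotheses: the Leibniz term $\myvec{u}\cdot\nabla\partial_t^{(\rmk)}\myvec{u}$ alone needs $\partial_t^{(\rmk)}\myvec{u}\in L^2(H^1)$. It has to be assumed, which is what the paper's phrase ``further regularity in time'' tacitly does.
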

	
	\section{Two classes of IELM methods}\label{sec: existing IELM}
	\setcounter{equation}{0}
	
	Among the four parameterized classes of IELM methods in \cite[Section 4]{LiaoQuanTangZhou:IMES}, the GBDF-$\rmk$ ($2\le \rmk\le5$) and SIELM-$\rmk$ ($2\le \rmk\le9$) schemes can theoretically maintain the unconditional stability of the consistent splitting IELM methods \eqref{scheme: general multistep-pressure Poisson}-\eqref{scheme: general multistep-momentum} because the associated controllability intensities $\mathfrak{I}_{\mathrm{IE}}^{(\rmk)}$ fulfill the stability requirement of Theorem \ref{thm: NS multistep stability}.
	
	\subsection{$\beta$-parameterized GBDF-$\rmk$ methods}
	The $\beta$-parameterized GBDF methods \cite{HuangShen:2024} are constructed by approximating each term of the differential equations at the off-set grid point $t_*:=t_{n-1+\beta}$  $(\beta\ge1)$, 
	\begin{align*}
		\sum_{j=0}^{\rmk-1}a_{\mathrm{G},j}^{(\rmk)}\partial_{\tau}u^{n-j}\approx u'(t_*),\quad
		\sum_{j=0}^{\rmk-1}b_{\mathrm{G},j}^{(\rmk)}u^{n-j}\approx u(t_*),\quad
		\sum_{j=0}^{\rmk-1}c_{\mathrm{G},j}^{(\rmk)}u^{n-j-1}\approx u(t_*).
	\end{align*}
	That is, the discrete coefficients $a_{\mathrm{G},j}^{(\rmk)}$, $b_{\mathrm{G},j}^{(\rmk)}$ and $c_{\mathrm{G},j}^{(\rmk)}$  for $0\le j\le \rmk-1$ can be determined independently
	by three linear algebraic systems of Vandermonde-type. 
	We recall the GBDF-$\rmk$ ($2\le \rmk\le5$) methods with the coefficient vectors as follows:
	\begin{itemize}[itemindent=0.4cm,leftmargin=1.5cm]
		\item[(GBDF2)]: $\vec{a}_{\mathrm{G}}^{(2)}=(\tfrac{1}{2}+\beta,\tfrac{1}{2}-\beta)$,
		$\vec{b}_{\mathrm{G}}^{(2)}=(\beta,1-\beta)$,
		$\vec{c}_{\mathrm{G}}^{(2)}=(\beta +1,-\beta)$;
		\item[(GBDF3)]: $\vec{a}_{\mathrm{G}}^{(3)}
		=\brab{\tfrac{3 \beta ^2+6 \beta +2}{6},\tfrac{-6 \beta ^2-6 \beta +5}{6},\tfrac{3 \beta ^2-1}{6}},$
		$\vec{b}_{\mathrm{G}}^{(3)}=\brab{\tfrac{\beta ^2+\beta}{2} ,1-\beta ^2,\tfrac{\beta ^2-\beta}{2}}$,\\
		$\vec{c}_{\mathrm{G}}^{(3)}=(\tfrac{\beta ^2+3 \beta +2}{2},-2\beta-\beta ^2,\tfrac{\beta ^2+\beta}{2})$;		
		\item[(GBDF4)]:\\$\vec{a}_{\mathrm{G}}^{(4)}
		=\brab{\tfrac{2 \beta ^3+9 \beta ^2+11 \beta +3}{12},\tfrac{-6 \beta ^3-21\beta ^2-9 \beta +13}{12}, \tfrac{6 \beta^3+15 \beta^2-3\beta-5}{12}, \tfrac{-2 \beta^3-3 \beta^2+\beta+1}{12}},$\\
		$\vec{b}_{\mathrm{G}}^{(4)}=\brab{\tfrac{\beta^3+3 \beta^2+2 \beta}{6},\tfrac{-\beta^3-2 \beta^2+\beta +2}{2},\tfrac{\beta^3+\beta^2 -2\beta}{2} ,\tfrac{\beta -\beta^3}{6}},$\\
		$\vec{c}_{\mathrm{G}}^{(4)}=\brab{\tfrac{\beta^3+6 \beta^2+11 \beta +6}{6}, \tfrac{-\beta^3-5 \beta^2 -6\beta}{2}, \tfrac{\beta^3+4\beta^2 +3\beta}{2}, \tfrac{-\beta^3-3 \beta^2-2\beta}{6} }$;
		\item[(GBDF5)]:
		{\begin{align*}
				\vec{a}_{\mathrm{G}}^{(5)}
				=&\,\big(\tfrac{5\beta ^4+40\beta ^3+105 \beta ^2+100 \beta+24}{120},\tfrac{-10 \beta ^4-70 \beta ^3-135 \beta ^2-25 \beta +77}{60},\big.\\
				&\,\big.\tfrac{15\beta ^4+90 \beta ^3+120 \beta ^2-45\beta-43}{60},
				\tfrac{-10 \beta ^4-50 \beta ^3-45 \beta ^2+25 \beta +17}{60},\tfrac{5 \beta ^4+20 \beta ^3+15 \beta ^2-10 \beta -6}{120}\big),\notag\\
				\vec{b}_{\mathrm{G}}^{(5)}=&\,\big(\tfrac{\beta(\beta ^3+6 \beta ^2+11 \beta +6)}{24},\tfrac{-\beta ^4-5 \beta ^3-5 \beta ^2+5 \beta +6}{6},\tfrac{\beta(\beta ^3+4 \beta ^2+\beta -6)}{4},\tfrac{\beta (-\beta ^3-3 \beta ^2+\beta +3)}{6}, \tfrac{\beta(\beta ^3+2 \beta ^2-\beta -2)}{24}\big),\notag\\
				\vec{c}_{\mathrm{G}}^{(5)}=&\,\big(\tfrac{\beta ^4+10 \beta ^3+35 \beta ^2+50 \beta +24}{24},
				-\tfrac{\beta  (\beta ^3+9 \beta ^2+26 \beta +24)}{6},\big.\notag\\
				&\,\qquad\big.\tfrac{\beta(\beta ^3+8 \beta ^2+19 \beta +12)}{4},
				-\tfrac{\beta(\beta ^3+7 \beta ^2+14 \beta +8)}{6},
				\tfrac{\beta (\beta ^3+6 \beta ^2+11 \beta +6)}{24}\big).\notag
		\end{align*}}
	\end{itemize}

\lan{It is easy to get the associated three characteristic polynomials
	\begin{align*}
		&\,\varrho_{a,\mathrm{G}}^{(\rmk)}(\zeta):=\sum_{j=0}^{\rmk-1}a^{(\rmk)}_{\mathrm{G},j}\zeta^{\rmk-1-j}=\sum_{j=1}^{\rmk}\frac{f_{\mathrm{G}}^{(j)}(1)}{j!}(\zeta-1)^{j-1}\quad \text{with}\quad f_{\mathrm{G}}(z)=z^{\rmk-1+\beta}\ln z,\\
		&\,\varrho_{b,\mathrm{G}}^{(\rmk)}(\zeta):=\sum_{j=0}^{\rmk-1}\tbinom{\rmk-2+\beta}{j}(\zeta-1)^j,\qquad
		\varrho_{c,\mathrm{G}}^{(\rmk)}(\zeta):=\sum_{j=0}^{\rmk-1}\tbinom{\rmk-1+\beta}{j}(\zeta-1)^j,
	\end{align*}
	where $\tbinom{m}{j}:=\frac{m(m-1)\cdots(m-j+1)}{j!}$ denotes the usual binomial coefficient. It is not difficulty to check that $\varrho_{a,\mathrm{G}}^{(\rmk)}(\zeta)$  is a Schur polynomial for $\rmk=2,3,4$ and 5 if  $\beta>0$, $\beta>\frac{\sqrt{21}-3}{6}$, $\beta>\sqrt{2}-1$ and $\beta>0.50969$, respectively. They ensure that the implicit parts of the GBDF-$\rmk$ ($2\le \rmk\le5$) methods are strongly $A(0)$ stable if $\beta\ge1$, while the implicit part of the GBDF-6 method is not strongly $A(0)$ stable for $\beta\ge2$, see \cite[Section 4]{LiaoQuanTangZhou:IMES}.}

	\begin{table}[htb!]
		\centering
		\begin{threeparttable}
			\centering
			\caption{Stability property of GBDF-$\rmk$ methods for $\beta\ge1$}
			\vspace*{0.3pt}
			\def\temptablewidth{1\textwidth}
			\label{table: controllability intensity GBDF}
			\begin{tabular*}{\temptablewidth}{@{\extracolsep{\fill}}ccccc}
				\toprule
				$\rmk$	&$\sigma_{\mathrm{F},\mathrm{G}}^{(\rmk,\beta)}$&$\sigma_{\mathrm{E},\mathrm{G}}^{(\rmk,\beta)}$ &$\lambda_{\mathrm{I},\mathrm{G}}^{(\rmk,\beta)}$
				&$\mathfrak{I}_{\mathrm{IE},\mathrm{G}}^{(\rmk,\beta)}$\\  \midrule		
				$2$  &1 	& $\frac{2\beta+1}{2\beta}$& $\frac{2\beta-1}{2\beta}$
				&$\tfrac{2\beta-1}{2\beta +1}$\\[4pt]
				$3$  &1& $\frac{6 \beta ^2+12 \beta +3}{6 \beta ^2+6 \beta -2}$& $\frac{6 \beta ^2-4}{6 \beta ^2+6 \beta -2}$
				& $\tfrac{6\beta ^2-4}{6\beta ^2+12\beta +3}$  \\[4pt]
				$4$ &$\frac{11\beta-1}{10\beta}$	&$\frac{4 \beta ^3+19\beta ^2+20 \beta +3}{4 (\beta ^3+3 \beta ^2+\beta -1)}$
				& $\frac{4 \beta ^3+5 \beta ^2-4 \beta -3}{4 (\beta ^3+3 \beta ^2+\beta -1)}$
				&  $\tfrac{4 \beta ^3+5 \beta ^2-4 \beta -3}{4 \beta ^3+19 \beta ^2+20 \beta+3}$ \\[4pt]
				$5^*$  &$\frac{20\beta-1}{10\beta}$& $\frac{5(2 \beta ^4+30 \beta ^3+32 \beta ^2+38 \beta +5)}{10 \beta ^4+60 \beta ^3+90 \beta ^2-32}$& $\frac{5(2 \beta ^4+\beta ^3+4 \beta ^2-4 \beta -2)}{10 \beta ^4+60 \beta ^3+90 \beta ^2-32}$
				& $\frac{2 \beta ^4+\beta ^3+4 \beta ^2-4 \beta -2}{2 \beta ^4+30 \beta ^3+32 \beta ^2+38 \beta +5}$\\[4pt]
				$5^\star$  &$\frac{20\beta-1}{10\beta}$& $\frac{5 (2 \beta ^4+21 \beta ^3+29 \beta ^2+38 \beta +5)}{10 \beta ^4+60 \beta ^3+90 \beta ^2-32}$& $\frac{5 (2 \beta ^4+3 \beta ^3+25 \beta ^2-9 \beta -3)}{10 \beta ^4+60 \beta ^3+90 \beta ^2-32}$
				& $\frac{2 \beta ^4+3 \beta ^3+25 \beta ^2-9 \beta -3}{2 \beta ^4+21 \beta ^3+29 \beta ^2+38 \beta +5}$
			\end{tabular*}
			{\rule{\temptablewidth}{0.5pt}}
			\tnote{The two cases $\rmk=5^*$ and $5^\star$ require $1\le \beta<18$ and $\beta\ge18$, respectively.}
		\end{threeparttable}
	\end{table}	
	
	Table \ref{table: controllability intensity GBDF} collects the upper bounds of  $\sigma_{\mathrm{F},\mathrm{G}}^{(\rmk,\beta)}$ and $\sigma_{\mathrm{E},\mathrm{G}}^{(\rmk,\beta)}$, the lower bounds of $\lambda_{\mathrm{I},\mathrm{G}}^{(\rmk,\beta)}$ and $\mathfrak{I}_{\mathrm{IE},\mathrm{G}}^{(\rmk,\beta)}$ for the GBDF-$\rmk$ methods with the parameter $\beta\ge1$, see \cite[Table 2]{LiaoQuanTangZhou:IMES}.
	Simple computations show that 
	\begin{align*}
		&\mathfrak{I}_{\mathrm{IE},\mathrm{G}}^{(2,\beta)}>\frac{\sqrt{2}}{2}\quad\text{if $\beta>\frac{3}{2}+\sqrt{2}\approx 2.914$,}
		&\mathfrak{I}_{\mathrm{IE},\mathrm{G}}^{(3,\beta)}
		&>\frac{\sqrt{2}}{2}\quad\text{if $\beta>5.46572$,}\\
		&\mathfrak{I}_{\mathrm{IE},\mathrm{G}}^{(4,\beta)}>\frac{\sqrt{2}}{2}\quad\text{if $\beta>8.97865$,}
		&\mathfrak{I}_{\mathrm{IE},\mathrm{G}}^{(5,\beta)}
		&>\frac{\sqrt{2}}{2}\quad\text{if $\beta>19.9988$.}
	\end{align*}
	They say that, by choosing the parameters $\beta_{\rmk}=3,11/2, 9, 20$ corresponding to $\rmk=2,3,4,5$, respectively, the GBDF-$\rmk$ schemes theoretically maintain the unconditional stability of the consistent splitting IELM methods \eqref{scheme: general multistep-pressure Poisson}-\eqref{scheme: general multistep-momentum}. In this sense, Theorems \ref{thm: NS multistep stability} and \ref{thm: NS multistep convergence} essentially improve the results of \cite[Theorem 4.1]{HuangShen:2025mcom}, where three concrete cases with fixed parameters $\beta_{\rmk}=3,6,9$ corresponding to $\rmk=2,3,4$, respectively, were verified theoretically.
	
	\subsection{$\gamma$-parameterized SIELM-$\rmk$ methods}
	\lan{The SIELM-$\rmk$ ($2\le\rmk\le9$) methods \cite{LiaoQuanTangZhou:IMES} with the discrete coefficients $a_{\mathrm{S},j}^{(\rmk)}$, $b_{\mathrm{S},j}^{(\rmk)}$ and $c_{\mathrm{S},j}^{(\rmk)}$ are determined by the following three characteristic polynomials 
	$${\varrho}_{a,\mathrm{S}}^{(\rmk)}(\zeta):=\sum_{j=1}^{\rmk}\frac{f_{\mathrm{S}}^{(j)}(1)}{j!}(\zeta-1)^{j-1}\quad \text{with}\quad f_{\mathrm{S}}(z)=(\gamma z-\gamma+1)^{\rmk-1}z\ln z,$$ 
	$\varrho_{b,\mathrm{S}}^{(\rmk)}(\zeta):=\zeta(\gamma\zeta-\gamma+1)^{\rmk-1}$ and $\varrho_{c,\mathrm{S}}^{(\rmk)}(\zeta):=\zeta(\gamma\zeta-\gamma+1)^{\rmk-1}-\gamma^{\rmk-1}(\zeta-1)^{\rmk}$,
	respectively. By the Routh-Hurwitz criterion,  one can check that $\varrho_{a,\mathrm{S}}^{(\rmk)}(\zeta)$  is a Schur polynomial if  $\gamma>0$, $\gamma>\frac{1}{\sqrt{6}}$, $\gamma>0$, $\gamma >0.647518$, $\gamma >0.830364$, $\gamma >1.02816$, 
	$\gamma>1.23687$ and $\gamma>1.45371$ corresponding to the order index $\rmk=2,3,\cdots,8$ and 9, respectively, which ensure that the implicit parts of the SIELM-$\rmk$ methods for $2\le\rmk\le9$ are strongly $A(0)$-stable.}

	Here we recall the SIELM-$\rmk$ methods for $2\le\rmk\le6$ with the first coefficient vector:
	\begin{itemize}[itemindent=0.4cm,leftmargin=1.5cm]
		\item[(SIELM2)]: $\vec{a}_{\mathrm{S}}^{(2)}=(\tfrac{1}{2}+\gamma,\tfrac{1}{2}-\gamma)$;
		\item[(SIELM3)]: $\vec{a}_{\mathrm{S}}^{(3)}
		=\brab{\gamma ^2+\gamma -\tfrac{1}{6},\tfrac{5}{6}-2 \gamma ^2,\gamma ^2-\gamma +\tfrac{1}{3}};$
		\item[(SIELM4)]:
		\begin{align*}
			&\vec{a}_{\mathrm{S}}^{(4)}
			=\big(\gamma ^3+\tfrac{3 \gamma ^2}{2}-\tfrac{\gamma }{2}+\tfrac{1}{12},-3 \gamma ^3-\tfrac{3 \gamma ^2}{2}+3 \gamma -\tfrac{5}{12},3 \gamma ^3-\tfrac{3 \gamma ^2}{2}-\tfrac{3 \gamma }{2}+\tfrac{13}{12},-\gamma ^3+\tfrac{3 \gamma ^2}{2}-\gamma +\tfrac{1}{4}\big);
		\end{align*}
		\item[(SIELM5)]:
		\begin{align*}
			&\vec{a}_{\mathrm{S}}^{(5)}
			=\big(\gamma ^4+2 \gamma ^3-\gamma ^2+\tfrac{\gamma }{3}-\tfrac{1}{20},-4 \gamma ^4-4 \gamma ^3+7 \gamma ^2-2 \gamma +\tfrac{17}{60},\\
			&\qquad\qquad\big.6 \gamma ^4-9 \gamma ^2+6 \gamma -\tfrac{43}{60},
			-4 \gamma ^4+4 \gamma ^3+\gamma ^2-\tfrac{10 \gamma }{3}+\tfrac{77}{60},\gamma ^4-2 \gamma ^3+2 \gamma ^2-\gamma +\tfrac{1}{5}\big);
		\end{align*}
			\item[(SIELM6)]:
		\begin{align*}
			&\vec{a}_{\mathrm{S}}^{(6)}
			=\big(\gamma ^5+\tfrac{5 \gamma ^4}{2}-\tfrac{5 \gamma ^3}{3}+\tfrac{5 \gamma ^2}{6}-\tfrac{\gamma }{4}+\tfrac{1}{30},
			-5 \gamma ^5-\tfrac{15 \gamma ^4}{2}+\tfrac{40 \gamma ^3}{3}-\tfrac{35 \gamma ^2}{6}+\tfrac{5 \gamma }{3}-\tfrac{13}{60},\notag\\
			&\qquad10 \gamma ^5+5 \gamma ^4-\tfrac{80 \gamma ^3}{3}+20 \gamma ^2-5 \gamma +\tfrac{37}{60},-10 \gamma ^5+5 \gamma ^4+\tfrac{50 \gamma ^3}{3}-\tfrac{70 \gamma ^2}{3}+10 \gamma -\tfrac{21}{20},\notag\\
			&\qquad5 \gamma ^5-\tfrac{15 \gamma ^4}{2}+\tfrac{5 \gamma ^3}{3}+\tfrac{35 \gamma ^2}{6}-\tfrac{65 \gamma }{12}+\tfrac{29}{20}, -\gamma ^5+\tfrac{5 \gamma ^4}{2}-\tfrac{10 \gamma ^3}{3}+\tfrac{5 \gamma ^2}{2}-\gamma +\tfrac{1}{6}\big).
		\end{align*}
	\end{itemize}
	
\begin{table}[htb!]
	\centering
	\begin{threeparttable}
		\centering
		\caption{Stability property of SIELM-$\rmk$ methods}
		\vspace*{0.3pt}
		\def\temptablewidth{1\textwidth}
		\label{table: controllability intensity SIELM}
		\begin{tabular*}{\temptablewidth}{@{\extracolsep{\fill}}clc}
			\toprule
			$\rmk$&Range of $\gamma$ 
			&$\mathfrak{I}_{\mathrm{IE},\mathrm{S}}^{(\rmk,\gamma)}$\\  \midrule		
			$2$  &$[1,+\infty)$
			&$\tfrac{2\gamma-1}{2\gamma +1}$\\[5pt]
			$3$  &  $[1,+\infty)$
			& $\frac{(2\gamma-1)^2}{4 \gamma ^2+4 \gamma -1}$  \\[5pt]
			$4$ &	 $[6/5,+\infty)$		
			&  $\frac{(2 \gamma -1)^3}{8 \gamma ^3+12 \gamma ^2-6 \gamma +1}$ \\[5pt]
			$5$ & 	 $[7/5,+\infty)$
			& $\frac{(2\gamma-1)^4}{16 \gamma ^4+32 \gamma ^3-24 \gamma ^2+8 \gamma -1}$\\[5pt]
			$6$ &$[10/5,80]$
			& $\frac{(2 \gamma -1)^5}{32 \gamma ^5+80 \gamma ^4-80 \gamma ^3+40 \gamma ^2-10 \gamma +1}$
			\\[5pt]
			$7$ &$[11/5,60]$
			& {$\frac{(2 \gamma-1 )^6}{64 \gamma ^6+192 \gamma ^5-240 \gamma ^4+160 \gamma ^3-60 \gamma ^2+12 \gamma -1}$}
			\\[5pt]
			$8$ &$[13/5,40]$
			& {  $\frac{(2 \gamma -1)^7}{128 \gamma ^7+448 \gamma ^6-672 \gamma ^5+560 \gamma ^4-280 \gamma ^3+84 \gamma ^2-14 \gamma +1}$}\\[5pt]
			$9$ &$[15/5,25]$
			& {$ \frac{(2\gamma-1)^8}{256\gamma^8 + 1024\gamma^7 - 1792\gamma^6 + 1792\gamma^5 - 1120\gamma^4 + 448\gamma^3 - 112\gamma^2 + 16\gamma - 1}$}
		\end{tabular*}
		{\rule{\temptablewidth}{0.5pt}}
		\footnotesize
	\end{threeparttable}
\end{table}	
	
\lan{For the semi-generating functions $a_{\mathrm{S}}^{(\rmk)}(\theta)$, $b_{\mathrm{S}}^{(\rmk)}(\theta)$
and $c_{\mathrm{S}}^{(\rmk)}(\theta)$, defined via \eqref{matrix: A_L B_L C_L generating function},
of the SIELM-$\rmk$ methods, we have the following results, see \cite[Propositions SM3.1-SM3.17]{LiaoQuanTangZhou:IMES},
\begin{align}\label{intensity: lambda sigma-SIELM}
	\sigma_{\mathrm{F},\mathrm{S}}^{(\rmk)}=\frac{1}{\absb{a_{\mathrm{S}}^{(\rmk)}(0)}}=1,\quad
	\sigma_{\mathrm{E},\mathrm{S}}^{(\rmk)}=
	\frac{\absb{c_{\mathrm{S}}^{(\rmk)}(\pi)}}{\absb{a_{\mathrm{S}}^{(\rmk)}(\pi)}},	\quad
	\lambda_{\mathrm{I},\mathrm{S}}^{(\rmk)}=\Re\kbra{\frac{b_{\mathrm{S}}^{(\rmk)}(\pi)}{a_{\mathrm{S}}^{(\rmk)}(\pi)}}
\end{align}
if the free parameter $\gamma$ satisfies $\gamma_{\rmk}\le \gamma\le \gamma_{\rmk}^*$,  cf. Table \ref{table: controllability intensity SIELM}. }	\lan{Table \ref{table: controllability intensity SIELM} collects the value of    $\mathfrak{I}_{\mathrm{IE},\mathrm{S}}^{(\rmk,\gamma)}$ with the ranges of $\gamma$ for the  SIELM-$\rmk$ methods, see \cite[Table 3]{LiaoQuanTangZhou:IMES}. It is easy to check that
	the implicit-explicit controllability intensities for $2\le\rmk\le9$, respectively,
	\begin{align*}
		&\mathfrak{I}_{\mathrm{IE},\mathrm{S}}^{(2,\gamma)}>\frac{\sqrt{2}}{2}\quad\text{if $\gamma>2.914$,}
		&\mathfrak{I}_{\mathrm{IE},\mathrm{S}}^{(3,\gamma)}
			&>\frac{\sqrt{2}}{2}\quad\text{if $\gamma>5.56667$,}\\
		&\mathfrak{I}_{\mathrm{IE},\mathrm{S}}^{(4,\gamma)}>\frac{\sqrt{2}}{2}\quad\text{if $\gamma>8.22174$,}
		&\mathfrak{I}_{\mathrm{IE},\mathrm{S}}^{(5,\gamma)}
		&>\frac{\sqrt{2}}{2}\quad\text{if $\gamma>10.8775$,}\\
		&\mathfrak{I}_{\mathrm{IE},\mathrm{S}}^{(6,\gamma)}>\frac{\sqrt{2}}{2}\quad\text{if $13.5334<\gamma\le 80$,}
		&\mathfrak{I}_{\mathrm{IE},\mathrm{S}}^{(7,\gamma)}
		&>\frac{\sqrt{2}}{2}\quad\text{if $16.18966<\gamma\le 60$,}\\
			&\mathfrak{I}_{\mathrm{IE},\mathrm{S}}^{(8,\gamma)}
			>\frac{\sqrt{2}}{2}\quad\text{if $18.84576<\gamma\le 40$,}
	&\mathfrak{I}_{\mathrm{IE},\mathrm{S}}^{(9,\gamma)}
		&>\frac{\sqrt{2}}{2}\quad\text{if $21.5026<\gamma\le 25$.}
	\end{align*}
	 They say that, by choosing the parameters $\gamma_{\rmk}=3.0,5.6, 8.3, 10.9, 13.6, 16.2, 18.9$ and $21.6$ corresponding to $\rmk=2,3,\cdots,8$ and 9, respectively, the SIELM-$\rmk$ schemes theoretically maintain the unconditional stability of the consistent splitting IELM methods \eqref{scheme: general multistep-pressure Poisson}-\eqref{scheme: general multistep-momentum}.}

	\section{Numerical experiments}\label{sec: numerical example}
	\setcounter{equation}{0}
	
	This section presents some numerical tests of
	the GBDF-$\rmk$ and SIELM-$\rmk$ methods for the unconstrained INSE system \eqref{cont: unconstrained INSE-pressure Poisson}-\eqref{cont: unconstrained INSE-momentum}. For the convenience of comparisons, our numerical setting follows closely the benchmark
	tests in \cite{HuangShen:2025mcom}, where three specific cases $\beta=3,6$ and $9$ of the following GBDF-$\rmk$  schemes were verified for $\mathrm{k}=2,3$ and $4$, respectively,
	\begin{align*}	
		\myinner{\nabla p^{n},\nabla \phi}=&\,\myinner{
			\nu\Delta\myvec{u}^{n}-\nu\nabla\nabla\cdot\myvec{u}^{n}-\myvec{g}^{n},\nabla \phi}\quad\text{for $\forall\phi\in H^1(\Omega)$, $n\ge0$,}\\
		\sum_{j=0}^{\rmk-1}a_{\mathrm{G},j}^{(\rmk)}\partial_{\tau}\myvec{u}^{n-j}
		=&\,\nu\sum_{j=0}^{\rmk-1}b_{\mathrm{G},j}^{(\rmk)}\Delta\myvec{u}^{n-j}
		-\sum_{j=0}^{\rmk-1}c_{\mathrm{G},j}^{(\rmk)}\nabla p^{n-j-1}-\myvec{g}\braB{\sum_{j=0}^{\rmk-1}c_{\mathrm{G},j}^{(\rmk)}\myvec{u}^{n-j-1}}
	\end{align*}
	for $\myvec{x}\in\Omega$ with $\myvec{u}^n=0$ on $\partial\Omega$ and $n\ge\rmk$. Note that, the GBDF approximation in \cite{HuangShen:2025mcom} of the momentum equation is slightly different from \eqref{scheme: general multistep-momentum} on the explicit approximation of nonlinear convective term.
	
	\begin{figure}[htb!]
		\centering
		\subfigure[GBDF3]{\includegraphics[width=0.42\textwidth]{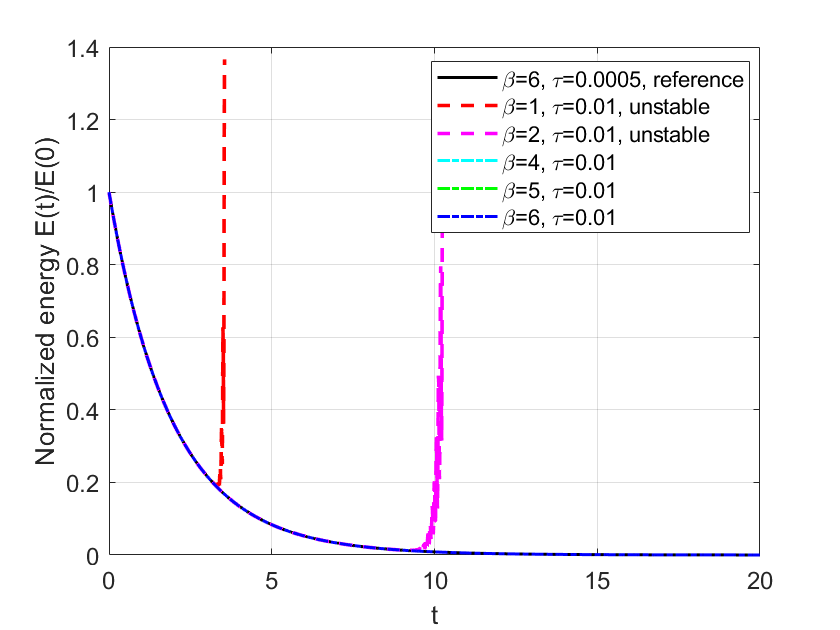}}
		\subfigure[GBDF4]{\includegraphics[width=0.42\textwidth]{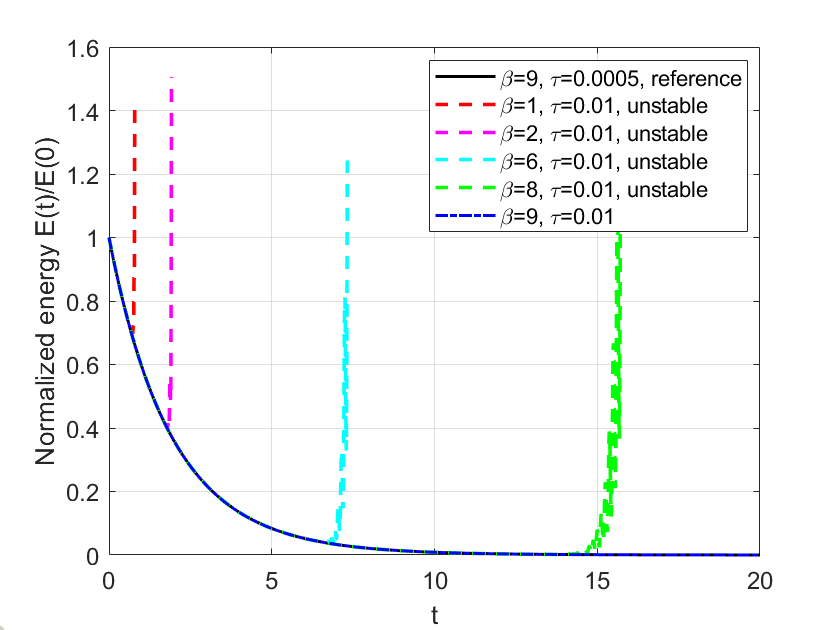}}
		\caption{Energy curves for the Stokes problem by GBDF3 and GBDF4 schemes.}\label{fig:stokes_GBDF}
	\end{figure}
	
	\begin{figure}[htb!]
		\centering
		\subfigure[SIELM3]{\includegraphics[width=0.42\textwidth]{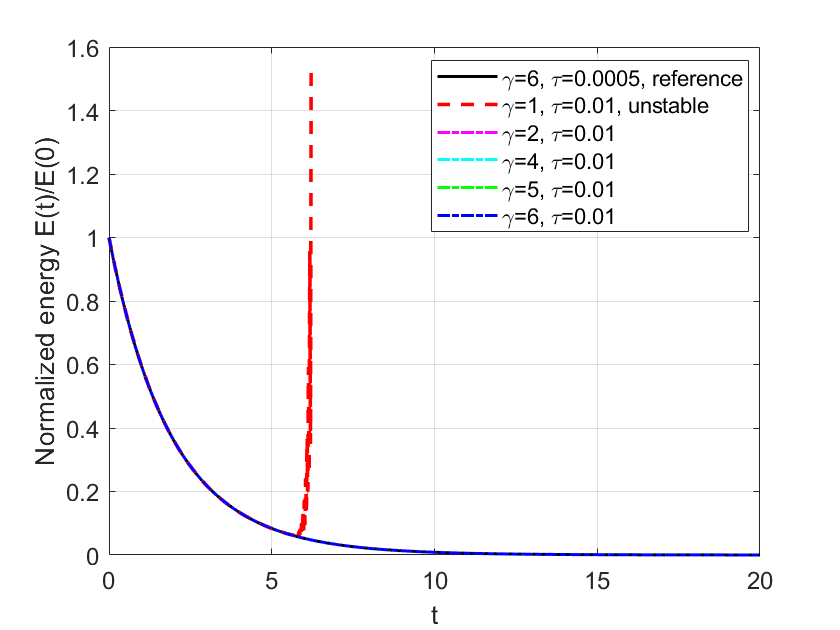}}
			\subfigure[SIELM4]{\includegraphics[width=0.42\textwidth]{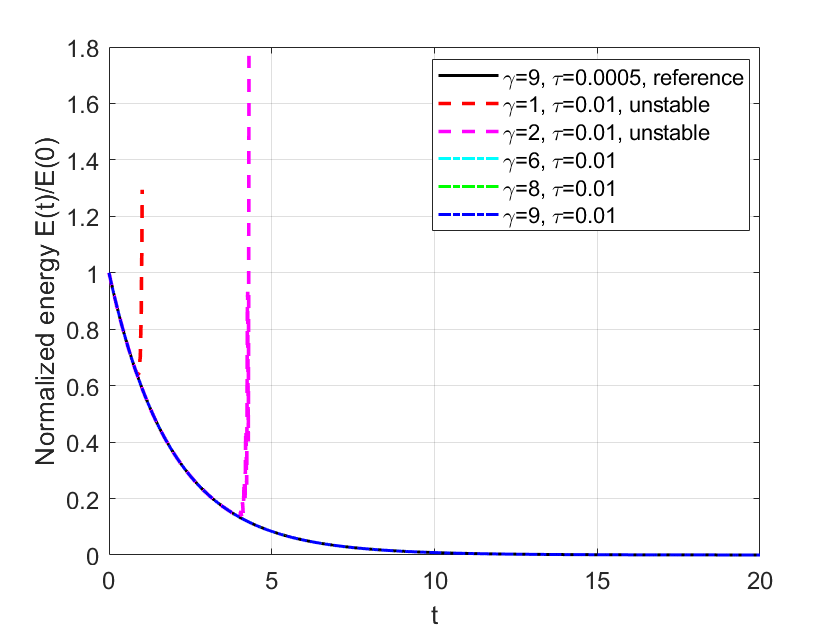}}
		\caption{Energy curves for the Stokes problem by SIELM3 and SIELM4 schemes.}\label{fig:stokes_SIELM}
	\end{figure}

	\begin{figure}[htb!]
		\centering
		\subfigure[Normalized energy evolution]{\includegraphics[width=0.42\textwidth]{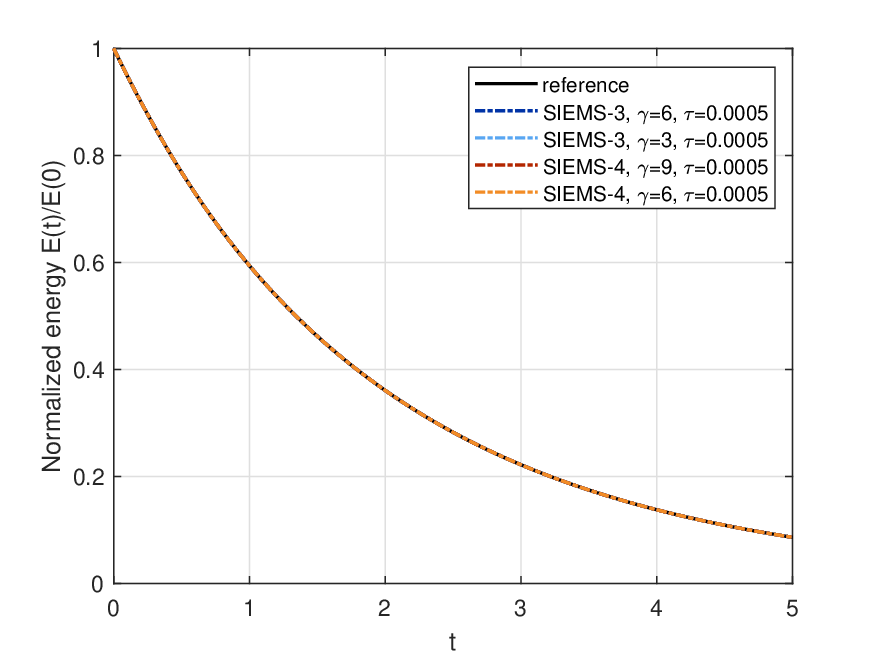}}
		\subfigure[T=0.01]{\includegraphics[width=0.42\textwidth]{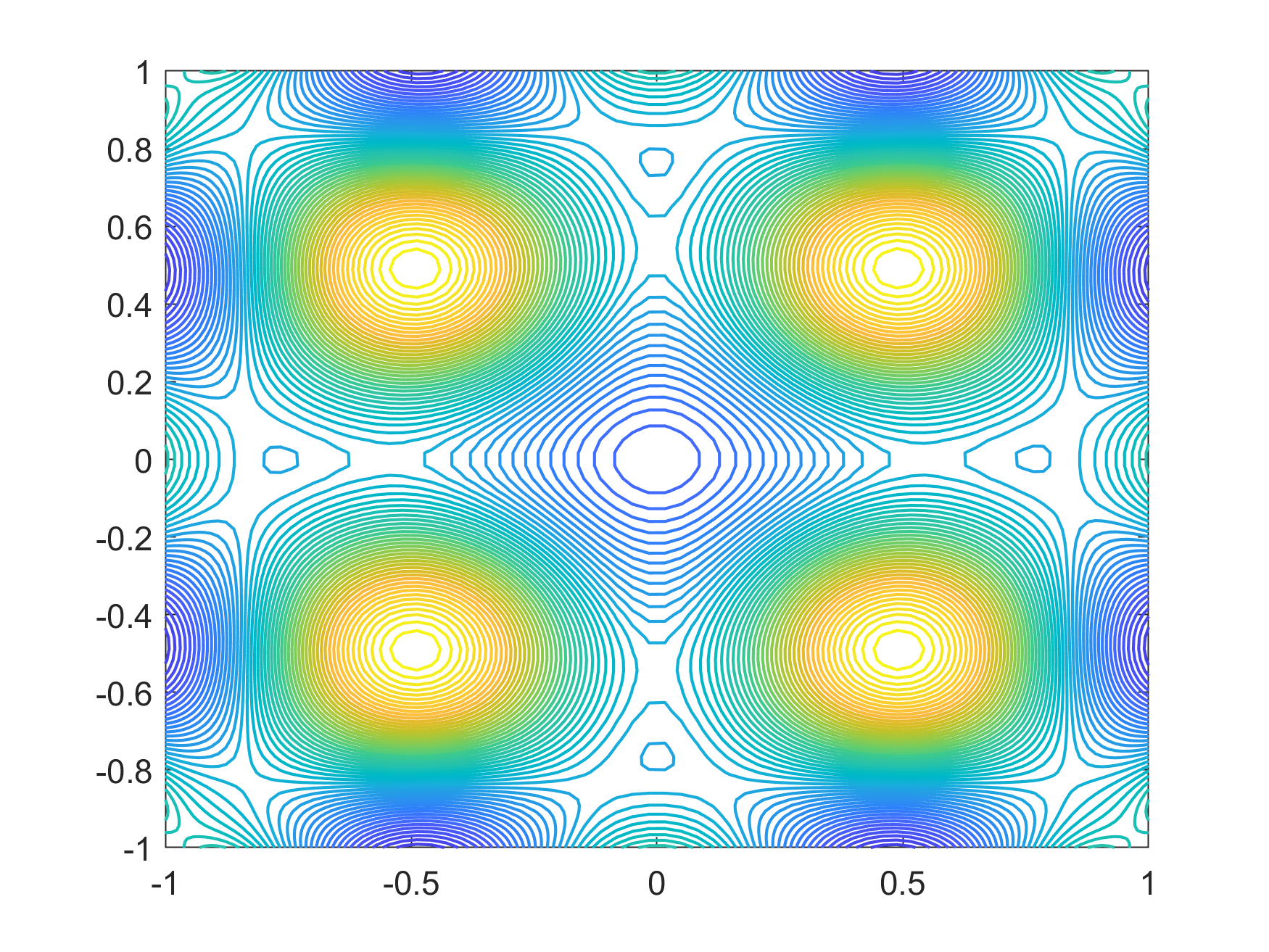}}\\
		\subfigure[T=3]{\includegraphics[width=0.42\textwidth]{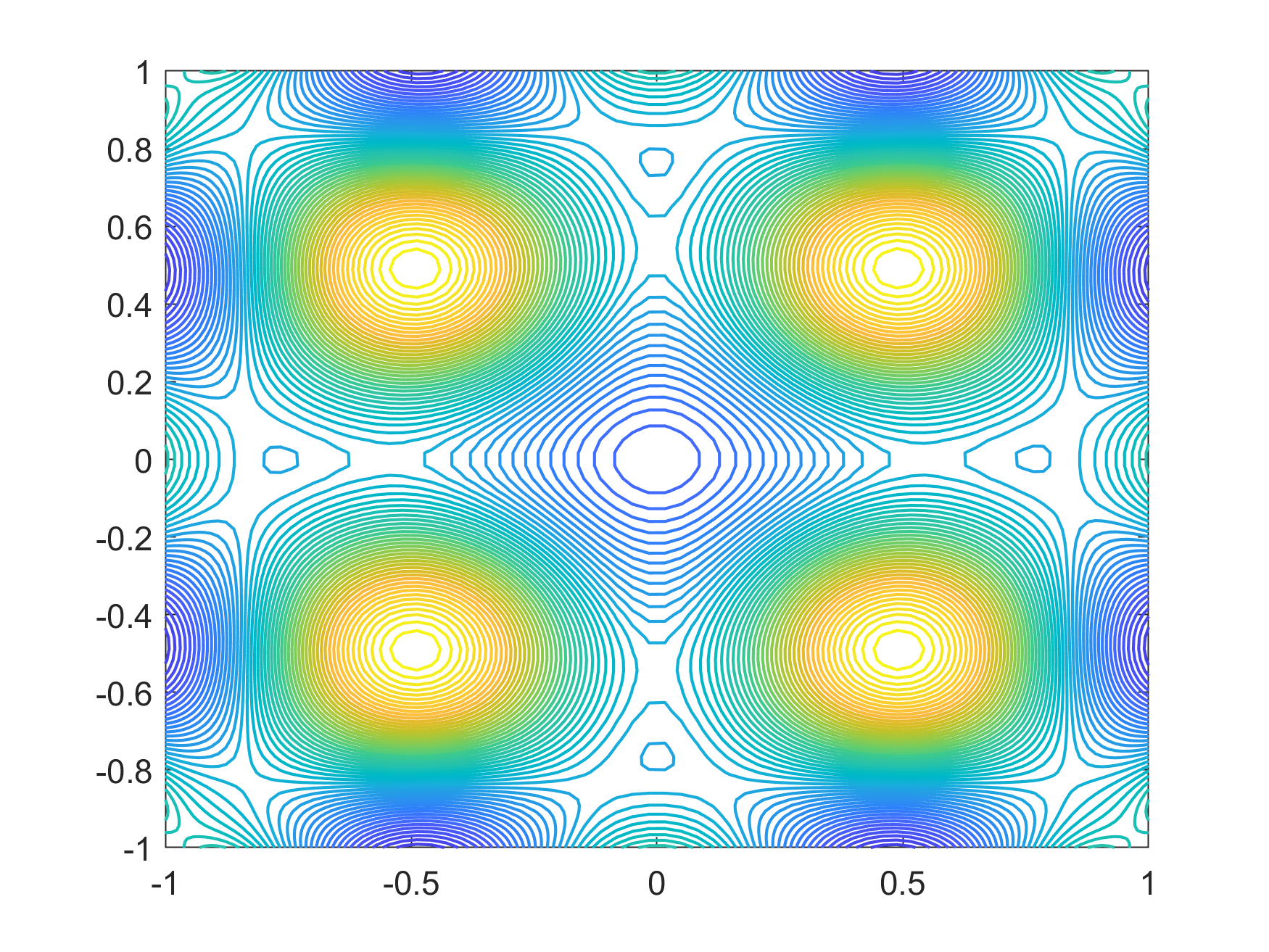}}
		\subfigure[T=5]{\includegraphics[width=0.42\textwidth]{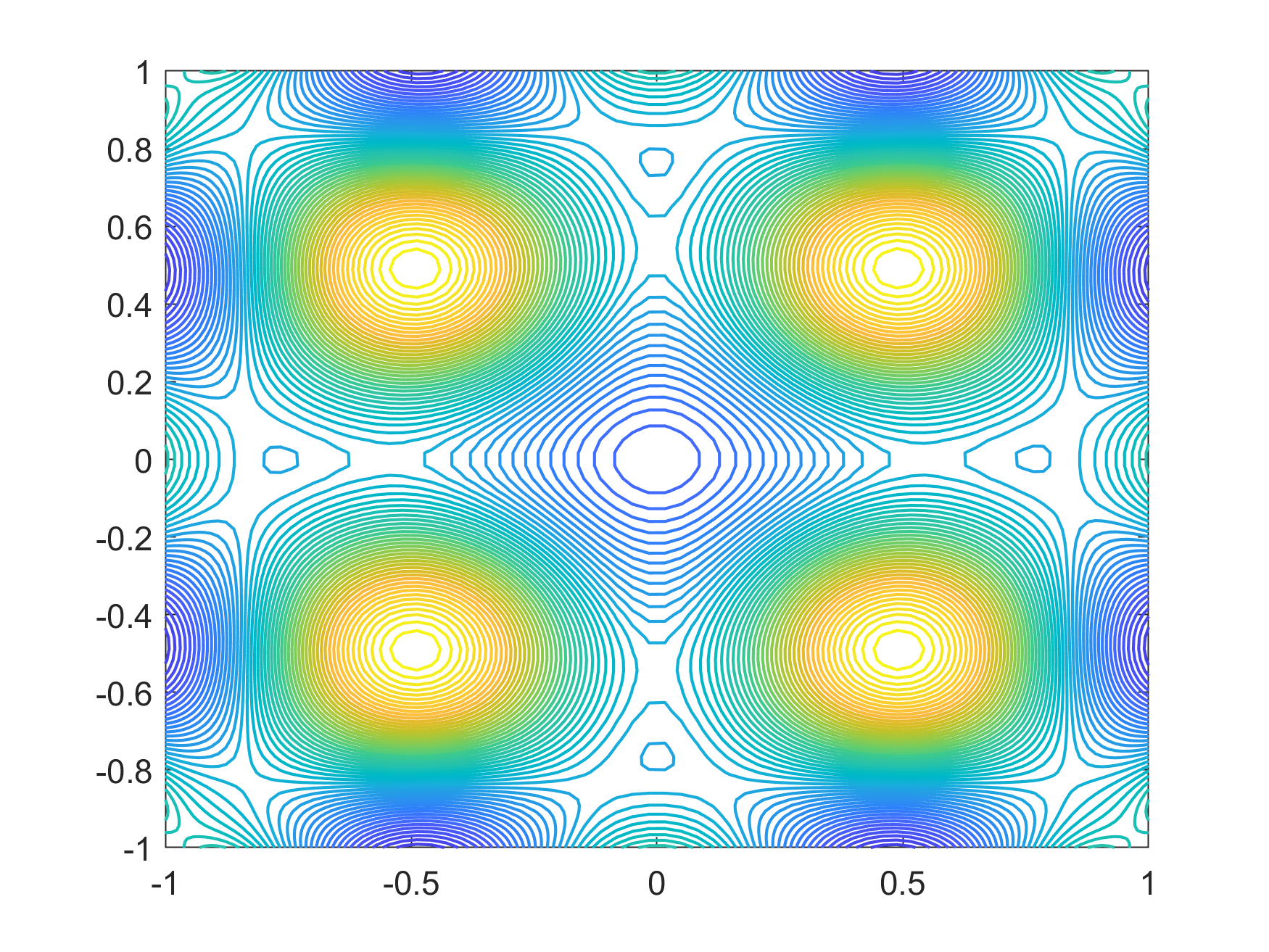}}
		\caption{Normalized energy evolution for the Navier-Stokes equations and
			snapshots of the vorticity contours at $T=0.01$, $T=3$ and $T=5$.}\label{fig:vorticity}
	\end{figure}
	
	To demonstrate the practical effects of high-order IELM methods, we adopt a Legendre-Galerkin spectral method in space and choose sufficiently many spatial modes so that the spatial discretization error is negligible compared with the temporal error. Our proof requires that the domain $\Omega$ has a $C^3$ boundary, see \cite[Theorem 1.2]{LiuLiuPego:2007} or Lemma \ref{lem: Stokes pressure bound}, in order to apply the Stokes pressure estimate, but the present numerical tests use the square domain $\Omega=(-1,1)^2$ with a rough $C^0$ boundary to empirically probe whether the theory can be extended to polygonal domains (though it remains open to us).

	
	\begin{example}\cite[Example 1]{HuangShen:2025mcom}
		We first consider the Stokes problem, namely the unconstrained INSE system \eqref{cont: unconstrained INSE-pressure Poisson}-\eqref{cont: unconstrained INSE-momentum} with $\myvec{g}=\myvec{0}$. The viscosity is set to $\nu=0.005$, and
		the initial velocity $ \myvec{u}^0=\brab{\sin2\pi y\sin^2\pi x,-\sin2\pi x\sin^2\pi y}.$
		The Legendre-Galerkin method is used in space with the mode number $N_x=N_y=128$.
	\end{example}

	Fig.~\ref{fig:stokes_GBDF} shows the energy curves, normalized by $E(0)$, generated by the GBDF3 and GBDF4 schemes with a properly large step $\tau=0.01$ for different parameters $\beta$, in which the reference energy is obtained by using a small step $\tau=0.0005$. As  confirmed  in \cite{HuangShen:2025mcom}, the choices $\beta=6$ and $\beta=9$ arrive at stable  solutions for the GBDF3 and GBDF4 schemes, respectively; while the smaller parameters $\beta=1$ and 2 always generate unstable solutions. These cases also support our theory since they satisfy our theoretical requirements $\beta >5.46572$ and $\beta>8.97865$ for the two schemes.    To check the sharpness of our stability indicator $\mathfrak{I}_{\mathrm{IE},\mathrm{G}}^{\mathrm{(k)}}>\sqrt{2}/2$, some smaller parameters $\beta$ are also examined in our tests. We observe from Fig.~\ref{fig:stokes_GBDF} that the proposed indicator would be somewhat sharp for the GBDF4 scheme since the parameters $\beta=6$ and 8 always generate unstable solutions, while the stability condition for the GBDF3 scheme  is far away from sharp, at least for this specific example, since the choices $\beta=4$ and 5 arrive at stable solutions.

	\begin{figure}[htb!]
		\centering
		\subfigure[GBDF-2]{\includegraphics[width=0.42\textwidth]{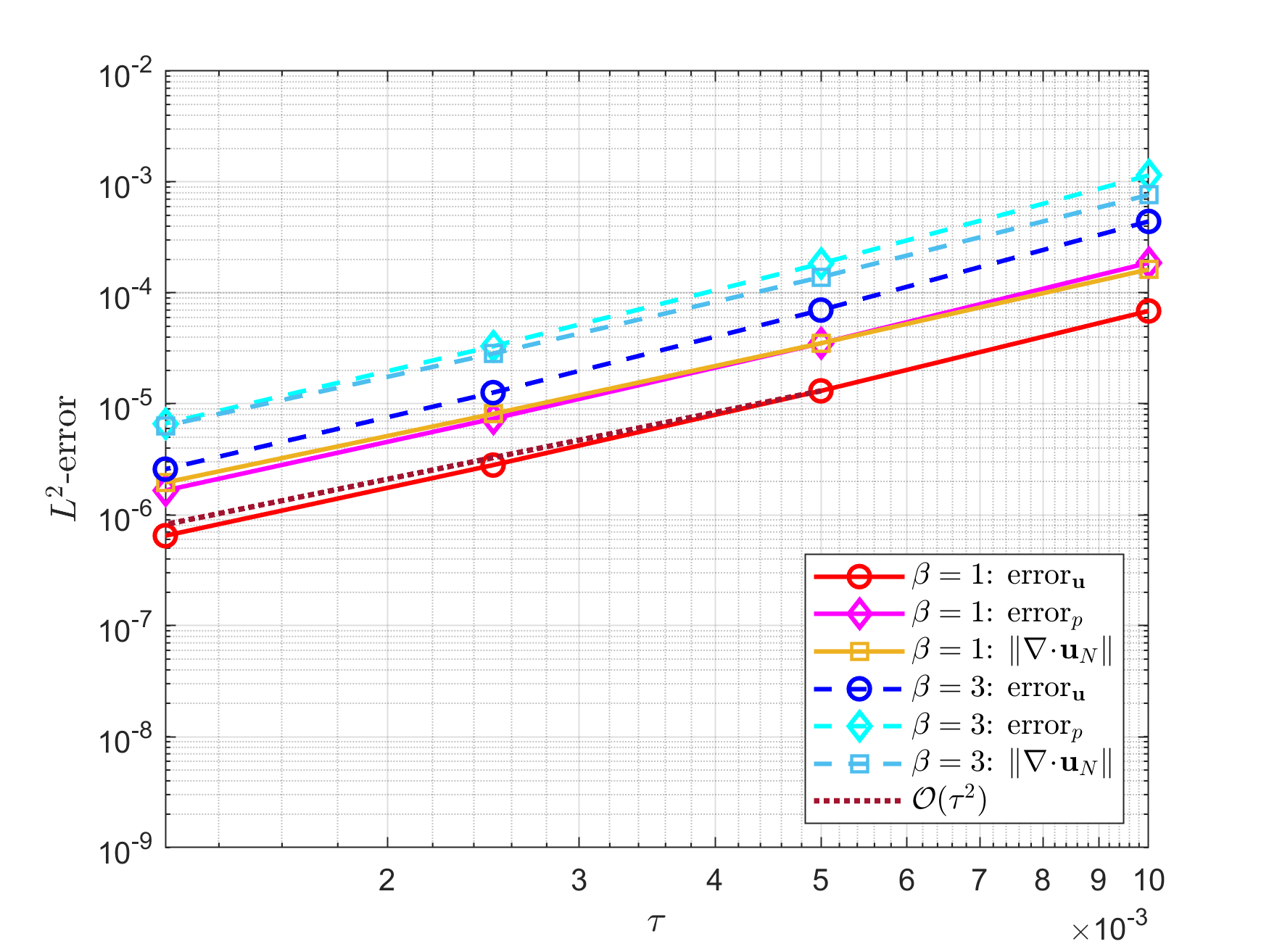}}
		\subfigure[GBDF-3]{\includegraphics[width=0.42\textwidth]{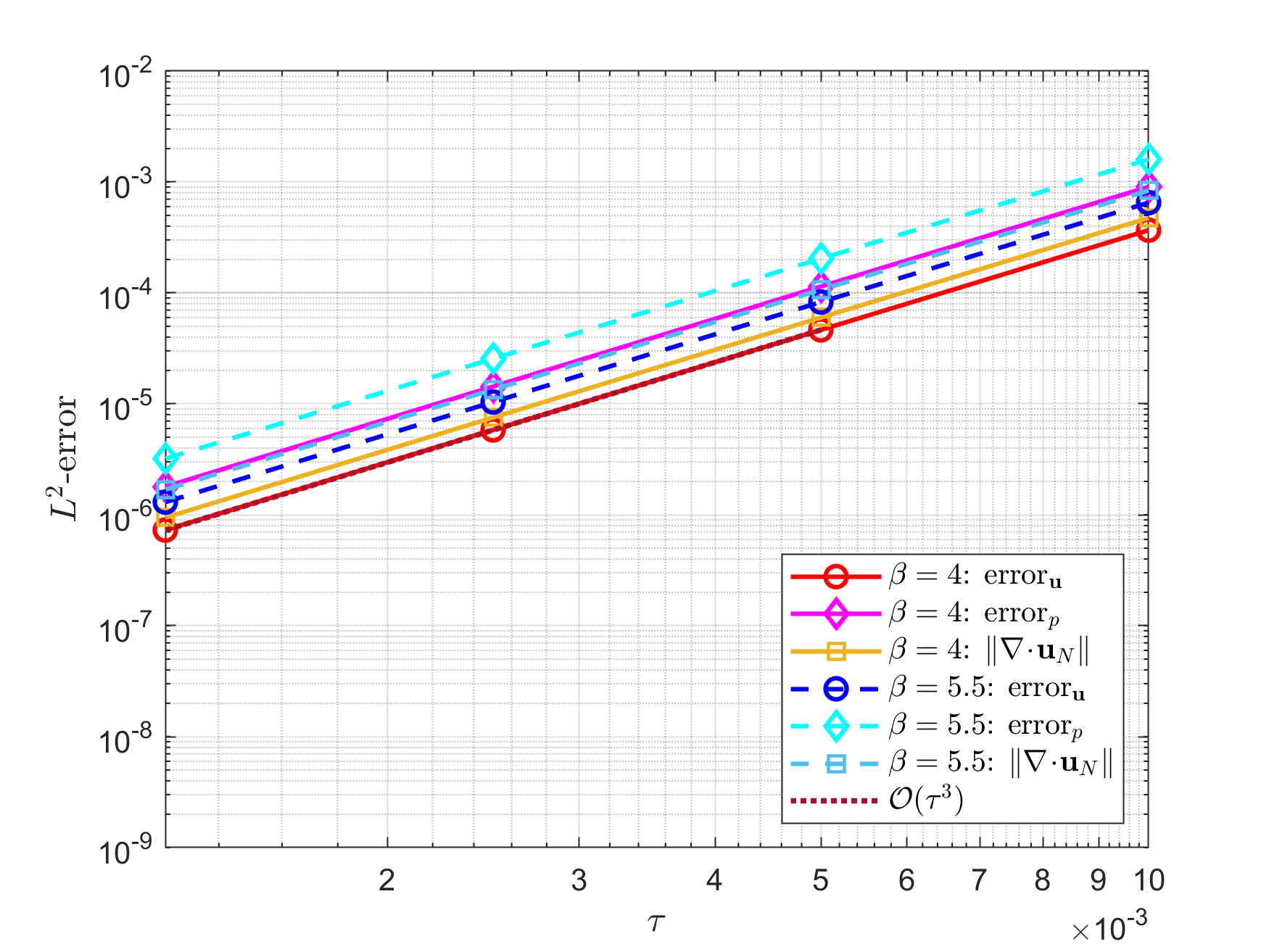}}\\
		\subfigure[GBDF-4]{\includegraphics[width=0.42\textwidth]{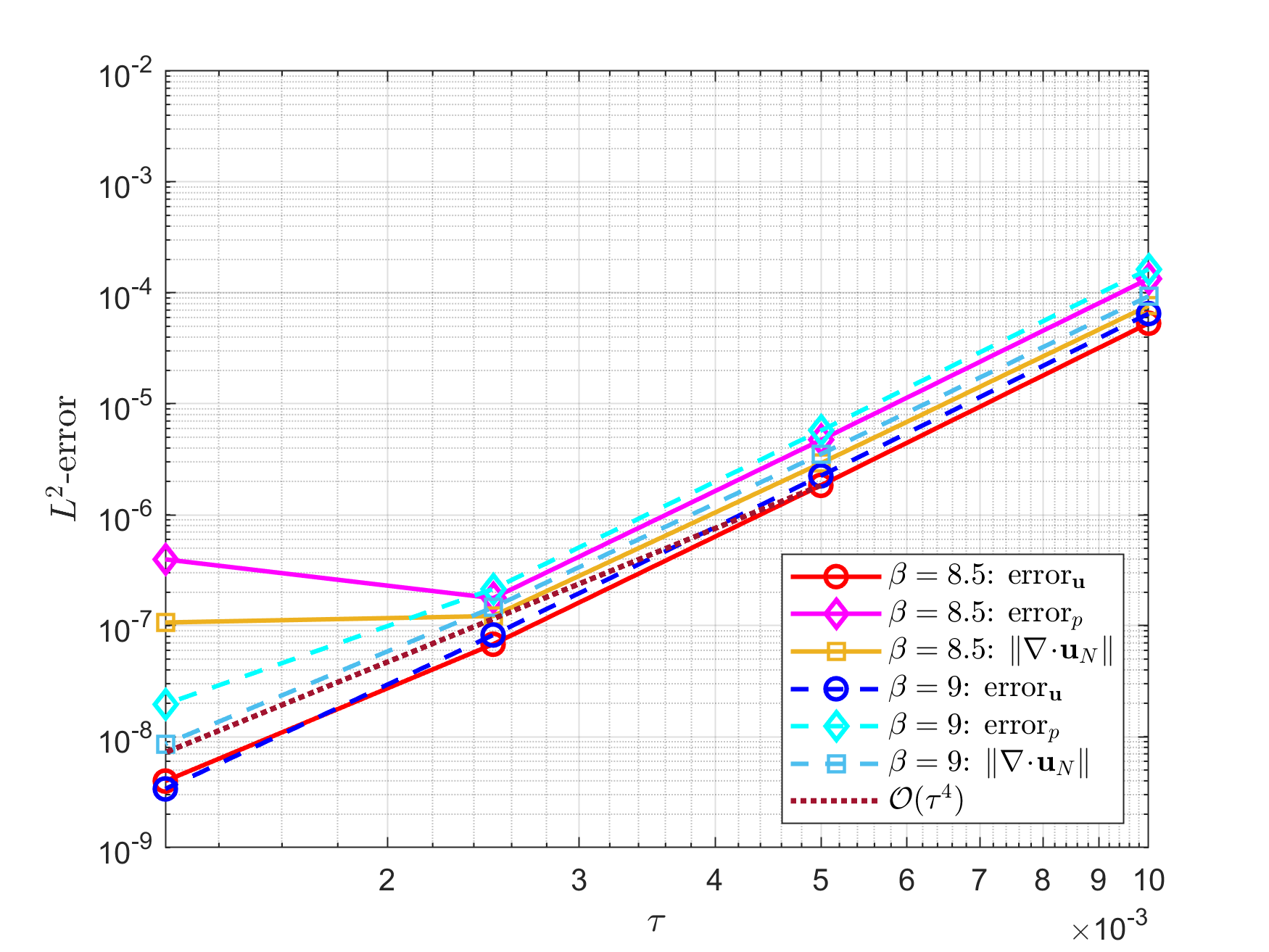}}
		\subfigure[GBDF-5]{\includegraphics[width=0.42\textwidth]{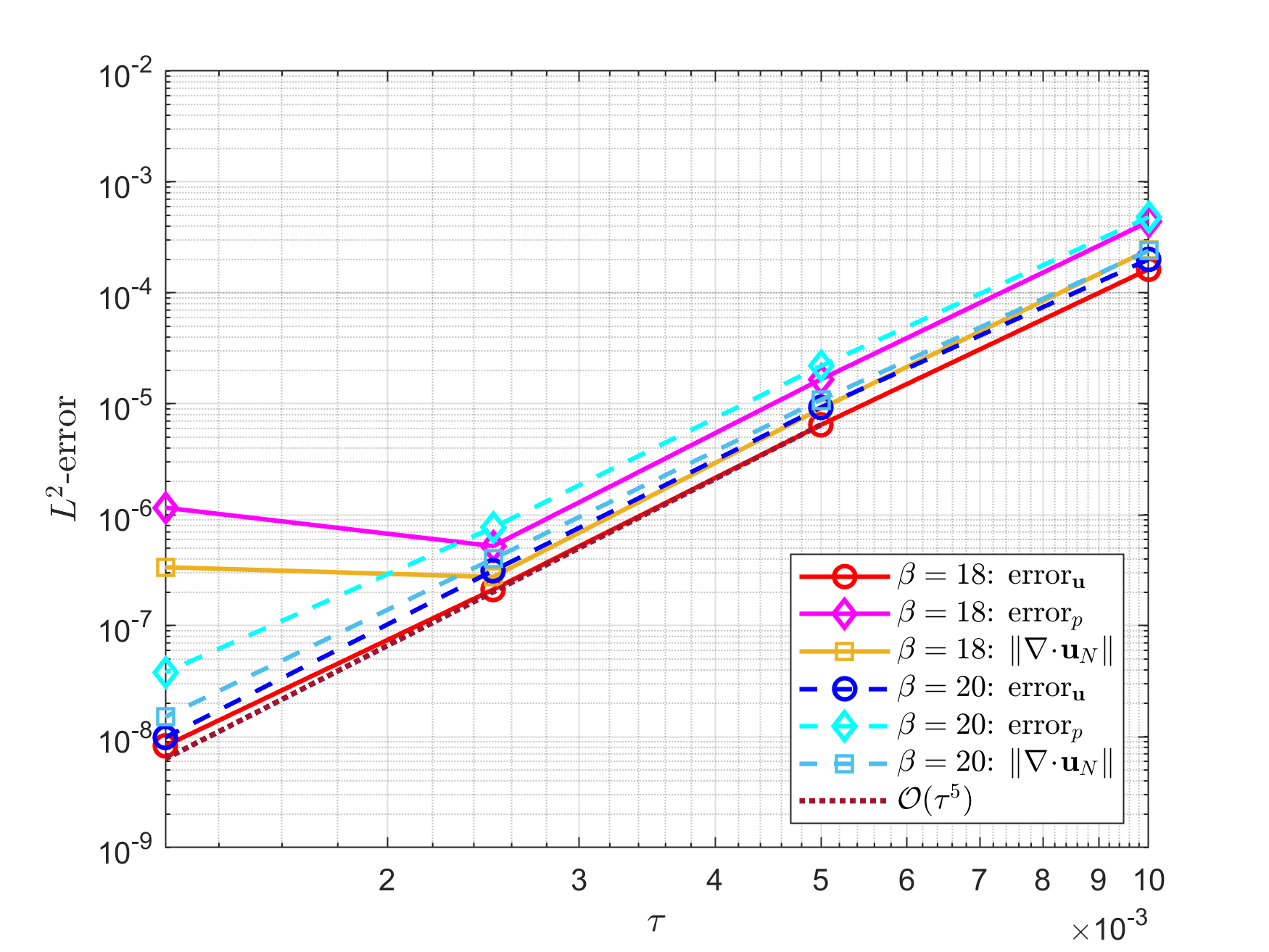}}\\
		\caption{Convergence test for the GBDF-$\mathrm{k}$ consistent splitting schemes.}\label{fig:conv_GBDF}
	\end{figure}
	
	\begin{figure}[htb!]
		\centering
		\subfigure[SIELM-4]{\includegraphics[width=0.32\textwidth]{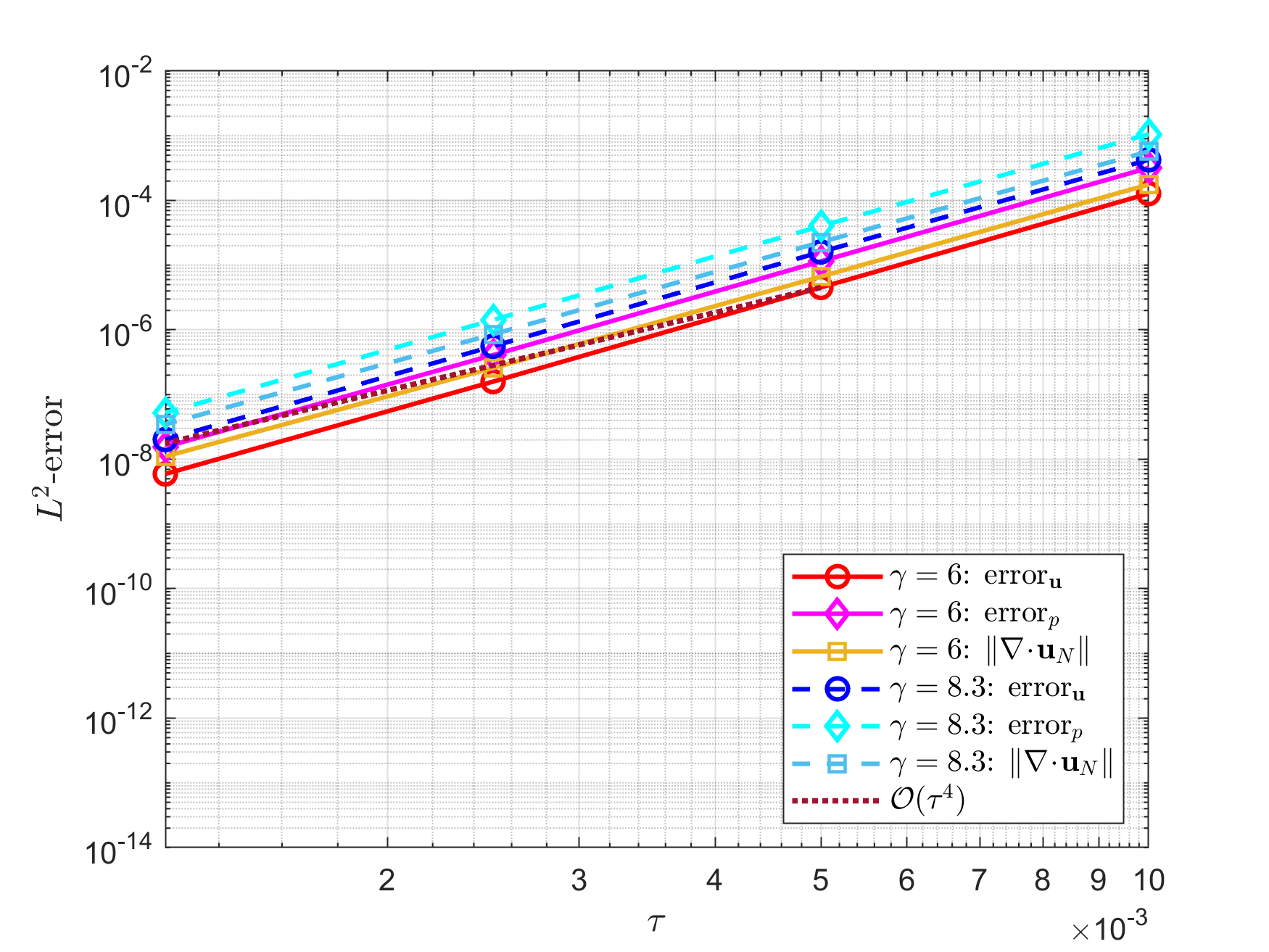}}
		\subfigure[SIELM-5]{\includegraphics[width=0.32\textwidth]{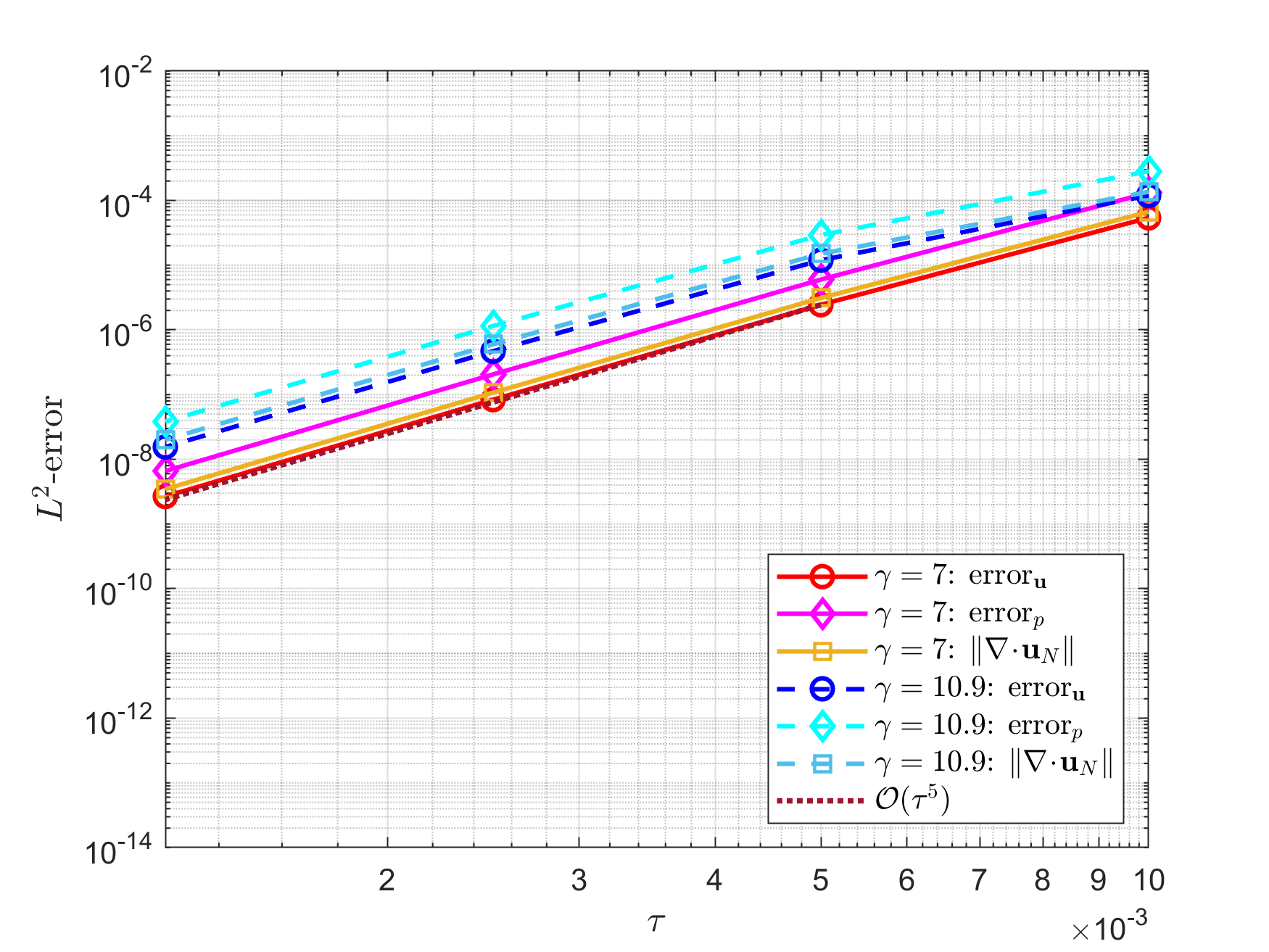}}
		\subfigure[SIELM-6]{\includegraphics[width=0.32\textwidth]{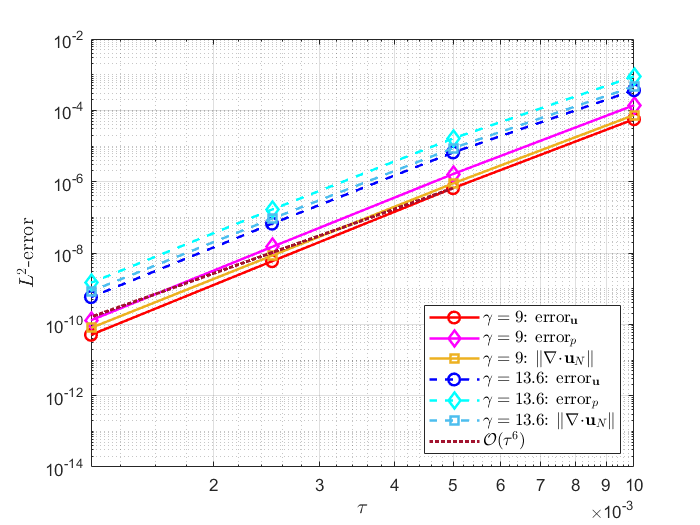}}\\
		\subfigure[SIELM-7]{\includegraphics[width=0.32\textwidth]{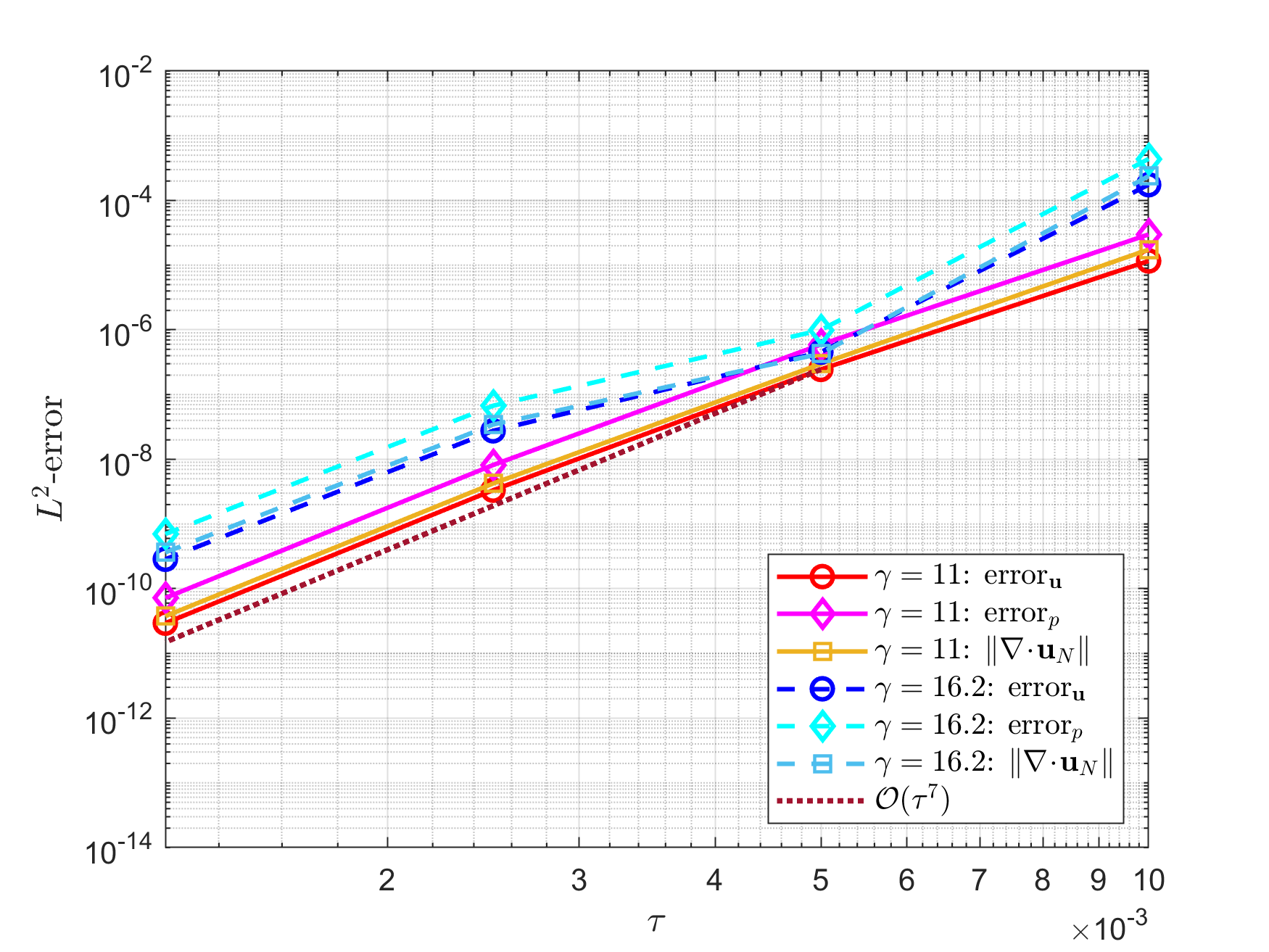}}
		\subfigure[SIELM-8]{\includegraphics[width=0.32\textwidth]{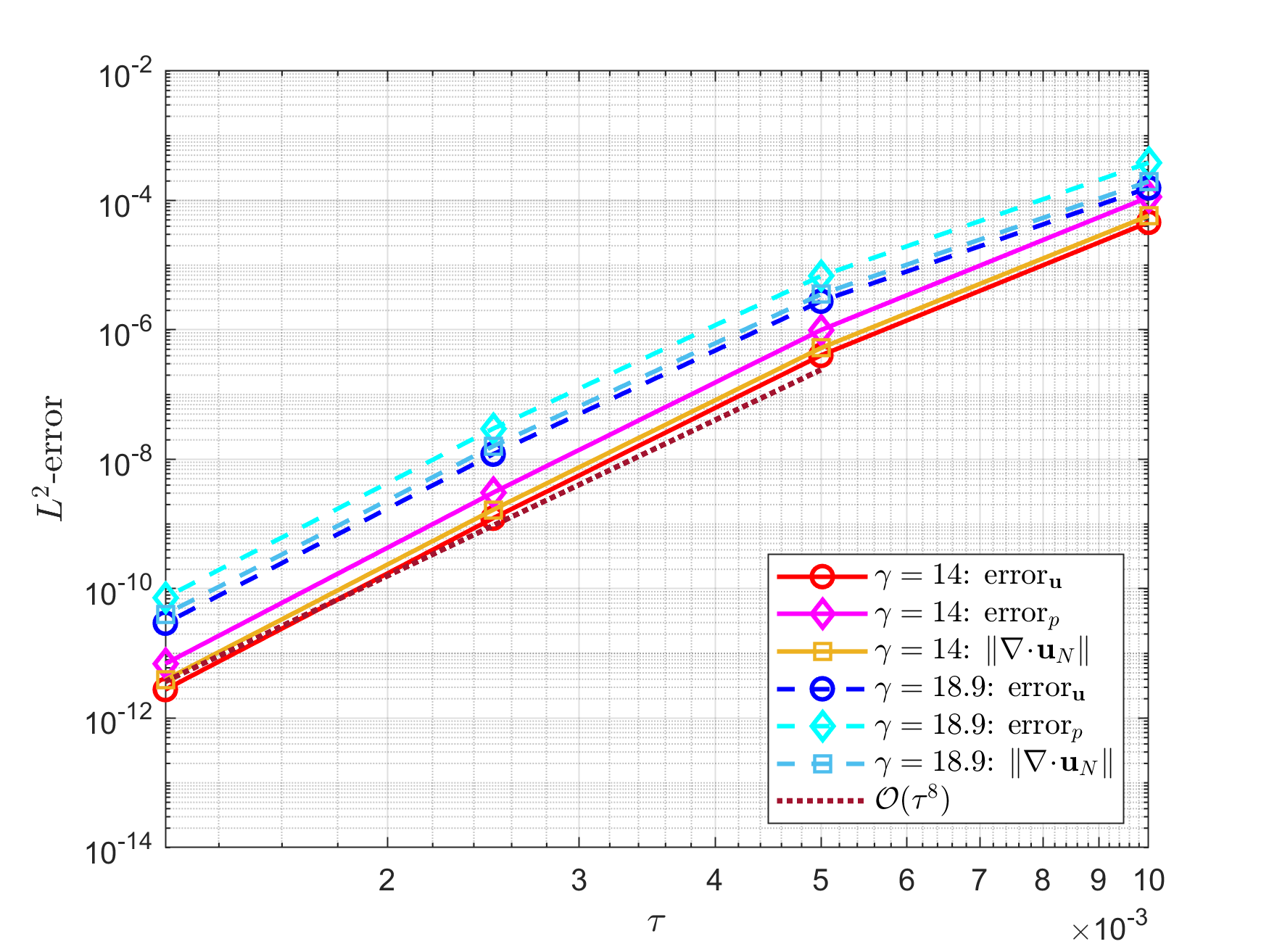}}
		\subfigure[SIELM-9]{\includegraphics[width=0.32\textwidth]{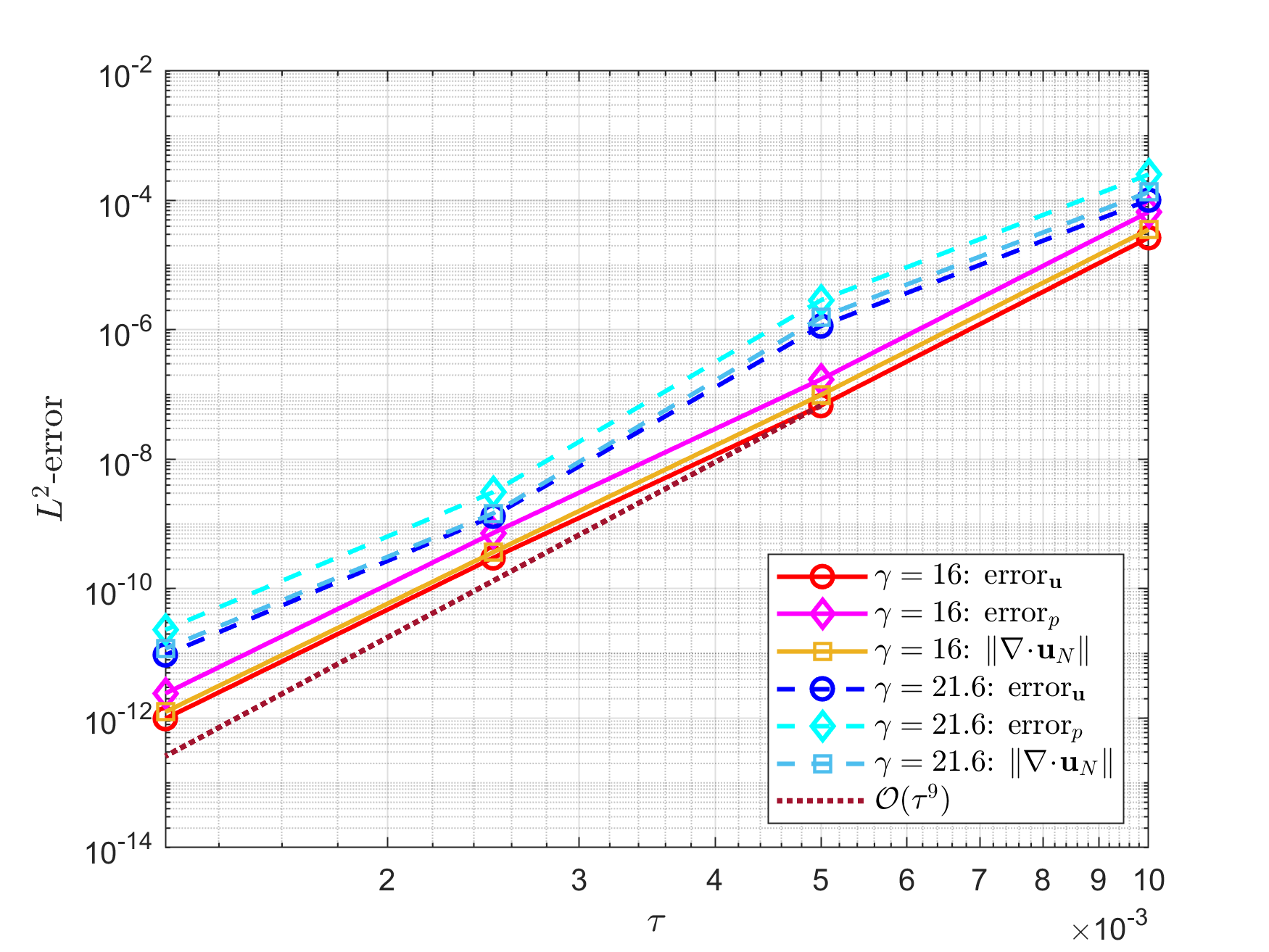}}
		\caption{Convergence test for the SIELM-$\mathrm{k}$ consistent splitting schemes.}\label{fig:conv_SIELM}
	\end{figure}

	Fig.~\ref{fig:stokes_SIELM} presents the numerical results for the SIELM3 and SIELM4 schemes with the corresponding theoretical requirements $\gamma > 5.56667$ and $\gamma>8.22174$ for the unconditional stability, respectively. It is observed that the SIELM3 scheme is stable for $\gamma\ge2$ and the SIELM4 scheme is stable for $\gamma\ge6$. It seems that the proposed stability indicator $\mathfrak{I}_{\mathrm{IE},\mathrm{S}}^{\mathrm{(k)}}>\sqrt{2}/2$ is sufficient but always not sharp.

	We next test the SIELM3 and SIELM4 schemes for the unconstrained INSE system \eqref{cont: unconstrained INSE-pressure Poisson}-\eqref{cont: unconstrained INSE-momentum} with $\nu=0.005$ and the zero-valued external force $\myvec{f}=\myvec{0}$. The reference solution is computed using the SIELM4 scheme ($\gamma=9$) on a refined space mesh with $N_x=N_y=192$. Fig.~\ref{fig:vorticity} displays the energy evolution for the SIELM3 and SIELM4 schemes with $\tau=0.0005$. As predicted in Theorem \ref{thm: NS multistep stability}, the solutions generated by the two certified SIELM schemes are stable and agree well with the reference solution. Also, the SIELM3 scheme with $\gamma=3$ and the SIELM4 scheme with
	$\gamma=6$ remain stable in our tests, although these parameter values are smaller than the theoretically certified value.
	
	The qualitative behaviors of numerical solutions, see the vorticity contours at $T=0.01$, $T=3$ and $T=5$, are depicted in Fig.~\ref{fig:vorticity}. As seen, the vortex structures produced by the SIELM schemes agree well with the reference solution, confirming that the stable energy behavior is accompanied by physically meaningful flow patterns.

	
	\begin{figure}[htbp]
		\centering		
		\subfigure[SIELM-4 $(\gamma=8.3)$]{\includegraphics[width=0.32\textwidth]{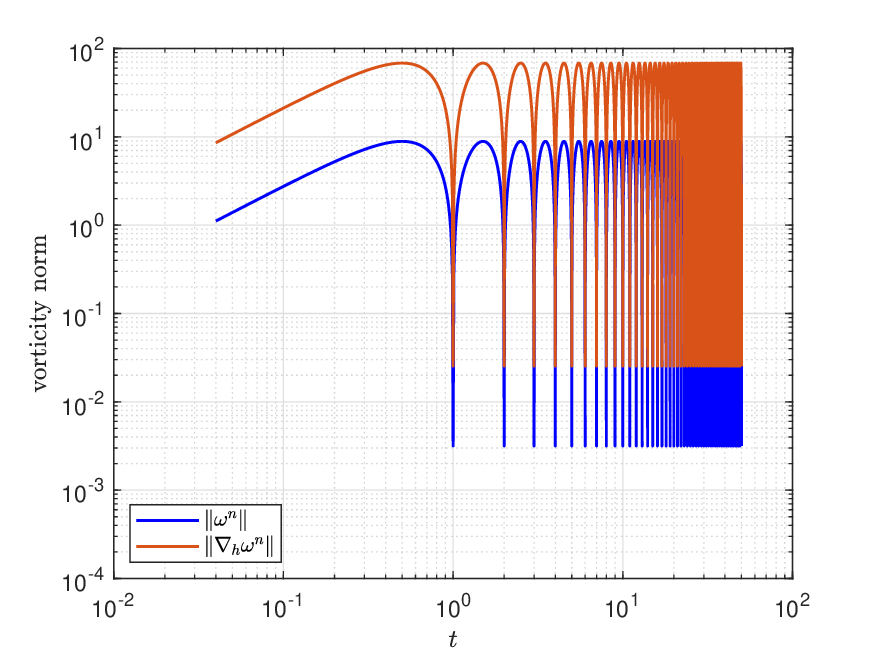}}
		\subfigure[SIELM-5 $(\gamma=10.9)$]{\includegraphics[width=0.32\textwidth]{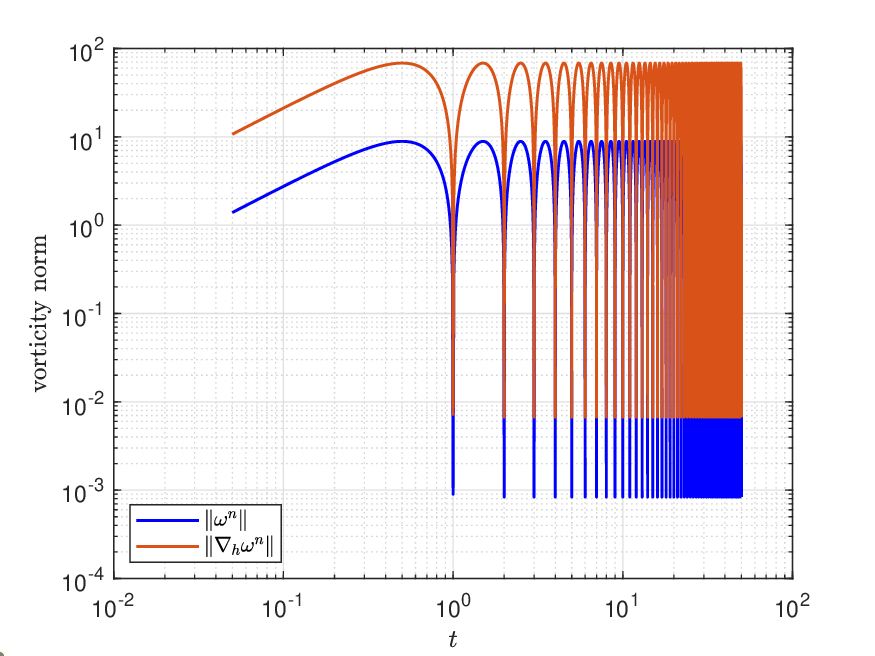}}
		\subfigure[SIELM-6 $(\gamma=13.6)$]{\includegraphics[width=0.32\textwidth]{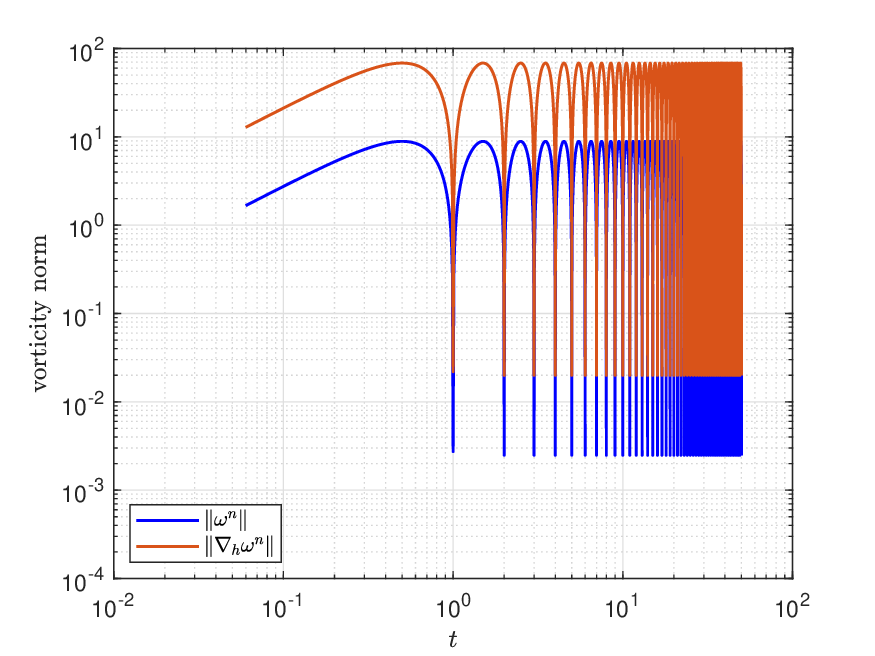}}\\
		\subfigure[SIELM-7 $(\gamma=16.2)$]{\includegraphics[width=0.32\textwidth]{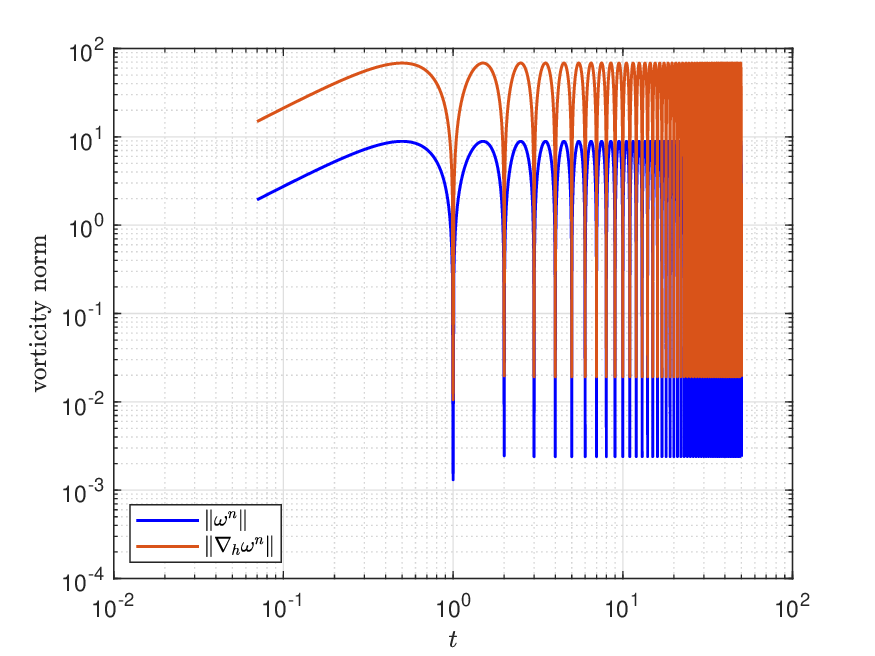}}
		\subfigure[SIELM-8 $(\gamma=18.9)$]{\includegraphics[width=0.32\textwidth]{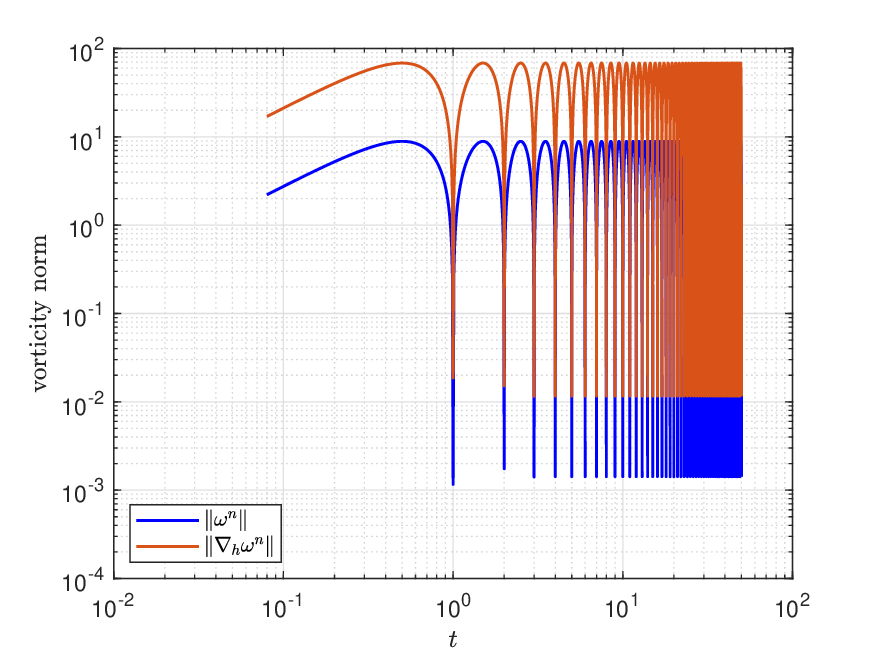}}
		\subfigure[SIELM-9 $(\gamma=21.6)$]{\includegraphics[width=0.32\textwidth]{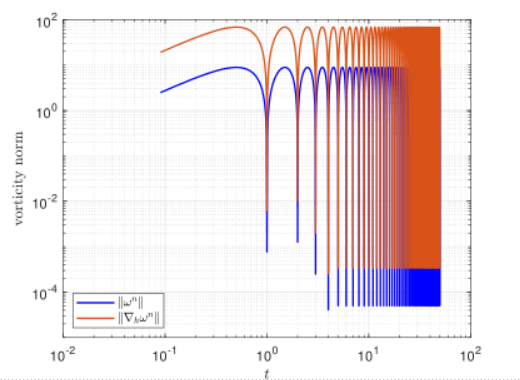}}
		\caption{The vorticity in the discrete $L^2$ and $H^1$ norms for the SIELM-$\rmk$ schemes.}\label{fig:vorticity_norm_SIELM}
	\end{figure}
	
	\begin{example}\cite[Example 2]{HuangShen:2025mcom}
		Consider the  INSE model \eqref{cont: INSE-momentum}-\eqref{cont: INSE-incompressible} on the domain $\Omega=(-1,1)^2$ with no-slip boundary condition, which admits the exact solution
		$$\myvec{u}=\bra{\sin2\pi y\sin^2\pi x\sin \pi t,-\sin2\pi x\sin^2\pi y\sin\pi t},$$
		and $p=\cos\pi x\sin\pi y\sin \pi t$. We set the viscosity coefficient $\nu=1$ and use the Legendre-Galerkin  method with $Nx=Ny=64$ modes in space so that the spatial discretization error is negligible compared with the time discretization error. \end{example}

	We now verify the temporal convergence rates of the GBDF-$\rmk$ ($2\le \rmk\le5$) and SIELM-$\rmk$ ($4\le \rmk\le9$) schemes. Always, we choose two different parameters $\beta$ (or $\gamma$) for each method, including a theoretically certified value from the stability condition $\mathfrak I_{\mathrm{IE},\mathrm{S}}^{(k)}>\sqrt{2}/2$ and a smaller one for examining the practical effect of the parameters on the time accuracy. More precisely, for the GBDF-$\rmk$ schemes, we test GBDF2 with $\beta=1,3$, GBDF3 with $\beta=4,5.5$, GBDF4 with $\beta=8.5,9$, and GBDF5 with $\beta=18,20$. For the SIELM-$\rmk$ schemes, \lan{we test SIELM4 with $\gamma=6,8.3$, SIELM5 with $\gamma=7,10.9$, SIELM6 with $\gamma=9,13.6$, SIELM7 with $\gamma=11,16.2$, SIELM8 with $\gamma=14,18.9$ and SIELM9 with $\gamma=16,21.6$.}  In each pair, the larger parameter is certified by our stability condition, while the smaller one is always not covered by the current theory.

	Fig.~\ref{fig:conv_GBDF} shows the log-log plots of the $L^{2}$ norm errors of the velocity and pressure, and the $L^{2}$ norm $\|\nabla\cdot\myvec{u}_N\|$ of discrete divergence at the final time $T=1$ for the GBDF-$\rmk$ schemes with $2\le \rmk\le5$ against the varied time steps $\tau=10^{-2}/2^m$ $(0\le m\le 3)$. For all tested values of $\beta$, the GBDF-$\rmk$ schemes generate approximately the $\rmk$-th order convergence in time, as predicted by Theorem \ref{thm: NS multistep convergence}. It seems that the GBDF-$\rmk$ schemes with the smaller parameter can retain the desired temporal accuracy if they practically maintain the numerical stability.
	Fig.~\ref{fig:conv_SIELM} presents the corresponding results for the SIELM-$\rmk$ schemes with $4\le\rmk\le9$. Again, for all tested values of $\gamma$, the observed slopes match the designed temporal accuracy $O(\tau^\rmk)$ for the SIELM-$\rmk$ schemes. As seen, the smaller choices of $\gamma$ also retain the expected convergence rates in these tests provided that the computation remains stable.
	
	We observe that, for the same index $\rmk$, a smaller value of $\beta$ or $\gamma$ usually leads to smaller $L^2$ errors for both the velocity and the pressure. They are consistent with the fact that a larger parameter improves the stability but always introduces a larger value of truncation error, cf. \cite[Sections SM2 and SM3]{LiaoQuanTangZhou:IMES}. Therefore, a moderately smaller $\beta$ or $\gamma$ would be practically desirable in numerical simulations; however, up to now, we are not able to improve the stability condition $\mathfrak I_{\mathrm{IE}}^{(k)}>\sqrt{2}/2$.

\lan{We further examine the long-time behavior of the SIELM-$\rmk$ schemes by monitoring the vorticity profile. For the exact solution in the above example, the corresponding vorticity is
\begin{equation*}
  \omega=\partial_x \myvec{u}_2-\partial_y \myvec{u}_1
  =-2\pi\left[\cos(2\pi x)\sin^2(\pi y)+\cos(2\pi y)\sin^2(\pi x)\right]\sin(\pi t). 
\end{equation*}
Therefore, $\omega(\cdot,t)=0$ and $\nabla\omega(\cdot,t)=\myvec{0}$ whenever $t\in\mathbb{N}$. This provides a useful benchmark for the long-time simulation. On the one hand, the long-time boundedness of the discrete vorticity norms reflects the stability of the numerical scheme.  On
the other hand, since the exact vorticity vanishes at every integer time, the numerical vorticity should also exhibit pronounced decay near these time levels if the scheme accurately captures the temporal oscillation of the solution. We take the time step size $\tau=0.01$ and compute the numerical solution up to $T=50$. At each time level, we compute the discrete $L^2$ norm and $H^1$ semi-norm of the vorticity for the numerical solution $\myvec{u}_N^n$.  Fig.~\ref{fig:vorticity_norm_SIELM} shows the time histories of these two quantities for the SIELM-$\rmk$ schemes with $4\le\rmk\le9$.  The curves remain uniformly bounded over the long time interval $[0,50]$, which indicates the long-time stability of the SIELM schemes. }

\end{document}